\documentclass[12pt]{amsart}

\usepackage{bookmark}
\usepackage{enumerate}
\usepackage{tikz-cd}
\usepackage{comment}
\usepackage{amsrefs}
\usepackage{amsmath}
\usepackage{amssymb}
\usepackage{amsthm}
\usepackage{pgffor}

\newtheorem{theorem}{Theorem}[section]
\newtheorem{lemma}[theorem]{Lemma}
\newtheorem{corollary}[theorem]{Corollary}
\newtheorem{proposition}[theorem]{Proposition}
\newtheorem{fact}[theorem]{Fact}

\newtheorem{question}{Question}[section]

\theoremstyle{definition}

\newtheorem{definition}[theorem]{Definition}

\renewcommand{\phi}{\varphi}

\renewcommand{\epsilon}{\varepsilon}

\newcommand*\diff{\mathop{}\!\mathrm{d}}

\newcommand{\eps}{\varepsilon}
\foreach \x in {A,...,Z}{%
\expandafter\xdef\csname f\x\endcsname{\noexpand\ensuremath{\noexpand\mathfrak{\x}}}
}

\foreach \x in {A,...,Z}{%
\expandafter\xdef\csname c\x\endcsname{\noexpand\ensuremath{\noexpand\mathcal{\x}}}
}

\foreach \x in {A,...,Z}{%
\expandafter\xdef\csname b\x\endcsname{\noexpand\ensuremath{\noexpand\mathbb{\x}}}
}

\newcommand{\sm}{\setminus}

\newcommand{\sub}{\subseteq}

\newcommand{\wt}{\widetilde}

\title[Counting spaces of functions on separable compact lines]{Counting spaces of functions\\ on separable compact lines}
\author[M. Korpalski]{Maciej Korpalski}
\author[P. Koszmider]{Piotr Koszmider}
\author[W. Marciszewski]{Witold Marciszewski}
\address{Instytut Matematyczny, Uniwersytet Wroc\l awski, pl.\ Grunwaldzki 2/4, 50-384 Wroc\-\l aw, Poland}
\email{Maciej.Korpalski@math.uni.wroc.pl}
\address{Institute of Mathematics of the Polish Academy of Sciences, ul. \'Sniadeckich 8, 00-656 Warszawa, Poland}
\email{piotr.math@proton.me}
\address{Institute of Mathematics, University of Warsaw, Banacha 2, 02-097 Warszawa, Poland}
\email{W.Marciszewski@mimuw.edu.pl}
\date{}
\subjclass[2020]{Primary 03E35, 03E65, 03E75, 46B03, 46E15, 54F05}

\thanks{The second-named and the third-named authors were partially supported by the NCN (National Science Centre, Poland) research grant no.\ 2020/37/B/ST1/02613.}

\begin{document}

\begin{abstract}
We investigate the following general problem, closely related to the problem of isomorphic classification of Banach spaces $C(K)$ of continuous real-valued functions on a compact space $K$, equipped with the supremum norm:

Let $\mathcal{K}$ be a class of compact spaces. How many isomorphism types of Banach spaces $C(K)$  are there, for $K\in \mathcal{K}$? 

We prove that for any uncountable regular cardinal number $\kappa$, there exist exactly $2^\kappa$ isomorphism types of spaces $C(K)$ for compact spaces of weight $\kappa$.

We show that, for the class $\mathcal{L}_{\omega_1}$ of separable compact linearly ordered spaces of weight $\omega_1$, the answer to the above question depends on additional set-theoretic axioms. In particular, assuming the continuum hypothesis, there are $2^{\omega_1}$ isomorphism types of $C(L)$, for $L\in \mathcal{L}_{\omega_1}$, and assuming a certain axiom proposed by Baumgartner, there is only one type.
\end{abstract}

\maketitle

\section{Introduction} \label{sec:1 Introduction}

One of the most fundamental and natural challenges of mathematics is the problem of classifying objects within a given class. An important part of such a challenge is an attempt to determine how many isomorphism types of objects we have in that class. 

In this paper, we consider the following case of this general problem:

Let $\mathcal{K}$ be a class of compact spaces. How many isomorphism types of Banach spaces $C(K)$ of real-valued continuous functions on $K$, equipped with the supremum norm, do we have for $K\in \mathcal{K}$? 

Recall that for an infinite compact space $K$, the weight of $K$ is equal to the weight and the density of $C(K)$. Hence, if the Banach spaces $C(K)$ and $C(L)$ are isomorphic, then the compact spaces $K$ and $L$ must have equal weights. It is also known that compact spaces $K$ and $L$ with isomorphic function spaces have equal cardinalities \cite{Ce78}.
Therefore, it is natural that in the problem of counting isomorphism types of spaces $C(K)$, we restrict ourselves to the classes of compacta $K$ of fixed weight or fixed cardinality.

The classical results of Miljutin, Bessaga and Pełczyński give us a complete isomorphic classification of spaces $C(K)$ for metrizable compacta $K$, i.e., for compacta of countable weight. 
In particular, Bessaga and Pełczyński (cf.\ \cite{Se71}) proved that there are exactly $\omega_1$ isomorphism types of $C(K)$, for countable (hence metrizable) $K$, and Miljutin (cf.\ \cite{Se71}) showed that, for uncountable metrizable $K$, there is only one isomorphic type of $C(K)$ space.
Summarizing, we have $\omega_1$ isomorphism types of spaces $C(K)$ for compacta $K$ of weight $\omega$ (regardless of the size of the continuum  $2^{\omega}$). 
Recall that we have $2^{\omega}$ topological types of compacta of weight $\omega$ \cite{MS20}.

For the classes of nonmetrizable compacta, the chances of obtaining such complete classification results seem to be very slim, for a simple reason - we usually have too many isomorphism types of spaces $C(K)$. 
It is known that even if we restrict ourselves to some concrete classes of compact spaces of weight $2^\omega$, we still have $2^{2^\omega}$ isomorphism types of $C(K)$.
In particular, it is true for the class of separable scattered compacta of height 3 (cf.\ \cite{CS+20}), or the class of separable, compact linearly ordered spaces (cf.\ \cite{Ma08}).

Note that, for a compact $K$ of weight $\kappa$, the cardinality of isomorphism types of $C(K)$ is bounded by $2^\kappa$ (cf.\ remarks in the last section).

In this paper, we concentrate on the case of compact spaces of weight $\omega_1$. We prove that there are $2^{\omega_1}$ isomorphism types of $C(K)$, for $K$ of such weight. Actually, we prove the following more general result:

\begin{theorem}\label{many}
Suppose that $\kappa$ is an uncountable regular cardinal.
There is a family of cardinality $2^\kappa$ of compact spaces $K$  of weight $\kappa$, such that the corresponding Banach spaces $C(K)$ are pairwise nonisomorphic. 
\end{theorem}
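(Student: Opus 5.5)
Since $\kappa$ is regular and uncountable, the plan is to encode each subset $S\subseteq\kappa$ of stationary-type data into a compact space $K_S$ of weight $\kappa$, so that $C(K_S)$ remembers $S$ modulo the club filter. Because there are $2^\kappa$ subsets of $\kappa\cap\mathrm{cof}(\omega)$ that are pairwise different modulo the nonstationary ideal (split $\kappa\cap\mathrm{cof}(\omega)$ into $\kappa$ disjoint stationary sets $S_\alpha$ by Solovay's theorem and take unions $S_A=\bigcup_{\alpha\in A}S_\alpha$ for $A\subseteq\kappa$), this gives $2^\kappa$ pairwise nonisomorphic spaces.

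\textbf{Construction.} For $S\subseteq\kappa\cap\mathrm{cof}(\omega)$, fix for each $\delta\in S$ a ladder $x_\delta\subseteq\delta$ of order type $\omega$ converging to $\delta$. Let $K_S$ be the compact space obtained from the ordinal space $[0,\kappa]$ by changing the topology at limits: a point $\delta$ of countable cofinality is a limit of all of $[0,\delta)$ if $\delta\notin S$, and we "double" $\delta$ when $\delta\in S$, so that $\delta$ is only reached through the ladder. A cleaner variant, which I would try first, is a scattered compact line or a subspace of $[0,\kappa]\times\{0,1\}$ (a split interval over $S$): take $L_S=([0,\kappa]\times\{0\})\cup(S\times\{1\})$ with the lexicographic order. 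Then $L_S$ is compact, linearly ordered, of weight $\kappa$, and the points of $S$ are exactly the places where the space has a "jump" of countable cofinality.

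\textbf{Invariant.} The main step is to extract from $C(K)$ an isomorphic invariant equal to $[S]$ modulo clubs. Given an isomorphism $T:C(L_S)\to C(L_{S'})$, I would use the standard elementary submodel / closing-off argument. For a continuous increasing chain $(M_\alpha)_{\alpha<\kappa}$ of elementary submodels of size $<\kappa$ containing $T$, $L_S$ and $L_{S'}$, the set $C=\{\alpha: M_\alpha\cap\kappa=\alpha\}$ is club. For $\alpha\in C$, $T$ maps the subspace $C(L_S)\cap \overline{M_\alpha}$, which corresponds to functions factoring through the quotient $L_S\to L_S^{(\alpha)}$ collapsing $[\alpha,\kappa]$, onto the analogous subspace for $L_{S'}$. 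One must then show that for $\alpha\in C$ of countable cofinality, $\alpha\in S$ iff the restriction-to-$\alpha$ subspace is complemented in the corresponding larger space (or the quotient $C(L_S)/C_\alpha$ has a specific property, e.g.\ the functional $\delta_{(\alpha,1)}-\delta_{(\alpha,0)}$ being a $w^*$-limit of functionals from $M_\alpha$ or not). This property is preserved by $T$, giving $S\cap C=S'\cap C$.

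\textbf{Main obstacle.} The hardest part is this last step: finding a Banach-space property of the pair (subspace determined by $M_\alpha$, whole space) that is isomorphism invariant and detects whether $\alpha\in S$. I would try a characterization via the weak$^*$ closure of the dual ball of $M_\alpha$: for $\alpha\notin S$, every functional in $C(L_S)^*$ restricted to the $M_\alpha$-part is a $w^*$-limit of a sequence from $M_\alpha$ (metrizable-type approachability at $\alpha$ via the ladder), whereas for $\alpha\in S$ the jump creates a functional with norm bounded away from such limits. This sequential approximation property transfers via $T^*$, up to constants controlled by $\|T\|\|T^{-1}\|$. Once that is settled, the counting is immediate.
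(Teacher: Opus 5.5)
There is a genuine gap, on two levels. First, the concrete space you say you would try first cannot encode $S$. Every $\delta\in S$ is a limit ordinal, so in $L_S=([0,\kappa]\times\{0\})\cup(S\times\{1\})$ the point $(\delta,1)$ has immediate predecessor $(\delta,0)$ and immediate successor $(\delta+1,0)$. It is therefore isolated. In fact $L_S$ is well ordered, has a largest element, and every proper initial segment has size $<\kappa$. Hence $L_S$ is order isomorphic, and so homeomorphic, to $[0,\kappa]$ for every $S$, and all the spaces $C(L_S)$ are isometric. Splitting points produces genuine jumps only over a dense order like $[0,1]$, not over an ordinal. Your other variant (``doubling'' $\delta\in S$) is not defined precisely enough to check even compactness and Hausdorffness.

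Second, and more seriously, the step that carries the whole theorem is not supplied: an isomorphic invariant that decides, on a club, whether $\alpha\in S$. You list candidates (complementation of the $M_\alpha$-part, or a sequential $w^*$-approximation property). You prove neither that they separate $\alpha\in S$ from $\alpha\notin S$ nor that they transfer under $T$. The second candidate looks doubtful as stated. At a doubled point both new Dirac measures are $w^*$-limits of Dirac measures at points below $\alpha$, and hence so is the ``jump'' functional between them. The counting via Solovay's theorem is fine and is exactly what the paper does, but it is the easy part.

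The missing idea in the paper is to choose spaces in which a sequence climbing to $\alpha\notin S$ has no limit at all, and to use injectivity of $T^*$ as the invariant. The paper takes $KL_S$, the one-point compactification of the ladder system space $X_S$. There, points of $\kappa\setminus S$ are isolated and $\alpha\in S$ is the limit of its ladder $L_\alpha$ only; also $C(KL_S)\simeq C_0(X_S)$, since the latter is a hyperplane. The key lemma is that if $S\setminus R$ is stationary, then no bounded operator $T:C_0(X_R)\to C_0(X_S)$ has dense range. The proof runs as follows.
\begin{itemize}
\item Supports of $T^*\delta_\alpha$ are countable, hence bounded by some $F(\alpha)<\kappa$.
\item Each point is charged by fewer than $\kappa$ of the measures $T^*\delta_\beta$; this is a short argument with the continuous functions $1_{\{\gamma\}}$ or $1_{L\cup\{\gamma\}}$. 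It gives a function $G$ such that $T^*\delta_\beta$ has no atoms below $\alpha$ whenever $\beta>G(\alpha)$.
\item For $\alpha\in S\setminus R$ closed under $F$ and $G$, choose ladder points $\beta_n\to\alpha$ in $X_S$ such that the measures $T^*\delta_{\beta_n}$ live on pairwise disjoint blocks cofinal in $\alpha$.
\item Since $\alpha$ is isolated in $X_R$, every $f\in C_0(X_R)$ is $\varepsilon$-small on some tail $[\beta,\alpha)$. Hence $T^*\delta_{\beta_n}\to 0$ weak$^*$.
\item But also $T^*\delta_{\beta_n}\to T^*\delta_\alpha$, so $T^*\delta_\alpha=0$ and $T$ does not have dense range.
\end{itemize}
This is the same closing-off philosophy you had in mind. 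The invariant, however, is simply injectivity of $T^*$, and no complementation or elementary-submodel analysis is needed.
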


Most of our investigations deal with the class of separable, compact linearly ordered spaces (in short: separable compact lines). 
Banach spaces $C(K)$ associated with such compacta $K$ play an important role in Banach space theory. 
They have provided many interesting examples (see  \cite{Ka99}, \cite{Ka02}, \cite{KMS01}, \cite{MS06}), and have been an object of investigations in numerous papers (see \cite{AK24}, \cite{CT15}, \cite{KK12}, \cite{KP24}, \cite{Ma08}, \cite{Mi20}, \cite{ST23}).

Let $\cL_{\kappa}$ be the class of separable, compact linearly ordered spaces of weight $\kappa$ (for our topological terminology see \cite{En89}).
We show that the cardinality of isomorphism types of $C(L)$, for $L\in \cL_{\omega_1}$, depends on additional set-theoretic axioms. 
In particular, assuming the continuum hypothesis (i.e., $2^{\omega} = \omega_1$), there are $2^{2^\omega}$ isomorphism types of $C(L)$, for $L\in \cL_{\omega_1}$. More precisely, we prove the following:

\begin{theorem} \label{thm: Exactly 2^omega1 spaces}
If $\kappa \le 2^\omega < 2^{\kappa}$, then there exist $2^{\kappa}$ pairwise nonisomorphic spaces $C(L)$ for $L\in \cL_{\kappa}$.
\end{theorem}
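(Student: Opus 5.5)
We would obtain the $2^\kappa$ pairwise nonisomorphic spaces by a counting argument rather than an explicit invariant. The plan is to produce $2^\kappa$ pairwise nonhomeomorphic lines $L\in\cL_\kappa$ and then show that each isomorphism class of $C(L)$, for $L\in\cL_\kappa$, contains at most $2^\omega$ of them. Since $2^\omega<2^\kappa$ and $2^\kappa$ has uncountable cofinality over such unions, a family of size $2^\kappa$ split into classes of size $\le 2^\omega$ must have $2^\kappa$ classes. More precisely, if there were only $\lambda<2^\kappa$ classes, the family would have size $\le\lambda\cdot 2^\omega<2^\kappa$.

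\textbf{Step 1: building the lines.} We fix a separable compact line and double points. For $A\subseteq[0,1]$ with $|A|=\kappa$ (possible since $\kappa\le 2^\omega$), let $L_A$ be the double arrow over $[0,1]$ in which exactly the points of $A$ are doubled. Then $L_A$ is compact, linearly ordered and separable (a countable dense set of undoubled rationals, adjusted as needed), and it has weight $|A|+\omega=\kappa$.

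To get $2^\kappa$ nonhomeomorphic ones, we fix one set $A_0$ of size $\kappa$ and encode subsets $S\subseteq A_0$ by the local structure at the points of $A_0$. One option is to replace the doubled point $a$ by a two-point gap when $a\in S$ and by an inserted copy of a fixed countable compact line (e.g. $\omega+1+\omega^*$-type) when $a\notin S$. The topological type of the local configuration at each point is then a homeomorphism invariant. A homeomorphism $L_S\to L_T$ need not fix $A_0$, so we only aim for the following: each $L_S$ is homeomorphic to at most $2^\omega$ of the $L_T$. This holds because a homeomorphism between separable compact lines is determined by its values on a countable dense set, so there are at most $(2^\omega)^\omega=2^\omega$ homeomorphisms out of $L_S$. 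Each such map sends the set of points of local type ``gap'' onto the corresponding set in $L_T$, and the image line determines $T$ (up to the fixed coding by points of $[0,1]$). Hence we get $2^\kappa$ homeomorphism types.

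\textbf{Step 2: isomorphism classes have size $\le 2^\omega$.} This is the main obstacle. If $C(L)\cong C(L')$ via $T$, then $T$ is determined by its values on a dense subset of $C(L)$ of size $\kappa$, so there are up to $(2^\omega)^\kappa=2^\kappa$ operators, which is useless. Instead we would use the separability of $L$: $C(L)^*$ is then weak$^*$-separable, and more to the point $L'$ embeds into the dual ball $(B_{C(L)^*},w^*)$ via $x\mapsto T^*\delta_x$ composed with a norming argument. We would argue that $L'$ is recovered up to homeomorphism from countably much data, namely the images $T^*\delta_{d_n}$ for a countable dense $D'\subseteq L'$. These are Radon measures on $L$, hence countable sequences of elements of a set of size $2^\omega$ (measures on a separable compact line are determined by their values on countably many intervals with endpoints in a countable dense set, modulo atoms at the $\le 2^\omega$ points). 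Then $L'$, as the closure of $\{\delta_{d}\}$ transported by $T^{*-1}$, is determined by this data together with $L$.

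So the number of $L'$ (up to homeomorphism) with $C(L')\cong C(L)$ is at most $(2^\omega)^\omega=2^\omega$. Concretely, $L'$ is homeomorphic to the $w^*$-closure of $\{\delta_d: d\in D'\}$ inside $C(L')^*$, and $T^*$ maps it homeomorphically onto the $w^*$-closure in $C(L)^*$ of the countable set $\{T^*\delta_d\}$. That closure is determined by the countable set, and there are at most $|C(L)^*|^\omega$ countable subsets. We need $|C(L)^*|\le 2^\omega$. This follows because $C(L)$ for separable compact $L$ of weight $\le 2^\omega$ has cardinality $2^\omega$ (continuous functions are determined on a countable dense set), and functionals are determined by their values on a countable $w^*$-determining family. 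If that fails, we would bound the measures directly using the fact that measures on $L$ push forward to measures on $[0,1]$ plus countably many atoms. Combined with Step 1 and the cardinality remark above, this gives $2^\kappa$ isomorphism types.
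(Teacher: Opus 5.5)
Your architecture is the paper's. It has three parts: a family of $2^\kappa$ pairwise nonhomeomorphic lines in $\cL_\kappa$ (Corollary \ref{cor: At least 2^omega1 spaces}); the fact that one isomorphism class of $C(L)$ meets at most $2^\omega$ homeomorphism types of separable compacta (Theorem \ref{thm: At most c isomorphic spaces}, used with $|M(K_A)|=2^\omega$); and cardinal arithmetic. Your Step 2 is exactly the argument behind Theorem \ref{thm: At most c isomorphic spaces}: $L'$ is homeomorphic to the weak$^*$-closure in $M(L)$ of the countable set $\{T^*\delta_d : d\in D'\}$.

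That step is correct, but only with your fallback bound on $|M(L)|$; the first justification fails. A countable $F\sub C(L)$ on which all functionals are determined has trivial annihilator, so its closed linear span is $C(L)$, and $L$ would be metrizable. Separability of $L$ alone gives nothing either: $\beta\mathbb{N}$ is separable but $|M(\beta\mathbb{N})|=2^{2^\omega}$. What is needed is the line structure. Let $\pi\colon K_A\to K$ be the projection. Every closed interval $[p,q]$ of $K_A$ differs from $\pi^{-1}[\pi(p),\pi(q)]$ by at most two points. Closed intervals form a $\pi$-system generating the Borel sets, by hereditary Lindel\"ofness. Hence $\mu$ is determined by $\pi_*\mu$ together with its countably supported atom function, which gives $|M(K_A)|\le 2^\omega$.

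Step 1, as written, contains a step that fails. Inserting a copy of $\omega+1+\omega^*$ at uncountably many points creates uncountably many isolated points, so $L_S$ is not separable. More generally, the countable chain condition forbids inserting anything with nonempty interior at uncountably many points, so no pointwise local encoding of $\kappa$ bits exists inside a separable compact line.

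The encoding is unnecessary, but the counting that replaces it is incomplete. ``A homeomorphism is determined by its values on a countable dense set'' presupposes a fixed target, whereas the topology of the target $L_T$ depends on $T$. Here is a repair:
\begin{enumerate}[(i)]
\item Use all $I_S$ with $S\sub(0,1)$ and $|S|=\kappa$; there are $(2^\omega)^\kappa=2^\kappa$ of them.
\item The set $D=(\bQ\cap[0,1])\times\{0\}$ is dense in every $I_S$.
\item If $h\colon I_S\to I_{S'}$ is a homeomorphism, then $\pi\circ h\colon I_S\to[0,1]$ is a continuous map into a fixed Hausdorff space, hence determined by $(\pi\circ h)|D$.
\item Moreover $S'=\{t : |(\pi\circ h)^{-1}(t)|=2\}$, since $h$ is a bijection onto $I_{S'}$. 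So at most $|[0,1]^D|=2^\omega$ sets $S'$ make $I_{S'}$ homeomorphic to $I_S$.
\end{enumerate}
Combined with Step 2, for each $L$ at most $2^\omega\cdot 2^\omega=2^\omega$ sets $S$ satisfy $C(I_S)\simeq C(L)$, and your cardinal arithmetic then finishes the proof. This projection argument is a more elementary substitute for the paper's appeal to van Douwen's observation and the count of subsets of $I$ homeomorphic to a given one.
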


On the other hand, assuming a certain axiom (\textsf{BA}) proposed by Baumgartner, there is only one type of $C(L)$ for $L\in \cL_{\omega_1}$:

\begin{theorem}[\textsf{BA}] \label{thm: Under BA there is one C(K_A) space}
If $K$ and $L$ are separable compact lines of weight $\omega_1$, then the Banach spaces $C(K)$ and $C(L)$ are isomorphic.
\end{theorem}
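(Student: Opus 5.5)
Under \textsf{BA} (every two $\omega_1$-dense sets of reals are order-isomorphic), the plan is to reduce every $L\in\cL_{\omega_1}$ to a canonical form and then prove that $C(L)$ does not depend on the choice within that form.

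\emph{Representation.} A separable compact line $L$ is determined, up to a harmless countable modification, by a subset of a metrizable compact line. Take a countable dense set, say $L$ maps onto $[0,1]$ by an order-preserving continuous map. Then $L$ is essentially the double-arrow-type space $K_A$ obtained from $[0,1]$ by doubling the points of some $A\subseteq[0,1]$. Each $x\in A$ is replaced by $x^-<x^+$, possibly together with a countable set of extra points that are isolated on one side. The weight of $L$ equals $|A|+\omega$, so $|A|=\omega_1$.

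\emph{Reduction to an $\omega_1$-dense set.} Split $A=A_0\cup A_1$. Here $A_0$ is the set of points of $A$ that are condensation points of $A$ from both sides in the order, and $A_1$ is the rest, which is countable since each of its points has a countable one-sided neighbourhood containing no other doubled points in some sense. I would show that doubling countably many points only adds a complemented $c_0$-like summand: $C(K_A)\simeq C(K_{A_0})\oplus Z$ with $Z$ separable. I would also show that such summands are absorbed, using a Pełczyński decomposition-type argument. The key fact for that is that $C(K_{A_0})$ contains complemented copies of $C(K_{A_0})\oplus C[0,1]$ and of $c_0$, so a separable $C(M)$ summand disappears by Miljutin's theorem. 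After shrinking the interval to where $A_0$ lives and gluing, $A_0$ can be made $\omega_1$-dense in a copy of $[0,1]$ (or a union of countably many intervals, handled by the same absorption).

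\emph{Applying \textsf{BA}.} Given two such $\omega_1$-dense sets $A,B\subseteq\bR$, \textsf{BA} provides an order isomorphism $f:A\to B$. It extends to an increasing homeomorphism of $\bR$, since $A$ and $B$ are dense, and that homeomorphism maps the doubled points of $A$ onto those of $B$. This induces a homeomorphism $K_A\cong K_B$, hence $C(K_A)\cong C(K_B)$.

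The main obstacle is the reduction step. One must verify that the non-$\omega_1$-dense part of $A$ and the countable extraneous structure of $L$ contribute only separable complemented pieces that can be absorbed. This requires a careful decomposition $C(L)\simeq C(K_{A_0})\oplus C(M)$ with $M$ metrizable, obtained via a retraction-like averaging operator across countably many intervals. It also requires showing $C(K_{A_0})\simeq C(K_{A_0})\oplus C(M)$. I would try first to prove the latter by finding inside $K_{A_0}$ a clopen piece homeomorphic to $K_{A_0}$ whose complement contains a copy of the Cantor set as a retract.
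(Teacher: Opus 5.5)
Your endgame matches the paper: \textsf{BA} gives an order isomorphism of $\omega_1$-dense sets, it extends to $[0,1]$, and it induces a homeomorphism $I_A\cong I_B$. Your absorption of separable $C(M)$-summands also matches. The reduction step, however, has a genuine gap.

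You pass from $K_A$ to $K_{A_0}$ by undoubling the points of $A_1$, that is, to a \emph{quotient} $\pi\colon K_A\to K_{A_0}$. You then propose to complement $C(K_{A_0})\circ\pi$ in $C(K_A)$ by an averaging operator. This fails in general.

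\emph{Counterexample.} Let $P\subseteq[0,1]$ be a Cantor set. Let $A_0\subseteq P$ be $\omega_1$-dense in $P$ and avoid the endpoints of the gaps of $P$. Let $A_1\subseteq[0,1]\setminus P$ be countable and dense in $[0,1]$. Your splitting of $A=A_0\cup A_1$ returns exactly these two sets.

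\emph{No averaging operator.} Suppose $u\colon C(K_A)\to C(K_{A_0})$ is bounded with $u(g\circ\pi)=g$, and put $\mu_y=u^*\delta_y$, so $\pi_*\mu_y=\delta_y$. For $a\in A_1$ the set $V_a=\{z\in K_A: z\le(a,0)\}$ is clopen, so $y\mapsto\mu_y(V_a)$ is continuous. Moreover $\mu_y(V_a)=1_{\{y<a\}}+\mu_y(\{(a,0)\})$ for $y\ne a$. Hence the one-sided limits of $y\mapsto\mu_y(\{(a,0)\})$ at $a$ differ by $1$. So this atom has modulus $>1/3$ on some one-sided interval $W_a$ at $a$; near $a$, $K_{A_0}$ is an ordinary real interval. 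Since $A_1$ is dense, the Baire category theorem gives a $y$ lying in infinitely many $W_a$, and then $\|\mu_y\|=\infty$, a contradiction.

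In this example $K_A$ is even zero-dimensional, while $K_{A_0}$ contains arcs. So no retraction onto a copy of $K_{A_0}$ exists either. Your route therefore does not establish $C(K_A)\simeq C(K_{A_0})\oplus Z$, although that isomorphism does hold a posteriori.

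\emph{What is missing.} You need the Heath--Lutzer extension operator: for \emph{every} closed $F\subseteq K_A$ one has $C(K_A)\simeq C(F)\oplus C_0(K_A\|F)$. So you should reduce only through closed subspaces, never through quotients.

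\emph{The paper's route.} After reducing to $K=I_A$, the paper proceeds as follows.
\begin{enumerate}
\item Let $U$ be the union of the rational intervals meeting $A$ in a countable set, $M=I\setminus U$, and $F=M_{A\setminus U}$.
\item $C_0(K_A\|F)$ is separable: composing with the natural surjection $I_A\to I_{A\cap U}$ maps a subspace of the separable space $C(I_{A\cap U})$ onto it. Hence it is isomorphic to some $C(N)$ with $N$ metrizable, and it is absorbed.
\item The points of $A\cap M$ that are condensation points from one side only are gap endpoints of $M$. They therefore yield isolated points of $F$.
\item These isolated points are discarded by passing to the perfect kernel $F^{(\alpha)}$, again a closed subspace with countable complement.
\item That kernel is order isomorphic to some $I_B$ with $B$ $\omega_1$-dense; this is your ``gluing''. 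Then \textsf{BA} finishes the proof.
\end{enumerate}

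Finally, your idea of a clopen piece of $K_{A_0}$ homeomorphic to $K_{A_0}$ is only available after $\omega_1$-density and \textsf{BA}. The paper instead obtains $C(2^\omega)\,|\,C(K_A)$ in ZFC, from a Pe{\l}czy\'nski-type averaging operator on a closed subspace of the form $C_B$.
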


From this theorem we will derive the following:

\begin{corollary}[\textsf{BA}] \label{cor: Under BA C(K_A) is iso with its square}
If $K$ is a separable compact line of weight $\omega_1$, then the Banach space $C(K)$ is isomorphic with the direct sum $C(K)\oplus C(K)$.
\end{corollary}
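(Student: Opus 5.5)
The plan is to derive the corollary from Theorem~\ref{thm: Under BA there is one C(K_A) space} by exhibiting a separable compact line $M$ of weight $\omega_1$ with $C(M)\cong C(K)\oplus C(K)$. Then that theorem, which holds under \textsf{BA}, gives $C(M)\cong C(K)$, and the corollary follows at once.

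For the construction, let $M = K\oplus K$ be the ordered sum: a copy of $K$ followed by a second copy of $K$, with the order topology. Since $K$ is compact, it has a maximum $\max K$ and a minimum $\min K$. In $M$ the point $\max$ of the first copy is the immediate predecessor of $\min$ of the second copy. Two things follow from this jump:
\begin{itemize}
\item the order topology on $M$ restricts to the original topology on each copy;
\item each copy is clopen in $M$, namely the initial segment $(\leftarrow, \max K_1]$ and the final segment $[\min K_2, \rightarrow)$.
\end{itemize}
Hence $M$ is homeomorphic to the topological disjoint union of two copies of $K$, and so $C(M)\cong C(K)\oplus C(K)$, in fact isometrically with the max norm.

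It remains to check the hypotheses of Theorem~\ref{thm: Under BA there is one C(K_A) space} for $M$:
\begin{itemize}
\item $M$ is a compact line, since a linear order whose order topology is compact is exactly a complete order with endpoints, and completeness is preserved by the ordered sum because the gap at the junction is a jump.
\item $M$ is separable, since the union of two countable dense sets, one in each copy, is dense.
\item The weight of $M$ equals the weight of $K$, which is $\omega_1$: a finite disjoint union does not change the weight of an infinite space.
\end{itemize}

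There is no serious obstacle here. The only point needing care is that the ordered sum is genuinely a topological disjoint union, which rests on the jump $\max K_1 < \min K_2$ making both pieces clopen. Granting this, applying Theorem~\ref{thm: Under BA there is one C(K_A) space} to $K$ and $M$ yields $C(K)\cong C(M)\cong C(K)\oplus C(K)$.
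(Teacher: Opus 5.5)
Your proposal is correct and follows essentially the same route as the paper: identify $C(K)\oplus C(K)$ with $C(K\sqcup K)$, note that $K\sqcup K$ is (homeomorphic to) a separable compact line of weight $\omega_1$, and apply Theorem~\ref{thm: Under BA there is one C(K_A) space}. Your explicit realization of $K\sqcup K$ as the ordered sum, with both copies clopen, just spells out the detail the paper leaves implicit.
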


The above corollary should be compared with a recent result of Kucharski \cite{Ku26} who proved that there exists a separable compact line $K$ of weight $2^\omega$ such that the Banach space $C(K)$ is not isomorphic to $C(K)\oplus C(K)$. In particular, this shows that Corollary \ref{cor: Under BA C(K_A) is iso with its square} requires some additional set-theoretic hypothesis.

Let us recall that another example of a class $\mathcal{K}$ of compact spaces of weight $\omega_1$, such that the cardinality of isomorphism types of $C(K)$, for $K\in \mathcal{K}$, depends on additional set-theoretic axioms, was given in \cite{CS+20} ($\mathcal{K}$ is the class of separable scattered compacta of height 3).
\medskip

The paper is organized as follows:

In Section \ref{prelim} we explain our terminology, recall some definitions and auxiliary facts.

Section \ref{sec_counting_C(K)} contains the proof of Theorem \ref{many} and its prerequisites. 
In Section \ref{sec_top_properties} we present some general topological properties of separable compact lines, in particular counting their homeomorphic types and recalling some handy characterizations.

The main result of Section \ref{sec_properties_C(K_A)}, dedicated to spaces of continuous functions on separable compact lines, is Lemma \ref{lem: C(KA) and C(KA) x C(M) are isomorphic}, which states that $C(K)$ is isomorphic to $C(K) \oplus C(M)$ for any uncountable separable compact line $K$ and any metrizable compact space $M$. 
Sections \ref{sec_counting_under_2^k>2^omega} and \ref{sec_BA} are dedicated to proving Theorems \ref{thm: Exactly 2^omega1 spaces} and \ref{thm: Under BA there is one C(K_A) space}, respectively.

In Section \ref{sec_products}, we turn our attention to spaces of continuous functions on finite products of separable compact lines. Using the results of Michalak \cite{Mi20} and assuming the Baumgartner’s Axiom, we provide a complete isomorphic classification of such $C(K)$ spaces of weight $\omega_1$, see Theorem \ref{thm_isomorphic_classes_of_finite_products}.

The last section, Section \ref{sec_remarks}, is dedicated to some scattered thoughts and remarks about the subject of the paper.

\section{Preliminaries}\label{prelim}

\subsection{Set-theoretic terminology}

We denote by $cf(\alpha)$ the cofinality of the ordinal $\alpha$. 
Let $\kappa$ be a regular cardinal, i.e., $cf(\kappa) = \kappa$. Recall that a set $A\sub \kappa$ is a club set if it is unbounded in $\kappa$ and closed in the order topology. 
A set $S \sub \kappa$ is stationary if it has a nonempty intersection with all club subsets of $\kappa$.
By $\omega_1$ we denote the first uncountable cardinal and usually write $2^\omega$ for the continuum, the cardinality of the set of real numbers. In general, our set-theoretic terminology follows Jech's book \cite{TJ03}.

\subsection{Banach spaces of continuous functions and their duals}
Each compact and locally compact space we consider is Hausdorff. 
For a locally compact space $X$ we denote by $C_0(X)$ the space of real-valued continuous functions on $X$ vanishing at infinity, considered with the supremum norm. 
For a compact space $K$, we write $C(K)$ instead of $C_0(K)$, as it is the Banach space of all real-valued continuous functions on $K$.
It is a classical fact that the density and weight of $C(K)$ is equal to the weight of $K$.
For a set $A \sub K$, we denote the characteristic function of $A$ by $1_A$.

The dual space $C(K)^*$, due to the Riesz representation theorem, can be seen as $M(K)$, the Banach space of signed Radon measures on $K$ with absolute variation as the norm.
There is a similar representation of $C_0(X)^*$ for any locally compact space $X$, where every element of $C_0(X)^*$ can be represented as the integration with respect to a signed Radon measure on the one-point compactification $\alpha X$ of $X$ that vanishes at the added point (see Theorem 18.4.1 of \cite{Se71}). 

In the space $M(K)$, equipped with the weak* topology induced by $C(K)$, there is a linearly dense topological copy of $K$, which is the set of Dirac delta measures $\Delta_K = \{\delta_x : x\in K\}$ (we will sometimes identify $K$ and $\Delta_K$). 
This, in particular, means that if the space $K$ is separable, then $M(K)$ is also separable in the weak* topology.
We write $supp(\mu)$ for the support of the measure $\mu \in M(K)$.

For two Banach spaces $X, Y$ we write $X \simeq Y$ if $X$ and $Y$ are isomorphic.
By $X|Y$ we mean that $Y$ contains a complemented isomorphic copy of $X$, i.e., there is a Banach space $X'$ such that $Y \simeq X \oplus X'$.

For a countable family of Banach spaces $\{X_n : n\in \omega\}$,  their $c_0$-sum is the Banach space consisting of all sequences $(x_n)_{n\in \omega} \in \prod_{n\in \omega} X_n$ such that for every $\epsilon > 0$, the set $\{n\in \omega \colon \|x_n\| > \epsilon\}$ is finite, equipped with the supremum norm $\|(x_n)_{n\in \omega}\| = \sup_{n\in \omega} \|x_n\|$. 

The following fact is standard, see e.g. \cite[Proposition 21.5.8]{Se71}. Note that all hyperplanes of a Banach space are mutually isomorphic, see \cite[Exercise 2.9]{Fa+11}.

\begin{fact} \label{pre: C(K) iso to its hyperplanes}
Let $K$ be a compact space with a nontrivial convergent sequence. Then the Banach space $C(K)$ is isomorphic to its hyperplanes.
\end{fact}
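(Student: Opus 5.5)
The plan is to reduce the statement to $C(K)\simeq C(K)\oplus\mathbb{R}$. Every hyperplane of $C(K)$ is isomorphic to $C(K)\oplus\mathbb{R}$'s hyperplane counterpart, and all hyperplanes of a Banach space are mutually isomorphic \cite[Exercise 2.9]{Fa+11}. So it suffices to exhibit one hyperplane $H$ with $H\simeq C(K)$. Then every hyperplane $Y$ satisfies $Y\simeq H\simeq C(K)$.

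Fix a nontrivial convergent sequence $x_n\to x$ in $K$ with the $x_n$ pairwise distinct and different from $x$. First, pass to a subsequence and use Hausdorffness to choose pairwise disjoint open sets $U_n\ni x_n$ with $x\notin\overline{U_n}$. We can also arrange that the $U_n$ shrink to $x$, meaning every neighbourhood of $x$ contains all but finitely many $U_n$. To get this, use a neighbourhood base argument along the sequence: having chosen $U_0,\dots,U_{n-1}$, pick $U_n$ inside the complement of $\overline{U_0}\cup\dots\cup\overline{U_{n-1}}\cup\{x\}$-closure-separated region. By Urysohn, take $f_n\colon K\to[0,1]$ with $f_n(x_n)=1$ and support in $U_n$.

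Next, consider the closed subspace $E=\{f\in C(K): f(x)=0\}$, which is a hyperplane, and the closed subspace $c_0$-like set $Z=\overline{\operatorname{span}}\{f_n\}$. For $(a_n)\in c_0$, the sum $\sum a_nf_n$ converges uniformly, because the supports are disjoint and the $U_n$ accumulate only at $x$. It is continuous at $x$ with value $0$, so $Z\simeq c_0$ isometrically and $Z\subseteq E$. The map $P f=\sum_n f(x_n)f_n$ is a norm-one projection of $E$ onto $Z$, since $f(x_n)\to f(x)=0$. Hence $E\simeq Z\oplus W\simeq c_0\oplus W$ with $W=\ker P\cap E$. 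Similarly, $C(K)\simeq E\oplus\mathbb{R}\simeq c_0\oplus W\oplus\mathbb{R}$. Since $c_0\oplus\mathbb{R}\simeq c_0$ via a shift, we get $C(K)\simeq c_0\oplus W\simeq E$. So $E$ is a hyperplane isomorphic to $C(K)$, and every hyperplane is then isomorphic to $C(K)$.

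The main obstacle is constructing the disjoint neighbourhoods $U_n$ so that $\sum a_nf_n$ is continuous at $x$. Without the shrinking property, the functions could fail to converge to $0$ near $x$. I would handle this by choosing $U_n$ inside an open set $V_n\ni x_n$ where $\overline{V_n}\not\ni x$ and the $V_n$ are pairwise disjoint, which is possible after thinning the sequence. Continuity at $x$ then follows from $|a_n|\to0$. On any neighbourhood $O$ of $x$, only finitely many $f_n$ fail to vanish outside $O$, and the finitely many remaining coefficients can be controlled on a smaller neighbourhood that avoids $\overline{U_0}\cup\dots\cup\overline{U_N}$.
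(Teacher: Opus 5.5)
Your main line is correct. It is the standard argument behind this fact, which the paper does not prove but cites from Semadeni's book, together with the mutual isomorphism of hyperplanes. You produce a complemented copy of $c_0$ inside the hyperplane $E=\{f\in C(K): f(x)=0\}$ using disjointly supported bumps $f_n$ and the norm-one projection $f\mapsto\sum_n f(x_n)f_n$. That copy then absorbs the one-dimensional summand via $c_0\oplus\mathbb{R}\simeq c_0$.

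One claim is false and must be deleted. You cannot in general arrange that every neighbourhood of $x$ contains all but finitely many $U_n$. Equally, you cannot arrange that only finitely many $f_n$ fail to vanish outside a given neighbourhood of $x$.

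For example, take $K=(\omega+1)\times(\omega_1+1)$ with $x_n=(n,\omega_1)\to x=(\omega,\omega_1)$. Any open $U_n\ni x_n$ contains $\{n\}\times(\alpha_n,\omega_1]$ for some $\alpha_n<\omega_1$. Put $\alpha=\sup_n\alpha_n<\omega_1$. Then the neighbourhood $(\omega+1)\times(\alpha+1,\omega_1]$ of $x$ contains no $U_n$ at all, since $(n,\alpha+1)\in U_n$ lies outside it.

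Fortunately this property is never needed. Each $f_n$ vanishes outside $U_n$ and the $U_n$ are pairwise disjoint, so
\[\Bigl\|\sum_{n\ge N}a_nf_n\Bigr\|_\infty\le\sup_{n\ge N}|a_n|.\]
Hence for $(a_n)\in c_0$ the partial sums converge uniformly, and the limit is continuous at every point of $K$, not only at $x$. It vanishes at $x$ because $x\notin U_n$ for every $n$. This is exactly what your final estimate does: choose $N$ with $|a_n|<\varepsilon$ for $n>N$, then a neighbourhood of $x$ missing $\overline{U_0}\cup\dots\cup\overline{U_N}$. That estimate uses no shrinking.

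With that sentence removed, the rest goes through:
\begin{itemize}
\item the inductive choice of the $U_n$ after thinning the sequence;
\item the isometry of $c_0$ onto $Z$;
\item $P$ fixes each $f_m$, because $f_m(x_m)=1$ and $f_m(x_n)=0$ for $n\neq m$;
\item the chain $C(K)\simeq E\oplus\mathbb{R}\simeq c_0\oplus W\oplus\mathbb{R}\simeq c_0\oplus W\simeq E$.
\end{itemize}
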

In general, our terminology concerning Banach spaces follows books \cite{Fa+11} and \cite{Se71}.

\subsection{Scattered spaces}

Recall that a topological space $F$ is scattered if every nonempty set $A \sub F$ contains a relative isolated point.

For a scattered space $X$, by the Cantor-Ben\-dixson height (or simply the height) $ht(X)$ of $X$ we mean the least ordinal $\alpha$ such that the Cantor-Bendixson derivative $X^{(\alpha)}$ of the space $X$ is empty. 
The height of a compact scattered space is always a nonlimit ordinal.

Let us make a general remark about the bounded linear functionals on $C_0(X)$ for any locally compact scattered space $X$.
By the Rudin theorem (Theorem 19.7.6 of \cite{Se71}), if $\alpha X$ is scattered, then all Radon measures on the one-point compactification $\alpha X$ are purely atomic. 
From the Riesz representation theorem mentioned above it follows that any bounded linear functional $\phi$ on $C_0(X)$ is defined by
\[\phi(f)=\sum_{n\in \omega}a_n f(x_n),\]
where $\sum_{n\in \omega}|a_n|<\infty$ and $\{x_n: n\in \omega\} \sub X$ are distinct, and $f\in C_0(X)$.

\subsection{Separable compact linearly ordered spaces}\label{subs: sep cpt lines}

Consider an arbitrary closed subset $K$ of the unit interval $I = [0, 1]$ and any subset $A \sub K$. Denote
\[K_A = \big(K \times \{0\}\big) \cup \big( A \times \{1\}\big)\]
and consider this set with the order topology given by the lexicographic order on $[0,1] \times \{0,1\}$.
If $A = (0, 1)$ and $K = I$, then the space $K_A$ is the well-known double arrow space. Some authors use this name for the space
$I_I$; others call the space $I_I$ the split interval. 
For any sets $A, K$ as above, the space $K_A$ is a separable compact line of topological weight $|A|$ (if the set $A$ is infinite), and thus it is nonmetrizable whenever the set $A$ is uncountable.
If $A$ is dense in $K$, then the space $K_A$ is zero-dimensional.
It turns out that spaces of the form $K_A$ exhaust the class of separable compact lines:

\begin{theorem}[Ostaszewski, \cite{Os74}] \label{int:1}
The space $L$ is a separable compact linearly ordered space if and only if $L$ is order isomorphic (hence homeomorphic) to the space $K_A$ for some closed set $K \sub [0, 1]$ and a subset $A \sub K$.
\end{theorem}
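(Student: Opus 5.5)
Both directions are order-theoretic, so I will work with the linear order throughout and only use that order-isomorphisms between linearly ordered spaces are homeomorphisms of their order topologies.

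\emph{The ``if'' direction.} I take the order on $K_A$ restricted from the lexicographic order on $[0,1]\times\{0,1\}$ and check three things. First, completeness: every subset of $K_A$ has a supremum. Given $S\subseteq K_A$, let $s=\sup \pi[S]$ where $\pi$ is the first-coordinate projection. Then $s\in K$ because $K$ is closed, and the supremum of $S$ is either $(s,0)$ or $(s,1)$, the latter only when $(s,1)\in S$. Together with the existence of endpoints, this gives compactness of the order topology. Second, separability: take a countable set $D\subseteq K$ dense in $K$, add the countably many points $x$ for which $(x,0),(x,1)$ is a jump, i.e.\ $x\in A$ with $x$ two-sided non-isolated failing etc., and also add the one-sided isolated points of $K$ (countable since gaps of $K$ are countably many). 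Then $D\times\{0,1\}\cap K_A$ together with these is dense. The only subtle point is that the points $(a,0),(a,1)$ for $a\in A$ form a jump, which produces no open interval to hit.

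\emph{The ``only if'' direction (main obstacle).} Let $L$ be a separable compact line with a countable dense set $D$. I would build an order-preserving map $f\colon L\to[0,1]$ as follows.
\begin{enumerate}
\item Enumerate $D$ together with the countably many points involved in jumps. Jumps are pairs $a<b$ with $(a,b)=\emptyset$. The family of jumps is countable when restricted to those with at least one endpoint in $D$, and I need to handle the general case carefully.
\item Choose weights $w_d>0$ with $\sum_d w_d\le1$, and set $f(x)=\sum_{d\le x}w_d$ (normalized).
\end{enumerate}
This $f$ is weakly increasing. For $x<y$ with $(x,y)\neq\emptyset$, the interval $(x,y]$ contains a point of $D$, because a nonempty open interval contains a point of $D$; hence $f(x)<f(y)$. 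So $f$ identifies only the endpoints of jumps. Each fiber has at most two points, and any two-point fiber is a jump $a<b$.

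Set $K=f[L]$. Compactness of $L$ and monotonicity give that $K$ is closed. I would verify this via sup/inf preservation, using order-completeness and the fact that $f$ is continuous from left/right suitably. Let $A=\{f(a): \text{$a<b$ is a jump with } f(a)=f(b)\}$. Then $x\mapsto (f(x),0)$ for the lower or only point of a fiber, and $b\mapsto(f(b),1)$ for the upper endpoint, is an order isomorphism $L\to K_A$.

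The hardest step is making $f$ well defined and bounded while ensuring strictness. The key check is that nonempty open intervals meet $D$, which holds because open intervals are open in the order topology. No jump requirement is needed, since collapsing jumps is exactly what $A$ records. Finally, order isomorphisms of linearly ordered spaces are homeomorphisms, which finishes the proof.
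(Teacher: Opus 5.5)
The paper only quotes this theorem from Ostaszewski and gives no proof, so your argument has to stand on its own. The ``only if'' direction has a genuine gap at the step ``compactness of $L$ and monotonicity give that $K$ is closed'': this is false for your $f(x)=\sum_{d\in D,\,d\le x}w_d$.

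Your $f$ is right-continuous, but it jumps from the left at every $d\in D$ without an immediate predecessor. There $\sup_{y<d}f(y)=\sum_{e\in D,\,e<d}w_e=f(d)-w_d$, and this value lies in $\overline{f[L]}$ but not in $f[L]$:
\begin{itemize}
\item for $y<d$, the nonempty open interval $(y,d)$ meets $D$, so $f(y)<f(d)-w_d$;
\item for $y\ge d$, we have $f(y)\ge f(d)$.
\end{itemize}
For instance, with $L=[0,1]$ and $D=\mathbb{Q}\cap[0,1]$, the set $f[L]$ misses $f(q)-w_q$ for every rational $q>0$, whatever the weights are. So choosing $D$ or the $w_d$ more cleverly does not help. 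A map built purely from point masses sitting at left-limit points cannot have closed image; compactness passes to the image only along a continuous map.

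A minor point in the ``if'' direction: the jumps $(a,0)<(a,1)$, $a\in A$, need not be countably many. As you observe afterwards, they never need to be hit, so that sentence should simply be dropped.

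\textbf{The fix.} You need a continuous nondecreasing $\phi\colon L\to[0,1]$ with $\phi(x)<\phi(y)$ whenever $x<y$ and $(x,y)\neq\emptyset$. Then $K=\phi[L]$ is compact, hence closed. The rest of your argument then goes through verbatim: fibres have at most two points, two-point fibres are jumps, $A$ is the set of values with two-point fibres, and you get the order isomorphism onto $K_A$.

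One can take $\phi=\sum_n 2^{-n}\psi_n$ for countably many continuous nondecreasing $\psi_n\colon L\to[0,1]$, chosen as follows.
\begin{itemize}
\item For $d<e$ in $D$ such that $[d,e]$ contains a jump $a<b$, let $\psi_{d,e}=1_{[b,\max L]}$. This set equals $(a,\max L]$, so it is clopen.
\item For $d<e$ in $D$ such that $[d,e]$ contains no jump, $[d,e]$ is densely ordered, complete, and has the countable order-dense subset $D\cap(d,e)$. By Cantor's theorem it is order isomorphic to $[0,1]$. Let $\psi_{d,e}$ be this isomorphism, extended by $0$ below $d$ and by $1$ above $e$.
\item For each of the countably many isolated points $d$, add $1_{[d,\max L]}$.
\end{itemize}
Now suppose $(x,y)\neq\emptyset$. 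Either it contains two points $d<e$ of $D$, and then $\psi_{d,e}(x)=0<1=\psi_{d,e}(y)$. Or it contains only one point of $D$, in which case $(x,y)=\{d\}$ for an isolated $d$, and $1_{[d,\max L]}$ separates $x$ from $y$.
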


Due to Ostaszewski's result, in this paper we may assume that any separable compact line is of the form $K_A$.

\section{Counting isomorphism types of \texorpdfstring{$C(K)$}{C(K)} of a given weight} \label{sec_counting_C(K)}

The main result of this Section is Theorem \ref{many}, in which we count isomorphism types of Banach spaces of continuous functions on compact spaces of regular weight $\kappa$. 
Before we proceed further, let us introduce some notation. For an uncountable cardinal $\kappa$, we denote by
\[E^\kappa_\omega=\{\alpha<\kappa: cf(\alpha)=\omega\}\]
the set of ordinals below $\kappa$ of countable cofinality. Furthermore, if $A, B$ are any sets of ordinals, then $A < B$ means that $\alpha < \beta$ for all $\alpha \in A$, $\beta \in B$. Similarly, if $\alpha$ is an ordinal, then $\alpha < A$ ($A < \alpha$) means that $\{\alpha\} < A$ ($A < \{\alpha\}$).

The following space, known, at least for $\kappa=\omega_1$, as the ladder system space, seems to belong to the topological folklore since the early 1970s. 
Perhaps its best-known early appearances are in \cite{DS79} and \cite{Po79}.

\begin{definition}\label{def-ladder}
Consider an uncountable regular cardinal $\kappa$. 
Let $\cL = \{L_\alpha: \alpha\in E^\kappa_\omega\}$ be a system of ladders, i.e., for $\alpha\in E^\kappa_\omega$ we demand
\begin{itemize}
    \item $L_\alpha \sub \{\beta + 1: \beta < \alpha\}$,
    \item $L_\alpha\cap \beta$ is finite for all $\beta<\alpha$,
    \item $\sup(L_\alpha)=\alpha$.
\end{itemize}
For a subset $S$ of $E^\kappa_\omega$, we define the ladder system space $X_{S}(\cL)=X_S$ on $\kappa$:
\begin{itemize}
    \item The points of $X_S$ are elements of $\kappa$.
    \item The points of $\kappa\setminus S$ are isolated in $X_S$.
    \item The basic neighborhoods of $\alpha\in S$ are declared as all sets of the form
    \[(L_\alpha\setminus F)\cup\{\alpha\},\]
    where $F \sub L_\alpha$ is finite.
\end{itemize}
\end{definition}

The following should be clear:

\begin{proposition} 
Suppose that $\kappa$ is an uncountable regular cardinal.
Any ladder system space on $\kappa$ is a locally compact, scattered space of weight $\kappa$ and height $2$.
\end{proposition}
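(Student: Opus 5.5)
We verify each property directly from Definition \ref{def-ladder}: first that the declared sets form a neighborhood base of a Hausdorff topology, then local compactness, scatteredness with height $2$, and finally weight $\kappa$.

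\textbf{Topology.} Let $\tau$ be generated by the singletons $\{\beta\}$ for $\beta\in\kappa\setminus S$ together with the sets $(L_\alpha\setminus F)\cup\{\alpha\}$ for $\alpha\in S$ and finite $F\sub L_\alpha$. Each $L_\alpha$ consists of successor ordinals, which never have countable cofinality, so $L_\alpha\cap S=\emptyset$ and every point of $L_\alpha$ is isolated. Hence each basic set at $\alpha$ is open and contains $\alpha$. The basic sets at a point $\alpha\in S$ are closed under finite intersections, since removing $F$ and then $F'$ is the same as removing $F\cup F'$. So this is a legitimate neighborhood base.

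\textbf{Hausdorff.} Distinct points $\alpha\neq\gamma$ are separated as follows.
\begin{enumerate}[(i)]
\item If both points lie outside $S$, the singletons separate them.
\item If $\alpha\in S$ and $\gamma\notin S$, use $\{\gamma\}$ and $(L_\alpha\setminus\{\gamma\})\cup\{\alpha\}$.
\item If $\alpha<\gamma$ are both in $S$, the set $L_\alpha\cap L_\gamma$ is contained in $L_\gamma\cap\alpha$, which is finite by the second ladder condition. Removing this finite set from $L_\gamma$ gives disjoint basic neighborhoods.
\end{enumerate}

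\textbf{Local compactness.} It suffices that each basic neighborhood $U=(L_\alpha\setminus F)\cup\{\alpha\}$ is compact. Any open cover of $U$ has a member containing $\alpha$, and that member contains a basic set $(L_\alpha\setminus F')\cup\{\alpha\}$. This leaves only the finitely many points of $F'\setminus F$, each needing one more member of the cover. (Equivalently, $L_\alpha$ is a sequence converging to $\alpha$, and $U$ is the sequence with its limit.) Isolated points have the compact neighborhood consisting of the point itself.

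\textbf{Scatteredness and height.} The isolated points of $X_S$ are exactly $\kappa\setminus S$, because every basic neighborhood of $\alpha\in S$ contains the infinite set $L_\alpha\setminus F$. So $X_S^{(1)}=S$. In the subspace $S$ every point is isolated, since its basic neighborhood meets $S$ only in $\{\alpha\}$. Thus $X_S^{(2)}=\emptyset$, the space is scattered, and its height is $2$. The only degenerate case is $S=\emptyset$, where the height is $1$; we read the statement for nonempty $S$.

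\textbf{Weight.} The isolated points $\kappa\setminus S$ must each appear as a singleton in every base. This set has size $\kappa$: it contains all successor ordinals below $\kappa$, and there are $\kappa$ of these because $\kappa$ is an uncountable cardinal. Hence $w(X_S)\ge\kappa$. Conversely, the base above has size at most $\kappa\cdot\omega=\kappa$, since each $\alpha\in S$ contributes only countably many finite subsets of the countable set $L_\alpha$. So $w(X_S)=\kappa$.

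There is no genuine obstacle. The only point requiring care is the Hausdorff property for two limit points, which is exactly where the condition that $L_\alpha\cap\beta$ is finite for $\beta<\alpha$ is used. Regularity of $\kappa$ is not needed for this proposition.
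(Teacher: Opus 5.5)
Your proof is correct. The paper gives no argument at all (it introduces the proposition with ``The following should be clear''), and your direct verification from the definition is exactly the routine check it leaves to the reader: the neighbourhood base, Hausdorffness via finiteness of $L_\gamma\cap\alpha$, compactness of the basic neighbourhoods, $X_S^{(1)}=S$ and $X_S^{(2)}=\emptyset$, and counting isolated points for the weight; your side remarks that $S=\emptyset$ gives a discrete space of height $1$ and that regularity of $\kappa$ plays no role here are also accurate.
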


The one-point compactification of $X_S$ will be denoted $KL_S$. 
It is quite easy to prove that the space $C_0(X_S)$ is isomorphic to $C(KL_S)$.

\begin{lemma}\label{hyper} 
Suppose that $\kappa$ is an uncountable regular cardinal and $X_S$ is a ladder system space on $\kappa$ for some $S \sub E^\kappa_\omega$. 
Then the Banach spaces $C_0(X_S)$ and $C(KL_S)$ are isomorphic.
\end{lemma}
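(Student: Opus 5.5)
The plan is to identify $C_0(X_S)$ with a hyperplane of $C(KL_S)$ and then apply Fact~\ref{pre: C(K) iso to its hyperplanes}. Write $KL_S = X_S\cup\{\infty\}$. The restriction map $f\mapsto f|_{X_S}$ sends the closed subspace $\{f\in C(KL_S): f(\infty)=0\}$ isometrically onto $C_0(X_S)$. It is onto because every $g\in C_0(X_S)$ extends continuously by $g(\infty)=0$; this is exactly the definition of vanishing at infinity for the one-point compactification. This subspace is the kernel of the nonzero functional $\delta_\infty$, so it is a hyperplane of $C(KL_S)$. Hence $C_0(X_S)$ is isometric to a hyperplane of $C(KL_S)$.

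By Fact~\ref{pre: C(K) iso to its hyperplanes}, it then suffices to show that $KL_S$ contains a nontrivial convergent sequence. I would split into two cases.

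\emph{Case $S\neq\emptyset$.} Pick $\alpha\in S$ and enumerate the ladder $L_\alpha$ increasingly as $(\beta_n)_{n\in\omega}$. This is possible because $L_\alpha$ is infinite with supremum $\alpha$ and every proper initial segment $L_\alpha\cap\beta$ is finite, so $L_\alpha$ has order type $\omega$. Every basic neighbourhood $(L_\alpha\setminus F)\cup\{\alpha\}$ of $\alpha$ contains all but finitely many $\beta_n$, so $\beta_n\to\alpha$. The points $\beta_n$ are distinct and different from $\alpha$, so the sequence is nontrivial.

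\emph{Case $S=\emptyset$.} Then $X_S$ is the discrete space $\kappa$, and $KL_S$ is its one-point compactification. Any injective sequence of points of $\kappa$ converges to $\infty$, since every neighbourhood of $\infty$ omits only a compact, hence finite, subset of $X_S$.

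In either case Fact~\ref{pre: C(K) iso to its hyperplanes} applies, giving $C(KL_S)\simeq\ker\delta_\infty\simeq C_0(X_S)$.

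There is no real obstacle here. The only points needing care are the order-type-$\omega$ argument for the ladder and the remark, already quoted before the Fact, that all hyperplanes of a Banach space are mutually isomorphic. That remark is what lets us use the particular hyperplane $\ker\delta_\infty$.
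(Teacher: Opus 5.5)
Your proof is correct and follows the paper's argument: you identify $C_0(X_S)$ with the hyperplane $\ker\delta_\infty$ of $C(KL_S)$ and then apply Fact~\ref{pre: C(K) iso to its hyperplanes}. The only difference is how the nontrivial convergent sequence is obtained: the paper gets it from $KL_S$ being infinite, compact and scattered, while you exhibit one explicitly (a ladder when $S\neq\emptyset$, and injective sequences converging to $\infty$ when $S=\emptyset$), which is equally valid and slightly more self-contained.
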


\begin{proof}
By the Stone-Weierstrass theorem, $C_0(X_S)$ is a hyperplane of $C(KL_S)$.
The space $KL_S$ is scattered and infinite, so it admits a nontrivial convergent sequence. 
The rest follows from Fact \ref{pre: C(K) iso to its hyperplanes}.
\end{proof}

\begin{lemma} \label{counting iso types: At most countably many nonzero measures at any point}
Suppose that $\kappa$ is an uncountable regular cardinal and $X_R, X_S$ are two ladder system spaces on $\kappa$ for some sets $R, S \sub E^\kappa_\omega$. If $T : C_0(X_R) \rightarrow C_0(X_S)$ is a bounded linear operator, then for every $\gamma \in \kappa$ we have
\[|\{\alpha \in X_S \colon T^*(\delta_\alpha)(\{\gamma\}) \neq 0\}| < \kappa.\]
\end{lemma}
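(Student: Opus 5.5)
The idea is to read off the value $T^*(\delta_\alpha)(\{\gamma\})$ by evaluating $T$ on a few characteristic functions of compact open sets. Then everything reduces to one fact: a function in $C_0(X_S)$ is nonzero at only countably many points.

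\emph{Step 1 (countable supports).} Every $g\in C_0(X_S)$ has countable support. Indeed, $\{\alpha: g(\alpha)\neq 0\}=\bigcup_n\{\alpha : |g(\alpha)|\ge 1/n\}$, and each set in this union is compact. So it suffices to show that every compact $C\sub X_S$ is countable. Points of $S$ are limit ordinals, while ladders consist of successor ordinals. Hence a basic neighbourhood $L_\alpha\cup\{\alpha\}$ of $\alpha\in S$ meets $S$ only in $\alpha$. Therefore $C\cap S$ is closed and discrete in $C$, hence finite. Cover each point of $C\cap S$ by its neighbourhood $L_\alpha\cup\{\alpha\}$ and each remaining point of $C$ by its isolated singleton. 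By compactness a finite subcover exists, so $C$ is contained in finitely many countable sets and is countable.

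\emph{Step 2 (test functions).} Fix $\gamma\in\kappa$ and write $\mu_\alpha=T^*(\delta_\alpha)$, a measure on $\alpha X_R$. If $\gamma\notin R$, then $\gamma$ is isolated. Put $U_n=\{\gamma\}$ for all $n$. If $\gamma\in R$, enumerate $L_\gamma=\{\lambda_0,\lambda_1,\dots\}$ and put $U_n=\{\gamma\}\cup\{\lambda_k:k\ge n\}$. In either case each $U_n$ is a basic neighbourhood, compact and open (a convergent sequence together with its limit). Hence $f_n=1_{U_n}\in C_0(X_R)$, and
\[
T f_n(\alpha)=\int f_n\,d\mu_\alpha=\mu_\alpha(U_n).
\]
Since the $U_n$ decrease with $\bigcap_n U_n=\{\gamma\}$, countable additivity of $\mu_\alpha$ gives $\mu_\alpha(U_n)\to\mu_\alpha(\{\gamma\})$.

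\emph{Step 3 (conclusion).} Suppose $\mu_\alpha(\{\gamma\})\neq 0$. By Step 2, some $n$ has $\mu_\alpha(U_n)\neq 0$, that is, $Tf_n(\alpha)\neq 0$. Hence
\[
\{\alpha\in X_S : \mu_\alpha(\{\gamma\})\neq 0\}\sub \bigcup_{n\in\omega}\{\alpha : Tf_n(\alpha)\neq 0\}.
\]
By Step 1 each set on the right is countable, so the union is countable. Since $\kappa$ is uncountable, the union has cardinality $<\kappa$, which proves the lemma.

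The only step needing real care is Step 1, namely the description of compact subsets of the ladder system space. Everything else is just Riesz representation and countable additivity; regularity of $\kappa$ is not even needed beyond $\kappa>\omega$.
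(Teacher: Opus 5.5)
Your proof is correct, and it reaches the conclusion by a different route from the paper's.

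The paper argues by contradiction. Assuming there are $\kappa$ many $\alpha$ with $T^*(\delta_\alpha)(\{\gamma\})\neq 0$, it uses the regularity of $\kappa$ to pass to a subfamily of size $\kappa$ with two uniform features: a common $\varepsilon$ with $|T^*(\delta_\alpha)(\{\gamma\})|>\varepsilon$, and a single cofinite $L\subseteq L_\gamma$ with $|T^*(\delta_\alpha)|(L)<\varepsilon/2$. The one test function $1_{L\cup\{\gamma\}}$ then satisfies $|T(1_{L\cup\{\gamma\}})|>\varepsilon/2$ on $\kappa$ many points. This contradicts vanishing at infinity, using the fact (stated there without proof) that a set of size $\kappa$ cannot lie in a compact subset of $X_S$.

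You instead use all the tails $U_n$ at once. Countable additivity lets you cover the bad set by the countably many supports of the functions $Tf_n$, and you prove explicitly that compact subsets of $X_S$ are countable. This avoids any uniformization or pigeonhole step and needs only $\kappa>\omega$. It gives the stronger conclusion that the set is countable, and it supplies the compactness fact the paper leaves implicit. The paper's version, in exchange, produces one explicit witness function failing to vanish at infinity. For the application in Lemma \ref{notsurjective} (defining $G$), only the bound $<\kappa$ is needed, so either argument suffices.

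A minor remark on your Step 1: finiteness of $C\cap S$ is not actually needed. The finite-subcover argument alone already shows $C$ is countable.
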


\begin{proof}  
Assume towards a contradiction that there is $\gamma \in X_R$ such that cardinality the set
\[A = \{\alpha \in X_S \colon T^*(\delta_\alpha)(\{\gamma\}) \neq 0\}\]
equals $\kappa$. 
By passing to a subset of $A$ of cardinality $\kappa$, we may assume that there is $\eps>0$ such that $|T^*(\delta_\alpha)(\{\gamma\})|>\eps$ for all $\alpha\in A$. 
If $\gamma\in X_R$ is isolated in $X_R$, then $1_{\{\gamma\}}$ is continuous and then
\[|T(1_{\{\gamma\}})(\alpha)|= |T^*(\delta_\alpha)(\{\gamma\})|>\eps\]
for all $\alpha\in A$, which is impossible since $A$ is noncompact as a subset of $\kappa$ of cardinality $\kappa$, but continuous functions in $C_0(X_S)$ must vanish at infinity. 

Otherwise, $\gamma$ is not isolated in $X_R$. 
Let $L_\gamma$ be the ladder at $\gamma$.  
By passing to a subset of $A$ of cardinality $\kappa$, using the regularity of $\kappa$ and the regularity of the measure $T^*(\delta_\alpha)$, we may assume that there is a cofinite subset $L$ of $L_\gamma$ such that $|T^*(\delta_\alpha)|(L)<\eps/2$ for every $\alpha\in A$. 
Now $1_{L\cup\{\gamma\}}$ is continuous on $X_R$ and
\[|T(1_{L\cup\{\gamma\}})(\alpha)|= |T^*(\delta_\alpha)(L\cup\gamma)|>\eps-\eps/2=\eps/2\]
for all $\alpha\in A$, which is again impossible, as continuous functions in $C_0(X_S)$ must vanish at infinity. 
\end{proof}

The following lemma will be the main tool for proving Theorem \ref{many}.

\begin{lemma}\label{notsurjective}
Suppose that $\kappa$ is an uncountable regular cardinal and that $R, S \sub E_\omega^\kappa$ are two sets such that $S\setminus R$ is stationary in $\kappa$. 
Then there is no bounded linear operator from $C_0(X_R)$ into $C_0(X_S)$ with a dense range. 

Consequently, there is no bounded linear operator $C(KL_R)$ into $C(KL_S)$ with a dense range.
\end{lemma}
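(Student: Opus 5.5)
The plan is a closing-off argument. It combines the atomic representation of functionals on $C_0(X_R)$, Lemma \ref{counting iso types: At most countably many nonzero measures at any point}, and stationarity of $S\setminus R$. Suppose $T\colon C_0(X_R)\to C_0(X_S)$ is bounded with dense range. Then $T^*$ is injective, so $T^*(\delta_\alpha)\neq 0$ for every $\alpha\in\kappa$. By the Rudin theorem remark in the preliminaries, each $\mu_\alpha=T^*(\delta_\alpha)$ is a purely atomic measure on $X_R$ with countable support $D_\alpha\subseteq\kappa$ (the point at infinity carries no mass).

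\textbf{Step 1: a club.} For each $\gamma<\kappa$, Lemma \ref{counting iso types: At most countably many nonzero measures at any point} and the regularity of $\kappa$ show that $\{\alpha: \mu_\alpha(\{\gamma\})\neq0\}$ is bounded in $\kappa$. Put $g(\gamma)$ for its supremum, and $h(\beta)=\sup D_\beta<\kappa$, which is finite since $D_\beta$ is countable and $\kappa$ is regular uncountable. Let $C$ be the club of $\alpha<\kappa$ closed under $g$ and $h$: if $\gamma<\alpha$ then $g(\gamma)<\alpha$, and if $\beta<\alpha$ then $h(\beta)<\alpha$. Since $S\setminus R$ is stationary, we can pick $\alpha\in C\cap(S\setminus R)$.

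\textbf{Step 2: separating supports.} For $\beta<\alpha$ we have $D_\beta\subseteq\alpha$. Conversely, if $\gamma\in D_\alpha$ with $\gamma<\alpha$, then $\alpha\le g(\gamma)<\alpha$, which is absurd; hence $D_\alpha\subseteq[\alpha,\kappa)$. Next we check that $[\alpha,\kappa)$ is clopen in $X_R$. It is open, since $\alpha\notin R$ is isolated and every point of $R$ above $\alpha$ has a ladder whose intersection with $[0,\alpha]$ is finite, so a cofinite part of the ladder lies above $\alpha$. Its complement $[0,\alpha)$ is open, since ladders of points below $\alpha$ lie below $\alpha$. So for $f\in C_0(X_R)$ the function $f\cdot 1_{[\alpha,\kappa)}$ is again in $C_0(X_R)$. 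Because $\mu_\alpha\neq0$ and $C_0(X_R)$ separates measures, choose $f$ with $\int f\,d\mu_\alpha\neq 0$, and set $f'=f\cdot 1_{[\alpha,\kappa)}$. Then $\int f'\,d\mu_\alpha=\int f\,d\mu_\alpha\neq0$, while $\int f'\,d\mu_\beta=0$ for all $\beta<\alpha$.

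\textbf{Step 3: contradiction.} Since $\alpha\in S$, the ladder $L_\alpha$ in $X_S$ converges to $\alpha$. Hence $T(f')(\beta)\to T(f')(\alpha)$ as $\beta\to\alpha$ along $L_\alpha$, by continuity of $T(f')$. The left side is $\int f'\,d\mu_\beta=0$ and the right side is nonzero, a contradiction. The "consequently" part follows from Lemma \ref{hyper}, composing with the isomorphisms $C(KL_R)\simeq C_0(X_R)$ and $C(KL_S)\simeq C_0(X_S)$. The main delicate point is Step 2, namely choosing the club so that both $g$ and $h$ are absorbed and checking that $[\alpha,\kappa)$ is clopen. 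This uses exactly $\alpha\notin R$ (so $\alpha$ is isolated in $X_R$) and $\alpha\in S$ (so $\alpha$ is a limit in $X_S$).
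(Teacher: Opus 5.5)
Your proof is correct. Its set-up matches the paper's: your $h$ and $g$ are essentially the paper's $F$ and $G$, both obtained from the regularity of $\kappa$ and Lemma~\ref{counting iso types: At most countably many nonzero measures at any point}, and you likewise fix $\alpha\in C\cap(S\setminus R)$.

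The endgame is different. The paper uses the two bounds only at points of the ladder:
\begin{itemize}
\item it recursively picks $\beta_n\in L_\alpha$ so that the supports of $T^*(\delta_{\beta_n})$ lie in disjoint blocks climbing up to $\alpha$;
\item it then shows that every $f\in C_0(X_R)$ is uniformly small on some tail $[\beta,\alpha)$, because $\alpha\notin R$;
\item hence $T^*(\delta_{\beta_n})\to 0$ weak$^*$, and $T^*(\delta_\alpha)=0$ as their weak$^*$ limit.
\end{itemize}
You need only the upper bound $D_\beta\subseteq\alpha$ for $\beta<\alpha$. You apply the lower bound at $\alpha$ itself to get $D_\alpha\subseteq[\alpha,\kappa)$. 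The hypothesis $\alpha\notin R$ then enters as the clopenness of $[\alpha,\kappa)$ in $X_R$. A single cut-off $f'=f\cdot 1_{[\alpha,\kappa)}$ separates $\mu_\alpha$ from all earlier $\mu_\beta$, and continuity of $T(f')$ at $\alpha\in S$ gives the contradiction.

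What each route buys: yours avoids the interleaved recursion $\beta_n=\beta'_{2n}$ and the tail-smallness argument, so it is shorter and makes the role of $\alpha\notin R$ more transparent. The paper's version never has to locate the support of $T^*(\delta_\alpha)$; it gets the vanishing purely as a weak$^*$ limit.

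One trivial slip: $\sup D_\beta$ is $<\kappa$, not \emph{finite}.
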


\begin{proof}
Suppose that $T: C_0(X_R)\rightarrow C_0(X_S)$ is a bounded linear operator. 
We will show that it does not have a dense range.
Consider the adjoint operator $T^*: C_0(X_S)^*\rightarrow C_0(X_R)^*$. 
Let $F, G: \kappa\rightarrow \kappa$ be nondecreasing functions such that 
\[supp(T^*(\delta_{\alpha})) \sub F(\alpha),\]
\[\bigcup\{supp(T^*(\delta_{\beta})): G(\alpha)<\beta<\kappa\}\cap \alpha=\emptyset\]
for every $\alpha<\kappa$. 
The existence of $F$ follows from the fact that the supports of Radon measures on $\alpha X_R$ that vanish on the one point added in the compactification are countable and such sets must be bounded in $\kappa$ since it is an uncountable regular cardinal.

The proof of the existence of $G$ uses the fact that $T$ is bounded: if the value $G(\alpha)$ could not be found, there would exist $A \sub \kappa$ of cardinality $\kappa$ and a fixed $\gamma<\alpha$ such that $T^*(\delta_\beta)(\{\gamma\})\not=0$ for all $\beta\in A$, which contradicts Lemma \ref{counting iso types: At most countably many nonzero measures at any point}. 

Using the standard closure argument one can easily verify that, for any function $H: \kappa\rightarrow \kappa$, the set $\{\alpha < \kappa \colon H[\alpha]\sub \alpha\}$ is a club set in $\kappa$. Since the intersection of two club sets in $\kappa$ is a club set in $\kappa$ we obtain a club set $C \sub\kappa$ which is closed under $F$ and $G$, that is, for every $\alpha\in C$ we have $F[\alpha], G[\alpha] \sub \alpha$.

Fix $\alpha\in C\cap(S\setminus R)$. By recursion, construct a strictly increasing sequence $\beta_n'\in L_\alpha$ such that $F(\beta_n'), G(\beta_n')<\beta_{n+1}'$. 
This can be done since $C$ is closed under $F$ and $G$, and $L_\alpha$ is cofinal in $\alpha$. 
The definitions of $F$ and $G$ imply that
\[supp(T^*(\delta_{\beta_{n+1}'})) \sub [\beta_n', \beta_{n+2}')\]
for every $n\in \omega$.

Now define $\beta_n=\beta_{2n}'$ and note that the supports of $T^*(\delta_{\beta_{n}})$ satisfy
\[supp(T^*(\delta_{\beta_{n}}))<\beta_{2n+1}'<supp(T^*(\delta_{\beta_{n+1}}))<\alpha.\]
We can see that $(\beta_n)_{n\in\omega}$ converges to $\alpha$ in $X_S$ as it is a strictly increasing sequence included in $L_\alpha$. 
Hence $(\delta_{\beta_n})_{n\in\omega}$ weakly$^*$ converges to $\delta_\alpha$ in $C_0(X_S)^*$.
The weak$^*$ continuity of $T^*$ yields the weak$^*$ convergence of $(T^*(\delta_{\beta_n}))_{n\in\omega}$ to $T^*(\delta_\alpha)$ in $C_0(X_R)^*$.

We will show that in fact $(T^*(\delta_{\beta_n}))_{n\in\omega}$ weakly$^*$ converges to $0$ in $C_0(X_R)$, which will imply $T^*(\delta_\alpha)=0$ showing that $T^*$ is not injective and hence $T$ cannot have a dense range.

Indeed, when $f\in C_0(X_R)$, then for every $\eps>0$ there is $\beta<\alpha$ such that all values of $f|[\beta, \alpha)$ are in the interval $(-\eps, \eps)$. 
Otherwise, there would be a strictly increasing and cofinal sequence $(\alpha_n)_{n\in \omega}$ of points in $\alpha$ satisfying $|f(\alpha_n)| \geq \eps$.
As the point $\alpha$ is isolated in $X_R$ and the basic neighborhood of other points in $X_R$ have bounded intersections with $\alpha$, the sequence $(\alpha_n)_{n\in \omega}$ is divergent to infinity, which is a contradiction.

It follows from the above and the fact that $T^*(\delta_{\beta_n})$ form a bounded sequence of measures that the sequence $T^*(\delta_{\beta_n})(f)$ converges to $0$ for each $f\in C_0(X_R)$. 
Thus, we have that the sequence $(T^*(\delta_{\beta_n}))_{n\in\omega}$ weakly$^*$ converges to $0$ in $C_0(X_S)^*$ and $T^*(\delta_\alpha) = 0$ as needed to conclude that $T$ does not have dense range.
\end{proof}

We can now proceed to the proof of Theorem \ref{many}.

\begin{proof}[Proof of Theorem \ref{many}]
Fix $\kappa$ as in the statement. By a theorem of Solovay (Theorem 8.10 of \cite{TJ03}
also cf. Lemma 8.8 of \cite{TJ03}) there is a pairwise disjoint family $\{S_\xi: \xi<\kappa\}$
of stationary subsets of $E_\omega^\kappa$. For every $A\subseteq\kappa$ consider
\[S(A)=\bigcup_{\xi\in A} S_\xi.\]
Note that $S(A)\triangle S(B)$ is stationary for any two distinct $A, B \sub \kappa$.
So, by Lemmas \ref{notsurjective} and \ref{hyper} the family $\{C(KL_{S(A)}) \colon A \sub \kappa\}$ is the desired family of Banach spaces.
\end{proof}

\section{Topological properties of separable compact lines} \label{sec_top_properties}

In this section we use our convention that we identify separable compact lines with the spaces of the form $K_A$, where $K\subseteq [0,1]$ and $A\subseteq K$, see subsection \ref{subs: sep cpt lines}.

First, we will show that for any cardinal number $\kappa$ satisfying $\omega\le \kappa\le 2^\omega$, there are $2^{\kappa}$ pairwise nonhomeomorphic separable compact lines $K_A$ of weight $\kappa$ (recall that, for infinite $A$, the weight of $K_A$ is equal to $|A|$). 

We start with the following observation of van Douwen \cite[Section 10]{vD84}. 

\begin{proposition}
Let $K_A, L_B$ be separable compact lines that are homeomorphic. Then there exist countable sets $C \sub K$ and $D \sub L$ such that $K \sm (A \cup C)$ and $L \sm (B \cup D)$ are homeomorphic.
\end{proposition}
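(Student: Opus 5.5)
Let $h\colon K_A\to L_B$ be a homeomorphism. The idea is to locate, inside $K_A$, a topologically defined subspace homeomorphic to $K\setminus A$ up to countably many points. Call $x\in K_A$ a \emph{jump point} if it is an endpoint of a jump, i.e.\ $x=(a,0)$ or $x=(a,1)$ with $a\in A$ (two consecutive points of the order). The set
\[
J_K=K_A\setminus\{(t,0): t\in K\setminus A\}
\]
consists of jump points together with possibly some points coming from isolated points or endpoints. Then $K_A\setminus J_K$, with the subspace topology, is homeomorphic to $K\setminus A$ with the topology inherited from $[0,1]$. This holds because the projection $\pi\colon K_A\to K$ is continuous and monotone, and it is injective on $\{(t,0):t\in K\setminus A\}$. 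On that set the order topology of $K_A$ agrees with the Euclidean topology: a neighbourhood of $(t,0)$ with $t\notin A$ is an interval of $K_A$, and its trace on the fibres is an interval of $K$ around $t$.

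The main step, and the main obstacle, is to characterize the points $(t,0)$, $t\in K\setminus A$, topologically up to countably many exceptions. For such a point, every neighbourhood contains points on both sides, and the point is not an endpoint of any clopen interval of the form $[\cdot,x]$ or $[x,\cdot]$. For a non-isolated point of $K_A$, both one-sided character conditions can be detected topologically. A point $x$ has a clopen neighbourhood base of intervals with $x$ as an endpoint exactly when $x$ is the endpoint of a jump, or is isolated from one side. The only difficulty is that the left/right orientation is not preserved by $h$. I would handle this by defining $P(X)$, for a compact line $X$, as the set of points $x$ such that $x$ is a non-isolated point and $X\setminus\{x\}$ has no clopen subset $U$ with $x\in\overline U$ and $x\notin\overline{X\setminus(U\cup\{x\})}$. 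In words, $x$ is not a ``one-sided'' point. This notion is purely topological, so $h[P(K_A)]=P(L_B)$.

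Next I check that $P(K_A)$ differs from $\{(t,0):t\in K\setminus A\}$ by a countable set. Points $(a,0)$ and $(a,1)$ with $a\in A$ are excluded, because the two sides of a jump are clopen. Conversely, take a point $(t,0)$ with $t\notin A$ that is approached from both sides in $K$. If there were such a clopen $U$, it would be an open set of $K\setminus\{t\}$ lying on one side. Its projection would give a clopen half-neighbourhood, which forces $t$ to be a one-sided limit from only one side, or an endpoint of a gap of $K$. Endpoints of gaps (complementary intervals of $K$ in $[0,1]$) together with isolated points and $0,1$ form a countable set $C$.

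Finally, if the clopen-based definition proves awkward, I would instead use the simpler invariant \emph{$x$ has countable character from exactly one side} and handle the exceptions similarly. With $C\subseteq K$ and $D\subseteq L$ the countable exceptional sets, $h$ restricts to a homeomorphism between $P(K_A)$ and $P(L_B)$. Transporting through the projections gives a homeomorphism between $K\setminus(A\cup C)$ and $L\setminus(B\cup D)$, after enlarging $C$ and $D$ by the countably many points on which the two sets differ.
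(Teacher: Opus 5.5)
You correctly observe that the projection $\pi_K\colon K_A\to K$ maps $S_K=\{(t,0):t\in K\setminus A\}$ homeomorphically onto $K\setminus A$. The central step, however, fails.

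\textbf{Your definition of $P(X)$ is empty.} For every non-isolated $x$, the set $U=X\setminus\{x\}$ is clopen in $X\setminus\{x\}$, has $x$ in its closure, and satisfies $X\setminus(U\cup\{x\})=\emptyset$. So every non-isolated point counts as ``one-sided''. Your ``Conversely'' paragraph also rests on a false claim: a clopen subset of $K_A\setminus\{x\}$ accumulating at $x$ need not lie on one side of $x$. In a zero-dimensional line you can take every other interval of a sequence of clopen intervals converging to $x$, from one side or from both. ``Sides'' and ``character from one side'' are order notions, not topological ones, and every point of $K_A$ is first countable anyway.

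\textbf{No pointwise topological invariant can work.} This holds even up to countable sets. Let $G\subsetneq\mathbb{R}$ be an uncountable $\mathbb{Q}$-linear subspace with $1\in G$, and put $A=G\cap(0,1)$. Take $t\in(0,1)\setminus A$ and $a\in A$. Choose rationals $p_n\uparrow t$, $q_n\downarrow t$, $s_n\uparrow a$ so that $U=[(p_0,1),(q_0,0)]$ and $V=[(s_0,1),(a,0)]$ are disjoint clopen intervals. Then
\[
U\setminus\{(t,0)\}=\bigcup_n\bigl([(p_n,1),(p_{n+1},0)]\cup[(q_{n+1},1),(q_n,0)]\bigr),\qquad V\setminus\{(a,0)\}=\bigcup_n[(s_n,1),(s_{n+1},0)].
\]
Increasing affine maps with rational coefficients preserve $G$, so any two of these pieces are order isomorphic. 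Match the left pieces of $U$ with the even pieces of $V$ and the right pieces with the odd ones. This gives a homeomorphism $U\to V$ sending $(t,0)$ to $(a,0)$. Swapping $U$ and $V$ and fixing the rest gives a self-homeomorphism of $I_A$. Hence the homeomorphism group of $I_A$ is transitive on the points other than the two endpoints. Since $S$ and its complement each contain uncountably many such points, any topologically defined $P(I_A)$ differs from $S$ by an uncountable set. The strategy is not repairable.

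\textbf{The missing idea is to use the given $h$ globally.} Every continuous map $f$ from $K_A$ into a metric space satisfies $f(a,0)=f(a,1)$ for all but countably many $a\in A$. Indeed, if infinitely many $a$ had a jump larger than $\varepsilon$, take a strictly monotone sequence $a_n\to t$ among them. Then $(a_n,0)$ and $(a_n,1)$ converge to the same point of $K_A$, a contradiction. This is essentially the fact quoted in the proof of Lemma~\ref{lem: C(KA) and C(KA) x C(M) are isomorphic}.

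Apply it to $\pi_K\circ h^{-1}\colon L_B\to K$. This gives a countable $E\subseteq B$ such that $\pi_K h^{-1}(b,0)=\pi_K h^{-1}(b,1)$ for all $b\in B\setminus E$. Now suppose $t\in K\setminus A$ and $h(t,0)=(b,i)$ with $b\in B$. Then $h^{-1}(b,1-i)\neq(t,0)$. Since $(t,0)$ is the only point of $K_A$ over $t$, this point is not over $t$, so $b\in E$.

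Thus $h$ maps all but countably many points of $S_K$ into $S_L$, and symmetrically for $h^{-1}$. After discarding countably many more points, $h$ restricts to a bijection between $(K\setminus(A\cup C))\times\{0\}$ and $(L\setminus(B\cup D))\times\{0\}$. Your projection observation then finishes the proof. Note that the paper does not prove this proposition itself; it quotes it from van Douwen.
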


The above proposition together with a standard fact that each subset of $I$ can be homeomorphic to at most continuum many subsets of $I$, and some routine cardinal calculations, easily implies that for a fixed separable compact line  $K_A$ we have at most $2^\omega$ many separable compact lines $L_B$ homeomorphic to $K_A$ (cf.\ \cite[Section 10]{vD84}, \cite[Section 2]{Ma08}). This immediately allows us to obtain the following variation of van Douwen's result that there are $2^{2^\omega}$ separable compact lines. 

\begin{corollary} \label{cor: At least 2^omega1 spaces}
If a cardinal number $\kappa\le 2^\omega$ satisfies $2^{\kappa} > 2^\omega$, then there are $2^{\kappa}$ pairwise nonhomeomorphic separable compact lines $K_A$ of weight $\kappa$.
\end{corollary}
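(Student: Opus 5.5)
Fix $K=I=[0,1]$ and consider the family $\mathcal{A}=\{A\subseteq I: |A|=\kappa\}$. For every $A\in\mathcal{A}$ the line $I_A$ is a separable compact line of weight $|A|=\kappa$, as recalled in subsection \ref{subs: sep cpt lines}. Since $\kappa\le 2^\omega=|I|$, the cardinality of $\mathcal{A}$ is $(2^\omega)^\kappa=2^\kappa$. Consequently we have $2^\kappa$ distinct labels $A$, and the task is to extract $2^\kappa$ of them giving pairwise nonhomeomorphic spaces $I_A$.

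The key input is the remark preceding the corollary: by van Douwen's proposition, together with the fact that a subset of $I$ is homeomorphic to at most $2^\omega$ subsets of $I$, each fixed separable compact line $K_A$ is homeomorphic to at most $2^\omega$ lines $L_B$. I will use this in the following form: for every $A\in\mathcal{A}$ the class
\[[A]=\{B\in\mathcal{A}: I_B \text{ is homeomorphic to } I_A\}\]
has cardinality at most $2^\omega$. To justify this I need to check the counting argument briefly. A homeomorphism $I_A\cong I_B$ yields countable sets $C,D$ with $I\setminus(A\cup C)\cong I\setminus(B\cup D)$. Given $A$, there are only $2^\omega$ choices of the countable set $C$. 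For each such choice, there are at most $2^\omega$ subsets $Y\subseteq I$ homeomorphic to $I\setminus(A\cup C)$, because a homeomorphism between subsets of $I$ extends to one between $G_\delta$ sets (Lavrentiev) and is coded by a real. From such a $Y=I\setminus(B\cup D)$ the set $B$ is determined up to the choice of the countable set $D$ and the countable set $B\cap D$, which gives another $2^\omega$ possibilities. Altogether $|[A]|\le 2^\omega$.

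Now $\mathcal{A}$ is partitioned by homeomorphism type into classes of size at most $2^\omega$. If there were $\lambda<2^\kappa$ classes, then $|\mathcal{A}|\le\lambda\cdot 2^\omega<2^\kappa$, using the hypothesis $2^\omega<2^\kappa$. This contradicts $|\mathcal{A}|=2^\kappa$. Hence there are exactly $2^\kappa$ classes, the upper bound being trivial. Picking one representative $A$ from each class gives $2^\kappa$ pairwise nonhomeomorphic lines $I_A$ of weight $\kappa$.

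The main obstacle is the count $|[A]|\le 2^\omega$, and specifically the step that recovers $B$ from $I\setminus(B\cup D)$ with only $2^\omega$ ambiguity. If that step gives trouble, I would restrict $\mathcal{A}$ to sets $A$ that are dense with dense complement in $I$. This does not change the cardinality $2^\kappa$, and it makes the bookkeeping cleaner.
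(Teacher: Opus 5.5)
Your proof is correct and follows the paper's route. Van Douwen's proposition, together with the fact that a subset of $I$ has at most $2^\omega$ homeomorphic copies in $I$, bounds each homeomorphism class of labels by $2^\omega$; then $2^\kappa>2^\omega$ forces $2^\kappa$ classes among the $2^\kappa$ sets $A\subseteq I$ of size $\kappa$. Your explicit recovery of $B$ from $(C,Y,D,B\cap D)$ is exactly the ``routine cardinal calculation'' the paper leaves implicit, so the fallback in your last paragraph is unnecessary.
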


We will show that the above result also holds true when $2^{\kappa} = 2^\omega$.

\begin{theorem}\label{thm: At least 2^omega1 spaces}
For any infinite cardinal number $\kappa\le 2^\omega$, there are $2^{\kappa}$ pairwise nonhomeomorphic separable compact lines $K_A$ of weight $\kappa$.
\end{theorem}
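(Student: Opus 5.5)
We would split according to whether $2^\kappa>2^\omega$. If $2^\kappa>2^\omega$, Corollary \ref{cor: At least 2^omega1 spaces} already gives the claim. So the new case is $2^\kappa=2^\omega$, and there it suffices to produce $2^\omega$ pairwise nonhomeomorphic separable compact lines of weight $\kappa$. Van Douwen's invariant is not useful here, since the statement holds only modulo countable sets. We therefore plan to encode countable data by an invariant that lives on the \emph{metrizable part} of the line.

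\textbf{Construction.} For a compact line $L$, let $U(L)$ be the set of points of $L$ having a metrizable open neighbourhood. This is an open set determined by the topology of $L$, so any homeomorphism $L\to L'$ restricts to a homeomorphism $U(L)\to U(L')$. Fix the Cantor set $C\subseteq[0,1]$ and a dense set $A\subseteq C$ with $|A|=\kappa$, and put $M=C_A$. Because $A$ is dense and every nonempty open subset of $C$ is uncountable, each open interval of $M$ contains uncountably many doubled points. Hence no point of $M$ has a metrizable neighbourhood. Now let $X=\{\alpha_n:n\in\omega\}$ be a nonempty countable subset of $\omega_1$. We define $L_X$ as the ordered sum of four pieces, in this order:
\begin{itemize}
\item[(a)] $M$;
\item[(b)] the blocks $\omega^{\alpha_0}+1,\ \omega^{\alpha_1}+1,\dots$, listed in order of increasing $n$;
\item[(c)] one extra top point $\infty$, which is the limit of the blocks;
\item[(d)] if one prefers, a copy of $M$ placed after $\infty$.
\end{itemize}
Each $L_X$ is a compact line, it is separable (the blocks are countable), and its weight is $\kappa$. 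By Theorem \ref{int:1} it is of the form $K_B$. The pieces added to $M$ are countable and metrizable, so $U(L_X)$ is exactly the disjoint union of the clopen blocks $\omega^{\alpha_n}+1$ (together with $\infty$ if (d) is omitted). The only points to check here are at the junctions, and there the copy of $M$ spoils metrizability of any neighbourhood.

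\textbf{Invariant.} $U(L_X)$ is a scattered, locally compact, metrizable space. For an ordinal $\alpha$, define
\[
P_\alpha(L)\iff U(L)\ \text{has a point of Cantor--Bendixson rank exactly }\alpha\text{ with a neighbourhood containing no point of rank }>\alpha.
\]
This property is invariant under homeomorphisms of $L$. In $L_X$ the top point of the block $\omega^{\alpha_n}+1$ witnesses $P_{\alpha_n}$. Conversely, every other point of a block $\omega^{\beta}+1$ is a limit of points of higher rank inside that block, so it cannot witness $P_\alpha$. Hence
\[
\{\alpha:P_\alpha(L_X)\}=X .
\]
Distinct countable $X\subseteq\omega_1$ therefore give nonhomeomorphic lines. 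There are $\omega_1^{\omega}=2^\omega=2^\kappa$ such $X$, which finishes this case.

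\textbf{Main obstacle.} The delicate step is the verification that $U(L_X)$ is exactly the union of the blocks. We must show that no point of $M$, in particular its right endpoint adjacent to the first block, has a metrizable neighbourhood, and that $\infty$ is handled consistently. If (d) is omitted, $\infty$ has rank $\sup_n(\alpha_n+1)$ and might create a spurious witness for $P$. Two fixes would remove this: choose all $X$ with a common supremum, or simply place a copy of $M$ after $\infty$ as in (d). We would carry out the latter, since it makes the invariant cleanly equal to $X$.
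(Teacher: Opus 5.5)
Your invariant is not what you claim, and in fact the family $\{L_X\}$ is far too small. The assertion that every non-top point of a block $\omega^\beta+1$ ``is a limit of points of higher rank'' is false. In any scattered space $Y$, a point of rank $\alpha$ is isolated in the closed set $Y^{(\alpha)}$, so it has a neighbourhood in which all other points have rank $<\alpha$. Hence $P_\alpha(L)$ only says that $U(L)$ contains a point of rank $\alpha$. Since the point $\omega^\gamma$ of $\omega^{\alpha_n}+1$ has rank $\gamma$, the set $\{\alpha: P_\alpha(L_X)\}$ is the ordinal $\sup_n(\alpha_n+1)$, possibly together with the rank of $\infty$.

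More decisively, let $Q_X$ be the interval from the first point of the first block to $\infty$. Its first point is the immediate successor of $\max M$, and $\infty$ is either the maximum of $L_X$ or, in version (d), the immediate predecessor of $\min M'$. So $Q_X$ is clopen, countable and compact, and by the Mazurkiewicz--Sierpi\'nski classification $Q_X\cong\omega^\beta\cdot n+1$ for some $\beta<\omega_1$, $n<\omega$. Thus $L_X$ is the topological sum of $M$ (and $M'$) with $\omega^\beta\cdot n+1$, and its homeomorphism type depends only on $(\beta,n)$. For instance $L_\omega\cong L_{\omega\setminus\{0\}}$, since both pieces are $\omega^\omega+1$. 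Your family therefore has at most $\omega_1$ types, which is fewer than $2^\omega$ whenever \textsf{CH} fails. The case $2^\kappa=2^\omega$ contains such situations: $\kappa=\omega$ always, and $\kappa=\omega_1$ under \textsf{MA}$+\neg$\textsf{CH}. Any construction whose metrizable part is countable hits this ceiling.

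The missing idea is to let the metrizable part be an uncountable compactum that carries the information. Take $2^\omega$ pairwise nonhomeomorphic compact sets $L\subseteq[0,1/3]$ (Mazurkiewicz--Sierpi\'nski) and form $L\cup[2/3,1]_A$ with $A$ $\kappa$-dense in $(2/3,1)$. Your invariant $U(\cdot)$ (points with a metrizable neighbourhood) then recovers $L$ exactly. This is the paper's proof, and it handles $\kappa=\omega$ as well.

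Two further points in your write-up would also need repair. First, appending $M'$ after $\infty$ does not remove $\infty$ from $U(L_X)$. Since $\infty$ and $\min M'$ are consecutive, $(u,\infty]$ is an open countable neighbourhood of $\infty$; you would have to let the blocks converge to $\min M'$ itself. Second, density of $A$ in $C$ does not give uncountably many doubled points in every interval. You need $A$ to be $\omega_1$-dense in $C$ and to avoid $0$, $1$ and the endpoints of the removed intervals, which otherwise produce isolated points of $M$. For $\kappa=\omega$ the space $M$ is metrizable, so your argument does not apply there at all.
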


\begin{proof}
By Corollary \ref{cor: At least 2^omega1 spaces} it is enough to consider the case when $2^{\kappa} = 2^\omega$, and construct a family $\mathcal{K}$ of size $2^\omega$ consisting of pairwise nonhomeomorphic spaces $K_A$ of weight $\kappa$. 

By a classical result of Mazurkiewicz and Sierpiński \cite{MS20} there is a family $\cL$ of size $2^\omega$ consisting of pairwise nonhomeomorphic compact subsets of the interval $[0,1/3]$. If $\kappa = \omega$, then we are done. Otherwise, we take a  subset $A$ of $(2/3,1)$ such that $|A\cap P| = \kappa$, for any nonempty open subinterval $P$ of $(2/3,1)$, and we
put $\mathcal{K} = \{L\cup [2/3,1]_A: L\in \cL\}$. The family $\mathcal{K}$ has the required properties, since each point of $[2/3,1]_A$ has only nonmetrizable neighborhoods, hence $L\cup [2/3,1]_A$ and $L'\cup [2/3,1]_A \in \mathcal{K}$ are homeomorphic if and only if the spaces $L$ and $L'$ are homeomorphic.
\end{proof}

\begin{proposition} \label{prop_properties_K_A}
Each separable compact line is hereditarily separable, hereditarily Lindel\"of, and Fr\'echet.
\end{proposition}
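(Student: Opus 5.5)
By Theorem \ref{int:1} we may work with $L = K_A$, where $K\sub[0,1]$ is closed and $A\sub K$. The plan is to treat the three properties separately, in each case using the natural projection $\pi\colon K_A\to K$, $\pi(x,i)=x$. This map is continuous, order-preserving and at most two-to-one. Its only non-injective fibres are $\pi^{-1}(a)=\{(a,0),(a,1)\}$ for $a\in A$. The fibre of $a$ determines a jump $(a,0)<(a,1)$ with nothing in between. The idea is to transfer the relevant properties from the metrizable space $K$ to $K_A$.

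\textbf{Hereditarily Lindel\"of.} Let $\mathcal U$ be a family of open sets in $K_A$. Refine it to a family of open convex intervals and set $W=\bigcup\mathcal U$. The key observation is that each interval $(p,q)$ in $K_A$ differs from $\pi^{-1}(\pi((p,q)))$ by at most its two endpoint fibres, and that the projection of an open interval is an interval of $K$, which becomes open after removing at most two endpoints. Consider the open set $V=\bigcup_{U}\operatorname{int}_K\pi(U)$ in $K$. Since $K$ is second countable, countably many members of $\mathcal U$ already give the union $V$. The points of $W$ not covered by $\pi^{-1}(V)$ lie in fibres over points $x\in\pi(W)\sm V$. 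Every such $x$ is an endpoint of some $\pi(U)$, and it is isolated from one side in $\pi(W)$, because the interval $U$ extends only to one side of $x$ within $\pi(U)$. A subset of $\mathbb R$ consisting of points isolated from one side relative to a union of intervals is countable: to each such point one can assign a rational in the adjacent gap or interval, and this assignment is injective up to a factor of two. Choosing one extra member of $\mathcal U$ for each of these countably many points then yields a countable subcover of $W$. This is the step I expect to be the main obstacle, because the bookkeeping of the one-sided endpoints has to be done carefully. If it becomes awkward, an alternative is the standard fact that a linearly ordered space is hereditarily Lindel\"of iff it has no uncountable family of pairwise disjoint nonempty open intervals. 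Such a family in $K_A$ is ruled out because each nonempty open interval either contains a point of the countable dense set $D\times\{0,1\}$, with $D$ countable dense in $K$ together with the countably many isolated jump points, or else is a single point $(a,i)$ isolated in $K_A$. There are only countably many isolated points, since each such $a$ is an endpoint of a gap of $K$ or isolated in $K$.

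\textbf{Hereditarily separable.} Given $Y\sub K_A$, we take a countable dense $E\sub\pi(Y)$, using that $K$ is separable metrizable. We then add the countably many points $y\in Y$ such that $y$ is isolated from the left or from the right in $Y$. There are only countably many of these, by the same rational-assignment argument in the order. Any point of $Y$ that is a two-sided limit within $Y$ is then approximated by lifts of points of $E$, since order-intervals around it project to nondegenerate intervals of $K$ containing points of $E$. Together these give a countable dense subset of $Y$.

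\textbf{Fr\'echet.} This property comes from first countability. Each point $(x,0)$ has a countable left base given by intervals $((q,1),(x,0)]$ with $q<x$ rational-dense in $K$, or is isolated from the left. Symmetrically, each point $(x,1)$ has a countable right base. Hence $K_A$ is first countable, and first countable spaces are Fr\'echet. Alternatively, hereditary Lindel\"ofness already implies that every point is a $G_\delta$, and in a compact space this gives countable character.
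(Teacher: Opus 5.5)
Your route differs from the paper's. The paper notes that $K_A$ is a continuous image of the double arrow space $I_{(0,1)}$, which is the union of a Sorgenfrey line $(0,1)\times\{1\}$ and a reversed Sorgenfrey line $[0,1]\times\{0\}$. Hereditary separability and hereditary Lindel\"ofness pass to finite unions and continuous images, and first countability gives Fr\'echet. Working directly with $\pi$ is a legitimate alternative. Your Fr\'echet part is sound, and so is your \emph{fallback} Lindel\"of argument: separable $\Rightarrow$ ccc, and a ccc linearly ordered space is hereditarily Lindel\"of.

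\textbf{The gap is in hereditary separability.} You claim that only countably many $y\in Y$ are isolated from the left or from the right in $Y$, ``by the same rational-assignment argument''. In $K_A$ this is false. For each $a\in A$ the point $(a,1)$ is the immediate successor of $(a,0)$. So whenever both lie in $Y$, $(a,0)$ is isolated from the right and $(a,1)$ from the left. For $Y=I_{(0,1)}$, every non-endpoint is such a point. The rational assignment works in $\mathbb{R}$ but not in $K_A$, because the jumps $((a,0),(a,1))$ are empty. Consequently the uncountably many points like $(a,0)$, which are limits of $Y$ only from the left, are neither among the countably many points you add nor two-sided limits. Your argument says nothing about them.

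\textbf{Fix.} Count in $\mathbb{R}$ rather than in $K_A$, and test each point only from the side where its neighbourhoods live.
Let $\widetilde E=Y\cap\pi^{-1}(E)$, taking \emph{all} points of $Y$ over $E$.
Basic neighbourhoods are $(p,(a,0)]$ for $(a,0)$ with $a\in A$, $[(a,1),q)$ for $(a,1)$, and two-sided intervals for $(x,0)$ with $x\notin A$.
Hence $y\in Y$ lies in the closure of $\widetilde E$ unless $\pi(y)$ is isolated in $\pi(Y)$ from the left (if $y=(a,0)$, $a\in A$) or from the right (if $y=(a,1)$).
The set of one-sided isolated points of $\pi(Y)\subseteq\mathbb{R}$ is countable by the genuine rational argument.
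Adding the points of $Y$ over that set gives a countable dense subset of $Y$.

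\textbf{The same confusion appears in your primary Lindel\"of argument.} A point $x\in\pi(W)\setminus V$ need not be one-sided isolated in $\pi(W)$. In $I_{(0,1)}$ take $U_1=((s,1),(x,0)]$ and $U_2=[(x,1),(t,0))$: then $\pi(W)=(s,t)$, yet $x\notin V$. What is true is that $x$ is one-sided isolated in $\pi(W)\setminus V$. Moreover, with $V$ built from $\mathrm{int}_K\pi(U)$, a member $U$ may contain only one of the two points over an endpoint of $\pi(U)$ that is isolated from that side in $K$. So the countably many $U$ chosen to cover $V$ need not cover $W\cap\pi^{-1}(V)$; you should remove the endpoints of $\pi(U)$ instead. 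Since you offered the ccc route as a fallback, that part survives. The paper's reduction to two Sorgenfrey lines avoids all of this one-sided bookkeeping.
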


\begin{proof}
Every separable compact line has to be Fr\'echet, since it is a first countable space.
Each separable compact line is of the form $K_A$, so it is a continuous image of the double arrow space $I_{(0, 1)}$ and all these properties are preserved by continuous images.
The double arrow space is hereditarily separable, hereditarily Lindel\"of, as it is a union of two disjoint copies of the Sorgenfrey line, which is such.
\end{proof}

Observe that the above proposition implies, in particular, that the class of separable compact lines is closed under taking closed subsets.
\medskip

For an ordinal $\alpha$, $X^{(\alpha)}$ is the $\alpha$th Cantor-Bendixson derivative of the space $X$. 
The following result follows quite easily from Proposition \ref{prop_properties_K_A}.

\begin{corollary}\label{cor_properties_K^(alpha)}
Let $L$ be a separable compact line and $\alpha = \min\{\beta: L^{(\beta+1)} = L^{(\beta)}\}$. Then $\alpha < \omega_1$ and $|L\setminus  L^{(\alpha)}| \le \omega$.
\end{corollary}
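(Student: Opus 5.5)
The argument combines the standard Cantor--Bendixson analysis with Proposition \ref{prop_properties_K_A}, used in its hereditarily Lindel\"of and hereditarily separable forms.

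\emph{Step 1: the sequence stabilises at a countable stage.} The derivatives $L^{(\beta)}$ form a decreasing sequence of closed sets. Suppose $L^{(\beta+1)}\neq L^{(\beta)}$ for every $\beta<\omega_1$. For each such $\beta$ pick $x_\beta\in L^{(\beta)}\setminus L^{(\beta+1)}$. Then $x_\beta$ is isolated in $L^{(\beta)}$, so there is an open $U_\beta$ with $U_\beta\cap L^{(\beta)}=\{x_\beta\}$. For $\gamma>\beta$ we have $x_\gamma\in L^{(\gamma)}\subseteq L^{(\beta)}$ and $x_\gamma\neq x_\beta$, hence $x_\gamma\notin U_\beta$. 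Thus $\{x_\beta:\beta<\omega_1\}$ is an uncountable subspace which is right-separated: each point has a neighbourhood missing all later points. A right-separated space of uncountable length is not Lindel\"of, because the cover $\{U_\beta\}$ has no countable subcover. This contradicts hereditary Lindel\"ofness. Hence some $\beta<\omega_1$ satisfies $L^{(\beta+1)}=L^{(\beta)}$, and therefore $\alpha<\omega_1$. (An equivalent route: $\{L\setminus L^{(\beta)}\}$ is an increasing chain of open sets, and a hereditarily Lindel\"of space admits no strictly increasing $\omega_1$-chain of open sets.)

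\emph{Step 2: countability of $L\setminus L^{(\alpha)}$.} Write
\[
L\setminus L^{(\alpha)}=\bigcup_{\beta<\alpha}\bigl(L^{(\beta)}\setminus L^{(\beta+1)}\bigr).
\]
Each set $L^{(\beta)}\setminus L^{(\beta+1)}$ is the set of isolated points of $L^{(\beta)}$, so it is a discrete subspace. A discrete subspace of a hereditarily separable (or hereditarily Lindel\"of) space is countable. Since $\alpha<\omega_1$, the union is a countable union of countable sets and is therefore countable.

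The main point needing care is Step 1, namely producing the right-separated $\omega_1$-sequence and observing that it violates hereditary Lindel\"ofness. Step 2 is routine. Limit stages cause no trouble, because Step 1 only uses successor differences.
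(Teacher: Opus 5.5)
Your proof is correct and is the argument the paper intends. The paper gives no details beyond noting that the corollary follows easily from Proposition \ref{prop_properties_K_A}. You supply them: hereditary Lindel\"ofness rules out a right-separated $\omega_1$-sequence, so the derivatives stabilise below $\omega_1$, and it also forces each level $L^{(\beta)}\setminus L^{(\beta+1)}$ of isolated points to be countable.
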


If a separable compact line is perfect (without isolated points), we can modify it by a homeomorphism to a more convenient form.

\begin{proposition}\label{prop_no_isolated_points}
Let $L$ be a separable compact line. If $L$ has no isolated points, then there exists $B\subseteq (0,1)$ such that $L$ is order isomorphic (hence homeomorphic) to $I_B$.
\end{proposition}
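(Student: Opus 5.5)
By Theorem \ref{int:1} we may assume $L = K_A$ for a closed set $K \sub [0,1]$ and $A \sub K$. The plan is to modify $K$ and $A$, without changing the order type of $K_A$, until $K = I$ and $A \sub (0,1)$. The key tool is a monotone collapsing map $\phi\colon K \to [0,1]$, built from the "gaps" of $K$.

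\textbf{First step: understand the failure of $K=I$.} Since $K$ is closed, $[0,1]\sm K$ is a countable union of disjoint open intervals. Each bounded complementary interval $(a,b)$, with $a,b \in K$, corresponds to a jump in $K \times \{0\}$. The absence of isolated points in $K_A$ forces the following.
\begin{itemize}
\item If $a \in A$, then $(a,1)$ would be the immediate successor of $(a,0)$. It would also have immediate successor $(b,0)$, so $(a,1)$ would be isolated. Hence $a \notin A$, and $(a,0)$, $(b,0)$ form a jump in $K_A$.
\item For $x \in A$, the pair $(x,0) < (x,1)$ is always a jump.
\end{itemize}
Isolated points of $K$ itself are excluded similarly. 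For an isolated $x \in K$, the point $(x,0)$ is isolated unless it is the right end of a jump, and such degenerate situations must be checked by a short case analysis. I would also normalize the endpoints: assume $0 = \min K$ and $1 = \max K$ by an affine rescaling, and remove $\max K$ from $A$ if it lies there. The point $(\max K,1)$ would be isolated from the left only if a jump precedes it, so one more small case check is needed.

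\textbf{Second step: collapse each gap.} Let $\mu$ be Lebesgue measure, and define
\[\psi(x) = \mu([0,x]\cap K) + \sum\{2^{-n}\colon \text{the $n$-th gap lies below } x\}\]
using some enumeration of the gaps. Then $\psi$ is strictly increasing on $K$ but still jumps at the gaps. Instead I take
\[\phi(x) = \mu([0,x]\cap K) + \sum_{x_n < x} 2^{-n},\]
where $x_n$ ranges over an enumeration of the countable set $Q$ of points of $K$ that are left endpoints of gaps or lie in $A$ in a controlled way. It is cleaner to argue order-theoretically, via Cantor's characterization, which I would adopt as the main approach:
\begin{itemize}
\item The set $K_A$ with every jump identified to a single point is a separable, complete, dense linear order with endpoints, hence order isomorphic to $I$ by a map $h$.
\item Each point of the quotient is the image of either one point or one jump of $K_A$.
\item Let $B$ be the set of $h$-images of the jumps, excluding the endpoints.
\item Then $K_A$ is order isomorphic to $I_B$: send a jump $\{p<q\}$ to $(h,0),(h,1)$, and send every other point $p$ to $(h(p),0)$.
\end{itemize}

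\textbf{Third step: verify the quotient.} I must check that the quotient is dense, complete and separable.
\begin{itemize}
\item \emph{Density:} between two distinct classes there must be a third. Otherwise two consecutive classes would produce an isolated point, or a chain of three consecutive points, which again gives an isolated point.
\item \emph{Completeness:} this is inherited from compactness of $K_A$.
\item \emph{Separability:} the image of a countable dense set is dense.
\item \emph{No endpoint in $B$:} no jump lies at the endpoints, because $\min$ and $\max$ would then be isolated.
\end{itemize}

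The main obstacle is showing that jumps are disjoint and never chain, i.e.\ that no point has both an immediate predecessor and an immediate successor. Such a point would be isolated in the order topology, which gives exactly this. The remaining work is the bookkeeping that the map to $I_B$ is order preserving and bijective.
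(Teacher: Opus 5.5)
Your adopted order-theoretic argument is correct and is essentially the paper's proof. The paper likewise collapses each jump to a point: it works in $K$, identifies the endpoints $a_n\sim b_n$ of the complementary intervals, recognizes the quotient as a connected compact metrizable line (hence $[0,1]$), and takes $B=h[q[A\cup\{a_n : n\in N\}]]$, which is exactly your set of images of jumps of $L$. Your version via Cantor's characterization does not even need the representation $L=K_A$. The first step and the $\psi$/$\phi$ detour play no role and are partly garbled: $A$ need not be countable, and $\max K\notin A$ holds automatically, so nothing is removed. They should simply be deleted.
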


\begin{proof}
Without loss of generality, we can assume that $L = K_A$, where $A \sub K \sub I$, and $K$ is a closed set such that $\inf K = 0$ and $\sup K = 1$.

Enumerate the family of all connectedness components of $I \sm K$ as $\{(a_n, b_n) : n \in N\}$, where $N = \omega$ or $N\in \omega$. Observe that, for each $n \in N$, the set $\{a_n,b_n\}$ is disjoint from $A$, since $K_A$ has no isolated points.

Now consider an equivalence relation on $K$ defined by $a_n \! \sim b_n$ for all $n\in N$ and let $q : K \to K/\!\!\sim$ be the quotient map.
Notice that the relation $\!\sim$ is a closed relation (see \cite{En89}), as every increasing or decreasing sequence of $a_n$'s will have the same limit as a sequence of $b_n$'s with the same indicies. 
One can easily check that the resulting quotient space $K /\!\! \sim$ is a linearly ordered connected metrizable compact space, hence it is order isomorphic to the unit interval (cf.\ \cite[6.3.2]{En89}). 
Denote by $h : K/\!\!\sim \ \to I$ the order isomorphism satisfying $h(q(0)) = 0$.
One can verify that $K_A$ can be identified with $I_B$ where $B = h[q[A \cup \{a_n : n \in N\}]]$.
\end{proof}

\begin{proposition} \label{prop_zero_dim_no_isolated_points}
Let $L$ be a separable zero-dimensional compact line. Let $C \sub I$ be the ternary Cantor set. If $L$ has no isolated points, then there exists $B\subset C$ such that $L$ is order isomorphic (hence homeomorphic) to $C_B$.
\end{proposition}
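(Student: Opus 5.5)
Mirroring the proof of Proposition \ref{prop_no_isolated_points}, we start from $L = K_A$ with $A \sub K \sub I$ closed, and we may assume $\inf K = 0$, $\sup K = 1$. We then build an order-preserving map that collapses the structure of $K_A$ onto a suitable subset of the Cantor set $C$. Since $L$ has no isolated points, the gaps of $K$ are not adjacent to $A$. In addition, $K$ has no isolated points: an isolated $x\in K$ that is not in $A$ would give an isolated point $(x,0)$ of $K_A$, while $x\in A$ would make $(x,0)$ or $(x,1)$ isolated. So $K$ is perfect. Zero-dimensionality of $L$ means that between any two points of $L$ there is a jump, i.e.\ a pair of consecutive points. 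The jumps of $K_A$ are of two kinds: the pairs $(x,0)<(x,1)$ with $x\in A$, and the pairs $(a_n,0)<(b_n,0)$ coming from the components $(a_n,b_n)$ of $I\sm K$. The set $J = A\cup\{a_n : n\in N\}$ of left endpoints of jumps is therefore order-dense in $K$: for any $x<y$ in $K$ there is $z\in J$ with $x\le z<y$.

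The main step is to produce an order isomorphism from $K_A$ onto $C_B$. First, as in the proof of Proposition \ref{prop_no_isolated_points}, we form the quotient $K/\!\sim$ identifying $a_n\sim b_n$ and obtain an order isomorphism $h: K/\!\sim\ \to I$ with $h(q(0)) = 0$. This identifies $K_A$ with $I_{B'}$, where $B' = h[q[J]]$ is a dense subset of $(0,1)$. It then suffices to prove the claim for $I_{B'}$ with $B'$ dense in $(0,1)$.

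Next, we take a countable dense set $D\sub B'$ and an order isomorphism $\phi$ from $D$ onto the set $E$ of left endpoints of the removed middle-third intervals of $C$. Such a $\phi$ exists by Cantor's back-and-forth theorem, since both sets are countable dense orders without endpoints. We extend $\phi$ to an increasing homeomorphism $\Phi: I\to I$ (by continuity, using density of $D$ and of $E$ in $I$ in the appropriate sense). Equivalently, we build the standard monotone map $g: C\to I$ that collapses each gap pair $e<e'$ of $C$ with $e\in E$, and arrange that $g$ sends $E$ onto $D$. Then $C_{\emptyset}$ with the pairs $e\sim e'$ glued is order isomorphic to $I$. We set $B = g^{-1}[B'\sm D]$, a subset of $C\sm(E\cup E')$ on which $g$ is injective. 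Finally, we define $\Psi: I_{B'}\to C_B$ by
\[
\Psi(x,i) =
\begin{cases}
(g^{-1}(x),0), & x\notin D,\ i=0,\\
(g^{-1}(x),1), & x\in B'\sm D,\ i=1,\\
(e,0) \text{ and } (e',0) \text{ respectively}, & x=g(e)\in D,\ i=0,1.
\end{cases}
\]
One checks that $\Psi$ is an order-preserving bijection, hence a homeomorphism.

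The main obstacle is the bookkeeping in this last step. We must verify that the map $g$ exists with $g[E]=D$ exactly, and that $\Psi$ is onto $C_B$. In particular, every point of $C$ must be hit: the right endpoints $e'$ correspond to $(g(e),1)$, and all remaining points of $C$ correspond to points of $I\sm D$. For the existence of $g$, the first thing to try is to define $g$ as the unique monotone extension of $\phi^{-1}$ from $E$ to $C$, setting $g(c) = \sup\{\phi^{-1}(e) : e\in E,\ e\le c\}$. Continuity and surjectivity of this $g$ should follow from density of $D$ in $I$ together with density of $E$ in $C$.
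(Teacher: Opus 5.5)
Your proof is correct and takes the same approach as the paper: reduce to $I_{B'}$ with $B'$ dense in $(0,1)$, choose a countable dense $D\subseteq B'$, match the jumps over $D$ order-isomorphically with the gaps of $C$, and extend. The paper indexes both the dense set and the gaps by $2^{<\omega}$ where you invoke back-and-forth. Your explicit map $g(c)=\sup\{\phi^{-1}(e): e\in E,\ e\le c\}$, with $g(0)=0$, does work: it is continuous, monotone and onto $I$, and it identifies exactly the pairs $e<e'$. It is a careful version of the paper's "extend by density".

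The one slip is the aside about extending $\phi$ to an increasing homeomorphism $\Phi\colon I\to I$. Since $E$ is not dense in $I$, this is wrong as stated. Nothing in your argument depends on it, because you actually use $g$.
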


\begin{proof}
By Proposition \ref{prop_no_isolated_points}, we can assume that $L = I_A$ for some $A \sub I$. As $L$ is zero-dimensional, $A$ is dense in $I$. 
We can find a copy of the dyadic numbers $Q = \{a_s : s \in 2^{<\omega}\}$ dense in $A$, so that $a_s < a_{s'}$ (in $A$) if and only if $s < s'$ (in $2^{<\omega}$ with the lexicographical order). 
Let $\{(q_s^0, q_s^1) \sub I : s\in 2^{<\omega}\}$ be the usual, order preserving enumeration of the intervals removed during the construction of the Cantor set $C \sub I$. 

Define a function $h : Q \times \{0, 1\} \to C$ by 
\[h(a_s, b) = q_s^b\]
for $s \in 2^{<\omega}, b\in \{0, 1\}$. Since $h$ is order-preserving (with respect to the lexicographic order on $Q \times \{0, 1\}$), $Q$ is dense in $I$, and $\{q_s^0, q_s^1 : s\in 2^{<\omega}\}$ is dense in $C$, we can extend $h$ to an order isomorphism $\wt{h} : I_A \to C_B$, where $B\times \{1\} = \wt{h}\big[(A \sm Q\big) \times \{1\}]$. 
\end{proof}

\section{General properties of spaces \texorpdfstring{$C(K_A)$}{C(K A)}} \label{sec_properties_C(K_A)}

For a closed subset $F$ of a compact space $K$, we denote by $C_0(K\|F)$ the subspace $\{f \in C(K) : f|F \equiv 0\}$ of $C(K)$. 

\begin{proposition}
Let $F$ be a closed subset of a Fr\'echet compact space $K$, such that $K\setminus F$ is infinite. Then $C_0(K\|F)\simeq C(K/F)$.
\end{proposition}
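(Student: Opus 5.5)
The plan is to reduce the statement to Fact \ref{pre: C(K) iso to its hyperplanes}, in the same way as in Lemma \ref{hyper}. Let $q\colon K\to K/F$ be the quotient map, and write $p=q[F]$ for the point to which $F$ is collapsed. If $F=\emptyset$, then $C_0(K\|F)=C(K)$; moreover $K/F$ is just $K$ with one isolated point added, so $C(K/F)\simeq C(K)\oplus\mathbb{R}$. That case is then handled by the same hyperplane fact, provided $K$ has a convergent sequence, which we verify below. So assume $F\neq\emptyset$.

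\textbf{Step 1.} The map $f\mapsto f\circ q$ is an isometry of $C(K/F)$ onto the space of functions in $C(K)$ that are constant on $F$. Under this isometry, $C_0(K\|F)$ corresponds to $\{g\in C(K/F): g(p)=0\}$. This is the kernel of the functional $\delta_p$, hence a hyperplane of $C(K/F)$.

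\textbf{Step 2.} By Fact \ref{pre: C(K) iso to its hyperplanes}, it is enough to show that $K/F$ contains a nontrivial convergent sequence. This is the only step that uses the hypotheses, and I expect it to be the main obstacle, although it is mild.

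The space $K/F$ is compact Hausdorff, since $F$ is closed and $K$ is normal. Also, $K/F\setminus\{p\}$ is homeomorphic to the open set $U=K\setminus F$, which is infinite. Hence the compact space $K/F$ is infinite, and so it has a non-isolated point $x$.

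\emph{Case $x\in U$.} Then $x$ is a non-isolated point of $K$, and $x\in\overline{U\setminus\{x\}}$. Since $K$ is Fr\'echet, some sequence of points of $U\setminus\{x\}$ converges to $x$ in $K$. Because $U$ is open, $q$ restricted to $U$ is a homeomorphism onto an open subset of $K/F$, so the image sequence is nontrivial and converges in $K/F$.

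\emph{Case $x=p$.} Then $p$ is in the closure of $q[U]$, so $F\cap\overline{U}\neq\emptyset$. Pick $y\in F\cap\overline{U}$. By the Fr\'echet property there is a sequence $u_n\in U$ with $u_n\to y$. We may take the $u_n$ pairwise distinct, since every point of $U$ lies outside the closed set $F\ni y$, so no term can repeat infinitely often. By continuity of $q$, $q(u_n)\to q(y)=p$, and the points $q(u_n)$ are distinct and different from $p$.

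In either case $K/F$ contains a nontrivial convergent sequence. The same argument shows that $K$ itself has one, which covers the case $F=\emptyset$.

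Combining Steps 1 and 2 with Fact \ref{pre: C(K) iso to its hyperplanes} gives $C_0(K\|F)\simeq C(K/F)$.
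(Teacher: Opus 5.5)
Your proof is correct and follows the paper's argument: you identify $C_0(K\|F)$ with the hyperplane $\ker\delta_p$ of $C(K/F)$ and then apply Fact~\ref{pre: C(K) iso to its hyperplanes}. Your two cases, a non-isolated point of $q[K\setminus F]$ and the collapsed point $p$, fill in the details behind the paper's one-line claim that the Fr\'echet property and the infiniteness of $K\setminus F$ give a nontrivial convergent sequence in $K/F$.
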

    
\begin{proof}
The space $C_0(K\|F)$ is isomorphic to a hyperplane of $C(K/F)$. 
From our hypothesis, it follows that $K/F$ contains a nontrivial convergent sequence, thus the statement follows from Fact \ref{pre: C(K) iso to its hyperplanes}.
\end{proof}

\begin{corollary}\label{cor_C(K,F)}
Let $F$ be a closed subset of a separable compact line $K$.
Then there is a compact space $L$ such that $C_0(K\|F)\simeq C(L)$.
\end{corollary}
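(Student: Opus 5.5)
The plan is to reduce to the preceding Proposition, which needs only two things: that $K$ is Fréchet and that $K\setminus F$ is infinite. By Proposition \ref{prop_properties_K_A}, every separable compact line is Fréchet, so the first condition always holds. The argument therefore splits into two cases, according to whether $K\setminus F$ is infinite or finite.

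\emph{Case 1: $K\setminus F$ is infinite.} The preceding Proposition applies directly and gives $C_0(K\|F)\simeq C(K/F)$. We take $L=K/F$. This space is compact Hausdorff, since $F$ is closed in the compact Hausdorff space $K$. (If $F=\emptyset$, we read $K/F$ as $K$, and trivially $C_0(K\|\emptyset)=C(K)$.)

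\emph{Case 2: $K\setminus F$ is finite, say of cardinality $n$.} Since $F$ is closed, every point of $K\setminus F$ is an isolated point of $K$. The restriction map $f\mapsto f|_{K\setminus F}$ is then a linear isometry of $C_0(K\|F)$ onto the space of all real functions on the finite set $K\setminus F$. Indeed, any function on $K\setminus F$, extended by $0$ on $F$, is continuous, because $K\setminus F$ is clopen. Hence $C_0(K\|F)\simeq \mathbb{R}^n$ with the supremum norm, which is $C(L)$ for $L$ the discrete space of $n$ points. When $n=0$ we get the zero space, which we may regard as $C(\emptyset)$; if empty compacta are to be excluded, this degenerate case is simply set aside.

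No real obstacle is expected. The only point needing care is the finite case, in particular the convention for $n=0$. Apart from that, the statement follows immediately from the preceding Proposition together with the Fréchet property in Proposition \ref{prop_properties_K_A}.
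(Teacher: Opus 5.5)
Your proof is correct and matches the paper's argument: when $K\setminus F$ is infinite you apply the preceding Proposition, with the Fréchet property supplied by Proposition \ref{prop_properties_K_A}, and otherwise $C_0(K\|F)$ is finite-dimensional and equals $C(L)$ for a finite discrete $L$. You also spell out the details the paper leaves implicit (the restriction isometry in the finite case, and the edge cases $F=\emptyset$ and $F=K$), which is fine.
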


\begin{proof}
If $K\setminus F$ is infinite, then we can apply the above proposition. Otherwise, the space $C_0(K\|F)$ is finite-dimensional, and we can take as $L$ a discrete finite space of appropriate size.
\end{proof}

The next result follows from a theorem of Heath and Lutzer \cite[Theorem 2.4]{HL74} which provides the existence of an extension operator $T : C(L) \to C(K)$ for compact lines $L\sub K$ (see \cite[Lemma 4.3]{Ma08} for a self-contained proof).

\begin{lemma} [Marciszewski, \cite{Ma08}*{Lemma 4.3}] \label{lem: decomposition through a closed subspace}
Let $K_A$ be a separable compact line. For each nonempty closed subset $F$ of $K_A$, the space $C(K_A)$ is isomorphic to $C(F)\oplus C_0(K_A\|F)$.
\end{lemma}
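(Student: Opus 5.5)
The plan is to build a bounded linear extension operator $E : C(F) \to C(K_A)$, that is, one with $(Ef)|F = f$ for all $f\in C(F)$, and then split $C(K_A)$ along the restriction map. Once $E$ exists, set $R : C(K_A)\to C(F)$, $R(g) = g|F$. Then $P = E\circ R$ is a bounded projection on $C(K_A)$. Its range is $E[C(F)]$, which is isomorphic to $C(F)$ because $R\circ E = \mathrm{id}$. Its kernel is exactly $C_0(K_A\|F)$. Hence $C(K_A)\simeq C(F)\oplus C_0(K_A\|F)$. Everything therefore reduces to producing $E$.

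To construct $E$, I will use the order structure of $K_A$. The complement $K_A\setminus F$ is open in a compact line, so it is a disjoint union of maximal convex open components (order-intervals). By Proposition \ref{prop_properties_K_A}, $K_A$ is hereditarily Lindel\"of, so there are only countably many such components. Each component $U$ has one of three forms:
\begin{itemize}
\item $U=(p,q)$ with $p,q\in F$ (a gap between two points of $F$);
\item $U=[\min K_A, q)$ with $q\in F$;
\item $U=(p,\max K_A]$ with $p\in F$.
\end{itemize}
In the last two cases I define $Ef$ on $U$ as the constant $f(q)$, respectively $f(p)$. In the gap case I need a continuous function on $\overline U=[p,q]$ equal to $f(p)$ at $p$ and $f(q)$ at $q$. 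Since $[p,q]$ is a compact line and a separable compact line is normal and zero-dimensional-or-not, I will fix for each gap a continuous $\phi_U:[p,q]\to[0,1]$ with $\phi_U(p)=0$ and $\phi_U(q)=1$, for instance by Urysohn. I then put
\[
Ef = (1-\phi_U)f(p)+\phi_U f(q)
\]
on $U$, and $Ef=f$ on $F$. This $E$ is linear, it has norm $1$, and it extends $f$.

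The main obstacle is continuity of $Ef$ at points $x\in F$ that are limits of infinitely many gaps. Take such an $x$ approached from the left by gaps $(p_n,q_n)$ with $q_n\to x$ and $p_n\to x$. On each such gap the values of $Ef$ lie between $f(p_n)$ and $f(q_n)$, and both of these tend to $f(x)$ because $f$ is continuous on $F$. So continuity follows from the convex-combination form of the extension, and it does not depend on the particular choice of the $\phi_U$. At points $x\in F$ that are endpoints of a single gap, continuity holds because $\phi_U$ is continuous at that endpoint. At points of $U$ itself, $Ef$ is locally a continuous function. I will check these cases using first countability, which makes the Fr\'echet property of Proposition \ref{prop_properties_K_A} available, so that sequential arguments suffice. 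This completes the construction of $E$ and hence the decomposition. Alternatively, one may simply invoke Heath--Lutzer's extension theorem \cite[Theorem 2.4]{HL74}, which provides the required operator $E$ directly.
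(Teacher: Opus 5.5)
Your proof is correct and takes essentially the paper's approach. The paper also derives the decomposition from a norm-bounded extension operator $C(F)\to C(K_A)$, cited from Heath--Lutzer, which composed with restriction gives a projection with kernel $C_0(K_A\|F)$; your construction of that operator, interpolating convexly across the complementary intervals of $F$ and getting continuity at points of $F$ from the convex-combination bound, is a valid self-contained substitute for the citation.
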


The following result is a more general version of \cite[Lemma 4.6]{Ma08}. Note that it is true in ZFC, without any additional set-theoretic hypothesis. The proof of this lemma follows closely the argument in \cite{Ma08}, we present it here for the convenience of readers. 

\begin{lemma} \label{lem: C(KA) and C(KA) x C(M) are isomorphic}
For every uncountable separable compact line $K_A$ and every nonempty metrizable compact space $M$ the spaces $C(K_A)$ and $C(K_A) \oplus C(M)$ are isomorphic.
\end{lemma}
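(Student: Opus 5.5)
The plan is a Pełczyński-type decomposition argument. I will first show that $C(K_A)$ contains a complemented copy of $C([0,1])$, or better of $C(2^\omega)$. Then I will absorb $C(M)$ into that copy using Miljutin's theorem.

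\textbf{Step 1: find a Cantor set.} Since $K_A$ is uncountable, Corollary \ref{cor_properties_K^(alpha)} gives a nonempty perfect kernel $P = L^{(\alpha)}$, and $P$ is a closed subset of $K_A$. By Proposition \ref{prop_properties_K_A}, $P$ is itself a separable compact line with no isolated points. By Proposition \ref{prop_no_isolated_points}, $P$ is order isomorphic to some $I_B$.

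Now take the natural monotone map $I_B \to I$. Using a Cantor set $C \subseteq I$ that avoids the countable set of left endpoints, I would find a closed subset $F \subseteq P$ homeomorphic to the Cantor set $2^\omega$. Concretely, inside $I_B$, take a Cantor set $C\subseteq I$ with $C\cap B$ handled so that $C\times\{0\}\cup (C\cap B)\times\{1\}$ is a closed metrizable copy. The simplest route is to choose $C$ with $C\cap B$ countable, which is impossible in general, so I will not rely on that. Instead I use that $F=C_{C\cap B}$ is a closed subset of $K_A$. If $C \cap B$ is countable, then $F$ is metrizable, compact and uncountable.

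To guarantee countability I would alternatively pick $F$ directly: the quotient $I_B\to I$ has uncountable image, and I can pick a closed subset of $K_A$ mapping onto a Cantor set with fibers mostly singletons. Precisely, I take $F$ to be the closure of the set of points $(x,0)$, $x\in C$, that are limits from the left. This step, producing an uncountable compact \emph{metrizable} closed subset $F$ of $K_A$, is the main technical obstacle. The fallback is to observe that one only needs $F$ closed with $C(F)\simeq C(2^\omega)$. For that it suffices that $F$ be any uncountable metrizable compactum, by Miljutin's theorem.

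\textbf{Step 2: decompose.} Apply Lemma \ref{lem: decomposition through a closed subspace} to get $C(K_A)\simeq C(F)\oplus C_0(K_A\|F)$. By Miljutin's theorem, $C(F)\simeq C(2^\omega)$. The standard facts $C(2^\omega)\simeq C(2^\omega)\oplus C(M)$ for every nonempty metrizable compact $M$ also follow from Miljutin's theorem when $M$ is uncountable. When $M$ is countable, $C(M)$ is a complemented subspace of $C(2^\omega)$ (again via Lemma \ref{lem: decomposition through a closed subspace}, or Borsuk–Dugundji extension). Hence $C(2^\omega)\simeq C(M)\oplus X$ for some $X$, and then
\[C(2^\omega)\oplus C(M)\simeq (c_0\text{-sum of } C(2^\omega))\oplus C(M)\]
via the Pełczyński decomposition. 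This uses $C(2^\omega)\simeq C(2^\omega\times\omega+1)$, which is isomorphic to the $c_0$-sum of copies of $C(2^\omega)$ up to a hyperplane, and Fact \ref{pre: C(K) iso to its hyperplanes}.

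\textbf{Step 3: conclude.} Combining the steps,
\[C(K_A)\oplus C(M)\simeq C(F)\oplus C(M)\oplus C_0(K_A\|F)\simeq C(F)\oplus C_0(K_A\|F)\simeq C(K_A).\]
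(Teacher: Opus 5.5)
\textbf{Gap in Step 1.} Your argument depends on Step 1, and Step 1 is false in general: an uncountable separable compact line need not contain any uncountable metrizable closed subset. Let $F\subseteq K_A$ be closed and metrizable, hence second countable. For each $x\in A$ with $(x,0)\in F$, the set $F\cap\{p\in K_A: p<(x,1)\}$ is open in $F$ and has maximum $(x,0)$. So some member of a fixed countable base of $F$ has $(x,0)$ as its maximum. Symmetrically, each $(x,1)\in F$ is the minimum of a basic set. Hence $p_1(F)\cap A$ is countable.

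This has concrete consequences. For the double arrow space $I_{(0,1)}$, every metrizable closed subset is countable. The same happens for $I_A$ whenever $A$ meets every Cantor set in an uncountable set, for instance a Bernstein set under \textsf{CH}, which already gives weight $\omega_1$. Your construction does work when some Cantor set $C\subseteq K$ has $C\cap A$ countable, e.g.\ when $|A|<2^\omega$. It does not work in ZFC for all $K_A$.

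Your fallback does not rescue the step. $C(F)\simeq C(2^\omega)$ forces $C(F)$ to be separable, hence $F$ metrizable. It also forces $F$ to be uncountable, since for countable $F$ the dual $C(F)^*=\ell_1(F)$ is separable while $C(2^\omega)^*$ is not. So no argument that restricts to a closed subset and applies Lemma~\ref{lem: decomposition through a closed subspace} can produce a complemented copy of $C(2^\omega)$ inside $C(I_{(0,1)})$.

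\textbf{The missing idea.} You need a way to obtain $C(2^\omega)$ as a complemented subspace that does not come from a closed metrizable subspace. The paper uses a regular averaging operator for a continuous surjection onto a Cantor set:
\begin{itemize}
\item take a Cantor set $C\subseteq K$ and the closed subline $C_{C\cap A}\subseteq K_A$, which is generally nonmetrizable;
\item fix a homeomorphism $h\colon C\to C_1\times C_2$;
\item set $T(f)(x)=\int_{C_2} f(h^{-1}(x,y),0)\,d\mu(y)$.
\end{itemize}
$T(f)$ is continuous because $t\mapsto f(t,0)$ has only countably many discontinuities. Their image in $C_2$ is $\mu$-null, so dominated convergence applies.

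Some smoothing of this kind is unavoidable. The projection $p_1\colon I_{(0,1)}\to I$ itself admits no regular averaging operator. The averaging measure at $x$ would have to be $\delta_{(x,0)}$ (forced by points approaching from the left) and also $\delta_{(x,1)}$ (forced from the right).

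Once $C(2^\omega)\,|\,C(K_A)$ is established, your Steps 2--3 go through. The absorption $C(2^\omega)\oplus C(M)\simeq C(2^\omega\sqcup M)\simeq C(2^\omega)$ follows directly from Miljutin's theorem, so the Pe\l czy\'nski decomposition detour is unnecessary.
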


\begin{proof}
First, let us show that $C(2^\omega)|C(K_A)$.

As $K$ is an uncountable closed subset of the unit interval, we can find a copy of the Cantor set $C \sub K$. 
Put $B = C \cap A$ and see that $C_B$ is a closed subspace of $K_A$. By Lemma \ref{lem: decomposition through a closed subspace}, we have $C(C_B)|C(K_A)$. 
Now, it is sufficient to show that $C(2^\omega)|C(C_B)$.

Write $p_1 : C_B \to C$ for the projection.
Fix a homeomorphism $h : C \to C_1 \times C_2$, where both $C_1, C_2$ are copies of $2^\omega$ and by $\pi_1, \pi_2$ denote the projections from $ C_1 \times C_2$ onto $C_1, C_2$, respectively.

Our goal is to construct a regular averaging operator for the map 
\[\phi = \pi_1 \circ h \circ p_1 : C_B \to C_1,\]
i.e., a positive linear operator $T : C(C_B) \to C(C_1)$ satisfying $T (1_{C_B}) = 1_{C_1}$ and $T(g \circ \phi) = g$ for every $g \in C(C_1)$.
The existence of such an operator implies that $C(C_1)|C(L)$, see \cite[Proposition 8.2]{Pe68}.

The set $C_2$ is a copy of $2^\omega$, so there is a standard product measure on $C_2$, denote it by $\mu$. Define the operator $T : C(C_B) \to C(C_1)$ by the formula
\[T(f)(x) = \int_{C_2} f(h^{-1}(x, y), 0) \diff \mu(y)\]
for $f \in C(C_B)$ and $x \in C_1$. 
It is a well-known fact that, for every function $f \in C(C_B)$, the function $f_0(t) = f(t, 0)$ has at most countably many points of discontinuity (see e.g.\ \cite[Proof of Thm.\ 3.7]{Ma08}) and is therefore measurable. 
It follows that $T(f)(x)$ is well defined for every $x\in C_1$, it is also clear that $T$ is positive and  $T (1_{C_B}) = 1_{C_1}$.
Additionally, for every $g \in C(C_1)$, $x\in C_1$ and $y \in C_2$ we have $g \circ \phi(h^{-1}(x, y), 0) = g(x)$, hence $T(g \circ \phi) = g$. 

It remains to check that $T(f)$ is continuous on $C_1$ for every $f \in C(C_B)$. Consider any convergent sequence $x_n \to x$ in $C_1$. Then for every $y \in C_2$ we have $h^{-1}(x_n, y) \to h^{-1}(x, y)$, as $h$ is a homeomorphism.
Consider again the function $f_0(t) = f(t, 0)$ and let $P$ be the set of points of discontinuity of $f_0$ ($P$ is countable).
Denote $f_n(y) = f_0(h^{-1}(x_n, y))$ and $\widetilde{f}(y) = f_0(h^{-1}(x, y))$.
If $R = \pi_2 \circ h(P)$, then for every point $y \in C_2 \sm R$ we have $f_n(y) \to \widetilde{f}(y)$, thus the sequence $(f_n)_{n\in\omega}$ is $\mu$-a.e. convergent to $\widetilde{f}$ on $C_2$ (note that $\mu(R) = 0$, as $R$ is countable and the measure $\mu$ is nonatomic).
As all functions $|f_n|$ are bounded by the norm of $f$, from the Lebesgue dominated convergence theorem it follows that $T(f)(x_n) \to T(f)(x)$.

Now consider any nonempty metrizable compact space $M$.
It is a well-known fact that $C(M) \oplus C(2^\omega) \simeq C(2^\omega)$ (e.g. by Miljutin's theorem).
Since $C(2^\omega)$ is a factor of $C(K_A)$, it follows that $C(M) \oplus C(K_A) \simeq C(K_A)$, as
\[C(M) \oplus C(K_A) \simeq C(M) \oplus C(2^\omega) \oplus E \simeq C(2^\omega) \oplus E \simeq C(K_A),\]
for $E$ such that $C(K_A) \simeq C(2^\omega) \oplus E$.
\end{proof}

\begin{corollary}\label{cor: C(L) and C(L^(alpha)}
Let $L$ be an uncountable separable compact line and $\alpha = \min\{\beta: L^{(\beta+1)} = L^{(\beta)}\}$. Then the spaces $C(L)$ and $C(L^{(\alpha)})$ are isomorphic.
\end{corollary}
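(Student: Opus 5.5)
The plan is to split off $L^{(\alpha)}$ using Lemma \ref{lem: decomposition through a closed subspace}, show that the complementary summand is the space of continuous functions on a countable compact space, and absorb that summand using Lemma \ref{lem: C(KA) and C(KA) x C(M) are isomorphic}.

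Put $F = L^{(\alpha)}$. Since $L^{(\alpha+1)} = L^{(\alpha)}$, the set $F$ is closed and has no isolated points. By Corollary \ref{cor_properties_K^(alpha)}, $L\setminus F$ is countable. As $L$ is uncountable, $F$ is uncountable, and in particular nonempty. By Proposition \ref{prop_properties_K_A}, closed subsets of separable compact lines are again separable compact lines. Hence $F$ is an uncountable separable compact line, i.e., of the form $K'_{A'}$.

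By Lemma \ref{lem: decomposition through a closed subspace}, applied to the closed subset $F$ of $L$, we get
\[C(L)\simeq C(F)\oplus C_0(L\|F).\]
It remains to identify $C_0(L\|F)$ with $C(M)$ for some metrizable compact $M$. There are two cases.

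If $L\setminus F$ is finite, then $C_0(L\|F)$ is finite dimensional. We take $M$ to be a finite discrete space of the right size, so $C_0(L\|F)\simeq C(M)$. (If $L\setminus F=\emptyset$, there is nothing to prove.)

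If $L\setminus F$ is infinite, then $L$ is Fréchet by Proposition \ref{prop_properties_K_A}, and the proposition preceding Corollary \ref{cor_C(K,F)} gives $C_0(L\|F)\simeq C(L/F)$. The quotient $L/F$ is a compact Hausdorff space, because $F$ is closed in a compact space. It has countably many points: the points of $L\setminus F$ together with the single point $F/F$. A countable compact Hausdorff space is metrizable; for instance, it is scattered and first countable, or one can invoke the classical fact that countable compacta are homeomorphic to countable ordinals. So we take $M=L/F$.

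In either case,
\[C(L)\simeq C(F)\oplus C(M)\]
with $M$ nonempty, compact and metrizable. Lemma \ref{lem: C(KA) and C(KA) x C(M) are isomorphic}, applied to the uncountable separable compact line $F$, then gives $C(F)\oplus C(M)\simeq C(F)$, so $C(L)\simeq C(L^{(\alpha)})$.

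The main point to check is the metrizability of the countable quotient $L/F$. It follows from the standard fact that a compact Hausdorff space with countably many points has countable weight. If needed, one can see this directly: each point of $L\setminus F$ has a countable base, since $L$ is first countable, and the point $F/F$ has a countable base because its complement is countable and open, so the space is countable and regular.
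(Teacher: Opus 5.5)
Your proof is correct and follows the paper's argument. Like the paper, you split off $L^{(\alpha)}$ via Lemma \ref{lem: decomposition through a closed subspace}, identify $C_0(L\|L^{(\alpha)})$ with $C(M)$ for a finite or countable, hence metrizable, compact $M$ (as in Corollary \ref{cor_C(K,F)}), and absorb this summand with Lemma \ref{lem: C(KA) and C(KA) x C(M) are isomorphic}; you also spell out two points the paper leaves tacit, namely that $L^{(\alpha)}$ is itself an uncountable separable compact line and the degenerate case $L = L^{(\alpha)}$.
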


\begin{proof}
By Corollary \ref{cor_properties_K^(alpha)} we have $|L\setminus  L^{(\alpha)}| \le \omega$. 
Let $K$ be a compact space given by Corollary \ref{cor_C(K,F)} such that $C_0(L\|L^{(\alpha)})\simeq C(K)$. 
Then either $K= L/L^{(\alpha)}$ or $K$ is finite, in both cases the space $K$ is metrizable since $L\setminus  L^{(\alpha)}$ is countable. 
Therefore, we can use Lemmas \ref{lem: decomposition through a closed subspace} and \ref{lem: C(KA) and C(KA) x C(M) are isomorphic} to conclude that
\[C(L)\simeq C(L^{(\alpha)})\oplus C_0(L\|L^{(\alpha)}) \simeq  C(L^{(\alpha)})\oplus C(K) \simeq C(L^{(\alpha)}).\qedhere\]
\end{proof}

The next theorem follows immediately from the corollary above and Proposition \ref{prop_no_isolated_points}.

\begin{theorem}\label{thm: C(K_A) sim C(I_B)}
Let $L$ be an uncountable separable compact line. Then there exists $A\subseteq (0,1)$ such that $C(L) \simeq C(I_A)$.
\end{theorem}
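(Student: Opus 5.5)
The plan is to combine Corollary \ref{cor: C(L) and C(L^(alpha)} with Proposition \ref{prop_no_isolated_points}, after checking that the perfect kernel is itself a separable compact line without isolated points.

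Let $L$ be an uncountable separable compact line. Put $\alpha = \min\{\beta: L^{(\beta+1)} = L^{(\beta)}\}$, which is countable by Corollary \ref{cor_properties_K^(alpha)}. Corollary \ref{cor: C(L) and C(L^(alpha)} then gives $C(L)\simeq C(L^{(\alpha)})$.

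Next we verify that $L^{(\alpha)}$ satisfies the hypotheses of Proposition \ref{prop_no_isolated_points}.

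\begin{enumerate}[(a)]
\item It is a closed subset of $L$. By the remark after Proposition \ref{prop_properties_K_A}, it is therefore again a separable compact line. This also needs the subspace topology on a closed subset of a compact line to coincide with the order topology, which is standard.
\item It is nonempty. By Corollary \ref{cor_properties_K^(alpha)}, $|L\setminus L^{(\alpha)}|\le\omega$, while $L$ is uncountable, so $L^{(\alpha)}$ is in fact uncountable.
\item It has no isolated points, since $L^{(\alpha+1)}=L^{(\alpha)}$ means that $L^{(\alpha)}$ equals its own derived set.
\end{enumerate}

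Proposition \ref{prop_no_isolated_points} now gives $A\subseteq(0,1)$ with $L^{(\alpha)}$ order isomorphic, hence homeomorphic, to $I_A$. Therefore
\[C(L)\simeq C(L^{(\alpha)})\simeq C(I_A).\]

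There is no real obstacle. The only point needing care is step (a), namely that the remark following Proposition \ref{prop_properties_K_A} applies so that $L^{(\alpha)}$ is a separable compact line. This holds because a closed subset of a hereditarily separable compact line is separable, and it is a compact line in the induced order.
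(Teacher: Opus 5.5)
Your proposal is correct and follows the paper's route. The paper derives the theorem immediately from the corollary giving $C(L)\simeq C(L^{(\alpha)})$ together with Proposition \ref{prop_no_isolated_points}, and your checks (that $L^{(\alpha)}$ is a nonempty, indeed uncountable, separable compact line in the induced order with no isolated points) spell out what the paper leaves implicit.
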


\section{Counting spaces \texorpdfstring{$C(K_A)$}{C(K A)} of weight \texorpdfstring{$\kappa$}{kappa} when \texorpdfstring{$2^{\kappa} > 2^\omega$}{pow(2, omega1) > pow(2, omega)}} \label{sec_counting_under_2^k>2^omega}

It is a well-known fact that, consistently (e.g.\ under \textsf{CH}), $2^{\omega_1}$ can be greater than $2^\omega$. 
The purpose of this Section is the proof of Theorem \ref{thm: Exactly 2^omega1 spaces}, which we will derive from a result obtained in \cite{CS+20}.

First, we count the number of potential isomorphisms between spaces $C(K_A)$. Let us recall the following, very general theorem due to Cabello-S\'{a}nchez, Castillo, Marciszewski, Plebanek and Salguero-Alarcón. 

\begin{theorem} \cite[Theorem 5.1]{CS+20} \label{thm: At most c isomorphic spaces}
Let $\cK$ be a family of compact spaces such that
\begin{itemize}
    \item $K$ is separable and $|M(K)| = 2^\omega$ for every $K \in \cK$;
    \item For every pair of distinct $K, L \in \cK$ one has $C(K) \simeq C(L)$ and $K, L$ are not homeomorphic.
\end{itemize}
Then $\cK$ is of cardinality at most $2^\omega$.
\end{theorem}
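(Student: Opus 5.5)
Fix an arbitrary $K_0\in\cK$; we may assume $\cK$ has at least two elements. The plan is to embed every $K\in\cK$ homeomorphically into the single space $(M(K_0),w^*)$, and then count the separable compact subsets of that space.

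\textbf{Step 1 (embedding).} Let $K\in\cK$. By the second hypothesis there is an isomorphism $T\colon C(K_0)\to C(K)$. Its adjoint $T^*\colon M(K)\to M(K_0)$ is a linear bijection, and it is weak$^*$-to-weak$^*$ continuous. It maps the norm-bounded set $\Delta_K=\{\delta_x: x\in K\}$ into a norm-bounded set. On bounded sets the weak$^*$ topology is compact Hausdorff, and $\Delta_K$ is weak$^*$ compact and homeomorphic to $K$. Hence $T^*|_{\Delta_K}$ is a continuous injection from a compact space into a Hausdorff space, so it is a homeomorphism onto its image. Thus $K$ is homeomorphic to the weak$^*$ compact set $Q_K=T^*[\Delta_K]\subseteq M(K_0)$.

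\textbf{Step 2 (counting).} Since $K$ is separable, $Q_K$ is separable. Choose a countable set $D_K\subseteq Q_K$ that is dense in $Q_K$. Because $Q_K$ is compact, hence weak$^*$ closed, we get $Q_K=\overline{D_K}^{w^*}$. So $Q_K$ is determined by the countable set $D_K\subseteq M(K_0)$. By the first hypothesis $|M(K_0)|=2^\omega$, so there are at most $(2^\omega)^\omega=2^\omega$ countable subsets of $M(K_0)$. Consequently the sets $Q_K$ range over at most $2^\omega$ subsets of $M(K_0)$.

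\textbf{Step 3 (conclusion).} The map $K\mapsto Q_K$, after choosing one $T$ for each $K$, is injective on $\cK$. Indeed, if $Q_K=Q_L$ then $K\cong Q_K=Q_L\cong L$, which contradicts the assumption that distinct members of $\cK$ are not homeomorphic. Hence $|\cK|\le 2^\omega$.

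The only point requiring care is Step 1, namely that $T^*$ restricted to $\Delta_K$ is a homeomorphism onto its image. This rests on the compact-to-Hausdorff argument together with the standard identification of $K$ with $\Delta_K$. Everything else is cardinal arithmetic.
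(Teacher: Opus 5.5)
Your proof is correct. The adjoint of an isomorphism $C(K_0)\to C(K)$ maps $\Delta_K\cong K$ homeomorphically onto a weak$^*$-compact subset of $M(K_0)$; this needs only that the weak$^*$ topology is Hausdorff, and for $K=K_0$ you can take the identity. Since $K$ is separable, that subset is the weak$^*$-closure of a countable set, and there are at most $(2^\omega)^\omega=2^\omega$ such closures. The paper gives no proof of its own and quotes the result from \cite{CS+20}; your embedding-and-counting argument is the standard proof of it.
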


\begin{proof}[Proof of Theorem \ref{thm: Exactly 2^omega1 spaces}]
It is a known fact that for any separable compact line $K_A$ we have $|M(K_A)| = 2^\omega$, see \cite{KMS01}. 

Due to Theorem \ref{thm: At least 2^omega1 spaces}, there exists a family $\mathcal{K}$ of size $2^{\kappa}$ consisting of pairwise nonhomeomorphic separable compact lines $K_A$ of weight $\kappa$. For a fixed $K_A\in \mathcal{K}$, by Theorem \ref{thm: At most c isomorphic spaces}, we have at most $2^\omega$ spaces $L_B \in \mathcal{K}$ such that  $C(K_A) \simeq C(L_B)$. Now, the statement of Theorem \ref{thm: Exactly 2^omega1 spaces}  easily follows from the hypothesis that $2^\kappa > 2^\omega$.
\end{proof}

\section{Counting spaces \texorpdfstring{$C(K_A)$}{C(K A)} under Baumgartner's Axiom} \label{sec_BA}

In this Section we will prove Theorem \ref{thm: Under BA there is one C(K_A) space}, i.e., we will show that consistently we can have a situation completely opposite to the one described by Theorem \ref{thm: Exactly 2^omega1 spaces}. 
The proof is presented at the end of this section, after all the necessary lemmas.

Note that by Theorem \ref{thm: At least 2^omega1 spaces} there exist (in \textsf{ZFC}) $2^{\omega_1}$ pairwise nonhomeomorphic separable compact lines of weight $\omega_1$.

Classical Cantor's isomorphism theorem states that all countable dense subsets of $\bR$ are order isomorphic. The same cannot be said about uncountable subsets, but, after adding a very natural condition, a similar statement can be proved to be consistent with \textsf{ZFC}. For formulating this statement we need the following definition.

\begin{definition}
Let $\kappa$ be an infinite cardinal. We say that a subset $A$ of a topological space $X$ is $\kappa$-dense in $X$, if for any nonempty open subset $U$ of $X$ we have $|U \cap A| = \kappa$.
\end{definition}

We will mainly use this definition in the case when $X$ is a subspace of the real line $\bR$, so we will consider only cardinals $\kappa \le 2^\omega$. 
Obviously, a subset $A$ of $\bR$ is $\kappa$-dense in $\bR$ if, for every nonempty open interval $I \sub \bR$, we have $|I \cap A| = \kappa$. We refer to the following statement about $\kappa$-dense sets in $\bR$ as Baumgartner's Axiom.\medskip

\begin{center}
\textsf{BA}($\kappa$): Every two sets of reals that are $\kappa$-dense in $\bR$ are order isomorphic.
\end{center}\medskip

This axiom was introduced as a natural consequence of a classical theorem of Baumgartner \cite{Ba73}, who proved that consistently all $\omega_1$-dense sets of reals are order isomorphic, i.e., using the above language he showed the following (later on, we will denote $\textsf{BA}(\omega_1)$ simply as $\textsf{BA}$).

\begin{theorem}[Baumgartner] \label{thm: All omega1-dense sets are isomorphic}
\normalfont{\textsf{BA}}$(\omega_1)$ is consistent with \textsf{ZFC}.
\end{theorem}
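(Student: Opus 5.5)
The plan is a finite-support iteration of ccc forcings that adds an order isomorphism for every pair of $\omega_1$-dense sets of reals; the real work is finding ccc posets that do this. We start from a ground model $V$ of $\textsf{GCH}$, in particular $2^\omega=\omega_1$ and $2^{\omega_1}=\omega_2$. We then build a finite support iteration $\langle P_\xi,\dot Q_\xi:\xi<\omega_2\rangle$ in which every iterand is ccc and of size at most $\omega_1$.

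\textbf{The iteration and bookkeeping.} Each $P_\xi$ is then ccc and of size at most $\omega_1$. So every $\omega_1$-dense set of reals in the final model $V^{P_{\omega_2}}$ has a nice name that already appears at some stage $\xi<\omega_2$. Using $2^{\omega_1}=\omega_2$ in $V$, a bookkeeping function will list all pairs $(\dot A,\dot B)$ of such names cofinally often. At stage $\xi$, $\dot Q_\xi$ will be a ccc poset adding an order isomorphism between $A$ and $B$. Such an isomorphism persists to the end, since being an order isomorphism between the fixed sets $A$ and $B$ is absolute. An $\omega_1$-dense set stays $\omega_1$-dense, because $\omega_1$ is preserved and no new reals enter the fixed set $A$. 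Hence $\textsf{BA}(\omega_1)$ will hold in the final model, where moreover $2^\omega=\omega_2$.

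\textbf{The one-step poset.} For one pair $A,B$, the naive poset of finite partial order-preserving maps $A\to B$ is not ccc in general. It can collapse $\omega_1$ when $A$ and $B$ are, for instance, entangled-like. Following Baumgartner, I would therefore add side structure to the conditions.
\begin{itemize}
\item Fix a continuous $\in$-chain $\langle N_\alpha:\alpha<\omega_1\rangle$ of countable elementary submodels containing $A$, $B$ and a suitable enumeration. Put $A_\alpha=A\cap(N_{\alpha+1}\setminus N_\alpha)$ and $B_\alpha=B\cap(N_{\alpha+1}\setminus N_\alpha)$.
\item Split each $A_\alpha$ and each $B_\alpha$ into countable dense pieces.
\item Conditions are finite order-preserving partial maps $f$ such that $f$ maps $A_\alpha$ into $B_\alpha$ for every $\alpha$, together with finitely many rational "separating intervals" that the map must respect.
\end{itemize}
Density arguments, using that each piece is dense, show that the generic map is total and onto.

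\textbf{The main obstacle: the ccc.} Given $\omega_1$ conditions, I would first pass to a $\Delta$-system and then fix the rational type of the non-root parts. The difficulty is to find two conditions whose union is still order preserving. The points of distinct conditions lie in different levels $N_{\alpha+1}\setminus N_\alpha$. Two points in the same order interval relative to the root can be arranged so that the relative order of $a,a'$ agrees with that of $f(a),f(a')$. To achieve this, I would use elementarity of the $N_\alpha$: a condition lying mostly outside $N_\alpha$ can be reflected into $N_\alpha$ with its reals approximating the original ones arbitrarily closely. Choosing the reflected copy inside the right rational boxes then gives compatibility.

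I expect this reflection step to be the delicate part. It is also where some form of $\textsf{CH}$ is needed at intermediate stages, to make the sets $A_\alpha,B_\alpha$ countable and each level "new" over the earlier ones. If the iteration destroys this, I would restrict the iterands so that the argument at stage $\xi$ uses only a chain of models of length $\omega_1$ covering the relevant names, which exists since the names have size $\omega_1$.
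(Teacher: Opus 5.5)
Your outer framework is Baumgartner's own argument, which the paper cites rather than proves: a \textsf{GCH} ground model, a finite support iteration of length $\omega_2$ of ccc posets of size $\aleph_1$, bookkeeping, and preservation of $\omega_1$-density. Your worry about intermediate \textsf{CH} is unnecessary. Since $|P_\xi|\le\aleph_1$ and $V\models\textsf{CH}$, there are only $\aleph_1^{\aleph_0}=\aleph_1$ nice names for reals, so \textsf{CH} holds in every $V^{P_\xi}$, $\xi<\omega_2$.

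The genuine gap is the one-step poset, which as you describe it is not ccc. Levels $A_\alpha=A\cap(N_{\alpha+1}\setminus N_\alpha)$ and $B_\alpha=B\cap(N_{\alpha+1}\setminus N_\alpha)$ taken from an arbitrary elementary chain do not rule out the basic obstruction. Let $B=-A$, where you may take $A\not\cong -A$ (such $A$ exist under \textsf{CH}, so the pair cannot simply be skipped). The map $x\mapsto -x$ lies in every $N_\alpha$, so $a\in A_\alpha$ iff $-a\in B_\alpha$. Hence every $\{(a,-a)\}$ with $a\in A$ is a condition, and any two of them are incompatible, because their union reverses order. Attaching finitely many rational intervals cannot help. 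The incompatibility comes from the union itself, and the same configuration can be planted inside any prescribed rational boxes: just let $B$ contain a decreasing image of an uncountable part of $A$.

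This is exactly where your reflection step breaks. Compatibility of $p_\eta$ with $p_\xi$ requires, for every new pair, $a^\eta_i<a^\xi_i\iff b^\eta_i<b^\xi_i$. That is, $(a^\eta_i,b^\eta_i)$ must lie in the open first or third quadrant with vertex $(a^\xi_i,b^\xi_i)$. Elementarity only gives you $\eta$ with $x^\eta$ in rational boxes containing $x^\xi$, i.e.\ arbitrarily close to $x^\xi$. It never puts $x^\eta$ in a prescribed quadrant whose vertex lies outside the model. Moreover, an arbitrary uncountable family is not a member of your fixed chain $\langle N_\alpha\rangle$, so there is nothing to reflect it into.

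The missing idea is Baumgartner's choice of the partition. Using \textsf{CH}, one builds $A=\bigcup_\alpha A_\alpha$ and $B=\bigcup_\alpha B_\alpha$, with countable dense pieces, by diagonalizing against the $\aleph_1$ many real-coded monotone functions and closed sets. The goal is, roughly, that no point of $A_\alpha$ is sent to a point of $B_\alpha$ by a decreasing function coded from data available before stage $\alpha$. This is possible because each such function excludes only countably many values at a time, and $A$, $B$ are $\aleph_1$-dense.

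The ccc proof then needs no reflection of the family. Suppose $\{p_\xi\}$ were an uncountable antichain. The closure of the set of tuples formed by the new pairs is a closed set coded by a real, so it is available at some countable stage $\beta$. With one new pair, this closure is, off a countable set, the graph of a decreasing function. So any $p_\xi$ whose new pair lives at a level above $\beta$ contradicts the choice of the partition. The general case is an induction on the number of new pairs. Without such a requirement built into the partition, your iterands need not be ccc, and nothing guarantees that $\omega_1$ survives the iteration.
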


On the other hand, it should be clear that $\textsf{BA}(\mathfrak{c})$ is false, so consistently $\textsf{BA}(\omega_1)$ is false (e.g.\ under \textsf{CH}). 
Todorčević in \cite{To89} even showed that $\textsf{BA}(\mathfrak{b})$ is also false. 
Theorem \ref{thm: All omega1-dense sets are isomorphic} can be proven by forcing (as in the original proof), or using \textsf{PFA} as in \cite[Theorem 6.9, p. 945]{Ba84}.

The consistency of $\textsf{BA}(\omega_2)$ is an important open problem in set theory, see \cite{MT17}.

Using Dedekind cuts, one can easily obtain the following lemma.

\begin{lemma} \label{lem: Extending order isomorphism} Every order isomorphism $f : A \to B$ between $\omega_1$-dense sets $A, B \sub \bR$ extends to an order isomorphism $\wt{f} : \bR \to \bR$.
\end{lemma}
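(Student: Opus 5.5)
The natural route is through Dedekind cuts. Given an order isomorphism $f : A \to B$ between $\omega_1$-dense sets $A, B \sub \bR$, I would define
\[\wt{f}(x) = \sup\{f(a) : a \in A,\ a \le x\}\]
for $x \in \bR$. The first step is to check that this supremum is a real number. Since $A$ is $\omega_1$-dense, it is dense and unbounded in both directions, because every open interval meets $A$. So the set $\{a \in A : a \le x\}$ is nonempty. It is also bounded above by any $a' \in A$ with $a' > x$, and then $f(a) < f(a')$ for all such $a$, so the supremum is finite. Moreover, $\wt{f}(a) = f(a)$ for $a \in A$ because $f$ is increasing, and $\wt{f}$ is nondecreasing by construction.

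Next I would show that $\wt{f}$ is strictly increasing. Take $x < y$. By density of $A$ there are $a_1 < a_2$ in $A$ with $x < a_1 < a_2 < y$. Then
\[\wt{f}(x) \le f(a_1) < f(a_2) \le \wt{f}(y).\]

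The main step is surjectivity, and here the $\omega_1$-density of $B$ is used, through density and unboundedness. Given $y \in \bR$, put $x = \sup\{a \in A : f(a) \le y\}$. This set is nonempty and bounded above, because $B$ is unbounded below and above and $f$ is an order isomorphism onto $B$. I would then argue that $\wt{f}(x) = y$.

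If $\wt{f}(x) < y$, choose $b \in B$ with $\wt{f}(x) < b < y$, using density of $B$, and write $b = f(a)$. Since $f(a) \le y$, the definition of $x$ gives $a \le x$. Then $f(a) \le \wt{f}(x) < b = f(a)$, a contradiction.

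If $\wt{f}(x) > y$, choose $b = f(a) \in B$ with $y < b < \wt{f}(x)$. By the definition of $\wt{f}$ there is $a'' \in A$ with $a'' \le x$ and $f(a'') > b$, so $a'' > a$. Every $a_0 \in A$ with $f(a_0) \le y$ satisfies $f(a_0) < f(a)$, hence $a_0 < a$. Therefore $x \le a < a'' \le x$, again a contradiction.

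So $\wt{f}$ is an increasing bijection of $\bR$ onto $\bR$ that extends $f$, which is an order isomorphism, as required. Only density and unboundedness of $A$ and $B$ are needed, and both follow from $\omega_1$-density.
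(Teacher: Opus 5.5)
Your proof is correct and is the argument the paper intends. The paper only says the lemma follows easily "using Dedekind cuts", and your supremum extension $\wt{f}(x)=\sup\{f(a): a\in A,\ a\le x\}$, with surjectivity derived from the density of $B$, is that Dedekind-cut argument written out; you are also right that only density of $A$ and $B$ in $\bR$ is used, not full $\omega_1$-density.
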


We can easily transfer such results from the real line to the unit interval $(0, 1)$, since these two spaces are order isomorphic. 
Therefore, we can use Lemma \ref{lem: Extending order isomorphism} to prove some structural results about separable compact lines.

\begin{corollary} \label{cor: A simeq B, then I_A simeq I_B}
If $\omega_1$-dense sets $A, B \sub (0,1)$ are order isomorphic, then the spaces $I_A, I_B$ are homeomorphic.
\end{corollary}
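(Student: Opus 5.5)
Let $f : A \to B$ be an order isomorphism between the $\omega_1$-dense sets $A, B \sub (0,1)$. The plan is to extend $f$ to an order automorphism of the interval and then read off an order isomorphism of the lexicographic lines $I_A$ and $I_B$. An order isomorphism between compact linearly ordered spaces is automatically a homeomorphism for the order topologies, so producing such a map is enough.

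First, transfer Lemma \ref{lem: Extending order isomorphism} to $(0,1)$ through an order isomorphism $(0,1)\cong \bR$. This gives an order isomorphism $\wt f : (0,1)\to(0,1)$ extending $f$. Extend it further to $I=[0,1]$ by setting $\wt f(0)=0$ and $\wt f(1)=1$; the extension is still an order-preserving bijection. Since $\wt f[A]=f[A]=B$, the map $\wt f$ sends $A$ exactly onto $B$.

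Next, define $H : I_A \to I_B$ by $H(t,i) = (\wt f(t), i)$.
\begin{itemize}
\item $H$ is well defined and surjective. The point $(t,1)$ lies in $I_A$ exactly when $t\in A$, and this holds exactly when $\wt f(t)\in B$, which is exactly when $(\wt f(t),1)\in I_B$.
\item $H$ is injective and preserves the lexicographic order. If $t<s$, then $\wt f(t)<\wt f(s)$. If $t=s$, the second coordinate is unchanged.
\end{itemize}
Hence $H$ is an order isomorphism, and therefore a homeomorphism of the order topologies.

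The only step with real content is the extension of $f$ to all of $(0,1)$, and that is already supplied by Lemma \ref{lem: Extending order isomorphism}. It relies on $\omega_1$-density: both $A$ and $B$ are dense with no endpoints, so every real is determined by the Dedekind cut it makes in $A$, and $f$ maps cuts of $A$ to cuts of $B$. Once that lemma is assumed, the rest is a routine check.
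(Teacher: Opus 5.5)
Your proposal is correct and is essentially the paper's own proof: extend $f$ through Lemma \ref{lem: Extending order isomorphism} to an order automorphism of $(0,1)$, then of $[0,1]$ by fixing the endpoints, and take $(x,i)\mapsto(\wt f(x),i)$. This map is a lexicographic order isomorphism $I_A\to I_B$, hence a homeomorphism. As you implicitly note, the extension step uses only that $A$ and $B$ are dense, not the full strength of $\omega_1$-density.
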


\begin{proof}
Suppose that there exists an order isomorphism $f : A \to B$. By Lemma \ref{lem: Extending order isomorphism} and the remark following it, we extend it to an order isomorphism $\wt{f} : (0,1) \to (0,1)$, which in turn, can be extended in an obvious way to an order isomorphism $\hat{f} : [0,1] \to [0,1]$. Consider the map $\phi: I_A \to I_B$ defined by the formula $\phi(x, i) = (\hat{f}(x), i)$ for $(x, i) \in I_A$. Clearly, $\phi$ is an order isomorphism between $I_A$ and $I_B$, equipped with the lexicographic orders, hence it is a homeomorphism.
\end{proof}

As a consequence, it should be clear that under $\textsf{BA}$ we obtain the following.

\begin{corollary}[\textsf{BA}] \label{cor: IA, IB are homeomorphic}
If sets $A, B\sub (0, 1)$ are $\omega_1$-dense, then the spaces $I_A$ and $I_B$ are homeomorphic.
\end{corollary}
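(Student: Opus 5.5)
Under \textsf{BA}, the sets $A$ and $B$ are order isomorphic, and Corollary \ref{cor: A simeq B, then I_A simeq I_B} then gives the homeomorphism. So the whole proof reduces to applying \textsf{BA} correctly.

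\textsf{BA} speaks about $\omega_1$-dense subsets of $\bR$. The first step is to transfer the hypothesis to the real line. Fix an order isomorphism $\psi:(0,1)\to\bR$, for instance $\psi(x)=\tan(\pi(x-\tfrac12))$. This map is also a homeomorphism, so it carries nonempty open intervals of $(0,1)$ bijectively onto nonempty open intervals of $\bR$. Hence $\psi[A]$ and $\psi[B]$ meet every nonempty open interval of $\bR$ in exactly $\omega_1$ points, i.e.\ they are $\omega_1$-dense in $\bR$.

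Next, \textsf{BA} yields an order isomorphism $g:\psi[A]\to\psi[B]$. Then $f=\psi^{-1}\circ g\circ\psi$ restricted to $A$ is an order isomorphism from $A$ onto $B$.

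Finally, $A$ and $B$ are $\omega_1$-dense subsets of $(0,1)$ and are order isomorphic. These are exactly the hypotheses of Corollary \ref{cor: A simeq B, then I_A simeq I_B}, which gives that $I_A$ and $I_B$ are homeomorphic.

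I expect no real obstacle. The only point needing care is that Lemma \ref{lem: Extending order isomorphism}, used inside Corollary \ref{cor: A simeq B, then I_A simeq I_B}, must be applied on $(0,1)$ rather than on $\bR$. This is exactly the transfer via $\psi$ that the remark following that lemma allows, and it works because $\psi$ preserves both the order and the $\omega_1$-density.
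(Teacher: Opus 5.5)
Your proposal is correct and follows the paper's argument, which the paper leaves as ``clear'': \textsf{BA} gives an order isomorphism between $A$ and $B$, and the preceding corollary (order isomorphic $\omega_1$-dense subsets of $(0,1)$ give homeomorphic $I_A$ and $I_B$) then yields the homeomorphism. Your explicit transfer between $(0,1)$ and $\mathbb{R}$ via the increasing homeomorphism $\psi$, which preserves $\omega_1$-density, is exactly the step the paper leaves implicit.
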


We need one more lemma which does not require any additional set-theoretic axioms.

\begin{lemma}\label{lem: reduction to omega_1-dense sets}
For any separable compact line $K$ of weight $\omega_1$, the space $C(K)$ is isomorphic to a space $C(I_B)$ for some $\omega_1$-dense set $B \sub (0, 1)$.
\end{lemma}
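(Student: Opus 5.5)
The plan is to cut away the part of $K$ where $A$ is only countably dense, absorb that part as a metrizable summand using Lemma \ref{lem: C(KA) and C(KA) x C(M) are isomorphic}, and then normalize what remains to the form $I_B$.

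\emph{Step 1: reduce to $I_A$.} Since $K$ has weight $\omega_1$, it is uncountable. Theorem \ref{thm: C(K_A) sim C(I_B)} then gives $A\subseteq(0,1)$ with $C(K)\simeq C(I_A)$. Because isomorphic $C(\cdot)$ spaces have equal weight, $|A|=\omega_1$. The only remaining task is to make the set $\omega_1$-dense, and I will do this by cutting $I_A$ down to a closed subspace without changing $C(\cdot)$ up to isomorphism.

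\emph{Step 2: remove the countable part.} Let $U$ be the union of all open intervals $J\subseteq(0,1)$ with $|J\cap A|\le\omega$. Since $U$ is a countable union of such intervals (Lindel\"of), the set $A\cap U$ is countable. Put $F=[0,1]\setminus U$, a closed set. Then $F_{A\cap F}=(F\times\{0\})\cup((A\cap F)\times\{1\})$ is a closed subspace of $I_A$, and its complement consists of $U\times\{0\}$ together with the countably many points $(A\cap U)\times\{1\}$. This complement is an open set with a countable base: the basic intervals with rational endpoints and endpoints in $(A\cap U)\times\{0,1\}$ suffice.

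Hence the quotient $I_A/F_{A\cap F}$ is a compact space of countable weight, i.e. metrizable. By Lemma \ref{lem: decomposition through a closed subspace} and Corollary \ref{cor_C(K,F)} we get
\[C(I_A)\simeq C(F_{A\cap F})\oplus C(M)\]
with $M$ metrizable compact or finite. Since $|A\cap F|=\omega_1$, the line $F_{A\cap F}$ is uncountable, so Lemma \ref{lem: C(KA) and C(KA) x C(M) are isomorphic} yields $C(I_A)\simeq C(F_{A\cap F})$.

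\emph{Step 3: normalize to $I_B$.} Let $L=F_{A\cap F}$ and let $L^{(\alpha)}$ be its perfect kernel. By Corollary \ref{cor: C(L) and C(L^(alpha)}, $C(L)\simeq C(L^{(\alpha)})$, and $L\setminus L^{(\alpha)}$ is countable. Since $L^{(\alpha)}$ has no isolated points, the construction in the proof of Proposition \ref{prop_no_isolated_points} applies: collapse the gaps of its projection $F'\subseteq F$ via the quotient map $q$ and the order isomorphism $h$. This produces $B=h[q[A' \cup\{a_n\}]]$, where $A'$ is the set of doubled points of $L^{(\alpha)}$, which is $A\cap F$ minus a countable set.

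\emph{Main obstacle: checking $B$ is $\omega_1$-dense.} Take a nonempty open interval $J'\subseteq(0,1)$. Its preimage under $h\circ q$ contains $J\cap F'$ for some open interval $J$ with $J\cap F'\ne\emptyset$, and we may shrink $J$ so that $J\cap F'$ is infinite, hence $J$ meets $F'$ in an uncountable set. Since $J\cap F\neq\emptyset$, the interval $J$ is not contained in $U$ by the definition of $U$, so $|J\cap A|=\omega_1$.

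Removing the countable sets $A\cap U$ and $(A\cap F)\setminus A'$, and noting that $q$ is at most two-to-one, still leaves $\omega_1$ points of $B$ in $J'$. The bound $|B|\le\omega_1$ is automatic. The delicate point is verifying that the preimage of $J'$ really contains such an interval $J$, i.e. that collapsing countably many gaps does not destroy density. I would argue this directly from the fact that $h\circ q$ is monotone and continuous on $F'$. With that in hand, $C(K)\simeq C(I_B)$ with $B$ $\omega_1$-dense, as required.
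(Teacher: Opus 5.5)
Your proof is correct and follows the paper's route: discard the countably-dense part $U$ and pass to $F_{A\cap F}$, absorb the metrizable complement via Lemma \ref{lem: C(KA) and C(KA) x C(M) are isomorphic}, then take the perfect kernel and apply Proposition \ref{prop_no_isolated_points}. The differences are minor: you get metrizability of $I_A/F_{A\cap F}$ from a countable base of its open complement, where the paper instead shows $C_0(K\|L)$ is separable via the composition operator $T_\varphi$; and you check $\omega_1$-density by pushing $A'$ through $h\circ q$, where the paper counts jumps $((a,0),(a,1))$ in order intervals. The point you flag as delicate follows at once from continuity and surjectivity of $h\circ q$.
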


\begin{proof}
Using Theorem \ref{thm: C(K_A) sim C(I_B)} we can assume that $K = I_A$, where $A$ is a subset of $(0,1)$ of size $\omega_1$. Let 
\[U = \bigcup \{(a, b) \sub (0,1): a, b \in \bQ \mbox{ and } |(a, b) \cap A| \le  \omega\}.\]
Clearly, $|U\cap A| \le \omega$ and $A\sm U$ is $\omega_1$-dense in a compact set $M = I \sm U$. We will consider a separable compact line $L = M_{A\sm U}$. 
First, note that $L$ is a closed subset of $K$, hence, by Lemma \ref{lem: decomposition through a closed subspace} we have
\begin{eqnarray}\label{eq: decomp}
C(K) \simeq C(L) \oplus C_0(K\|L)\,.
\end{eqnarray}
Next, let us verify that the space $C_0(K\|L) = C_0(I_{A}\| M_{A\sm U})$ is separable. 

The space $C(I_{A\cap U})$ is separable, since $A\cap U$ is countable; therefore its subspace $C_0(I_{A\cap U}\|M \times \{0\})$  is also separable. Let $\varphi: I_A \to I_{A\cap U}$ be a natural continuous surjection defined by the formula:
\[\varphi((x,i)) = \begin{cases} (x,0)&\quad \mbox{for }  (x,i) \in (A\sm U) \times \{1\}\,,\\
 (x,i)&  \quad \mbox{for }  (x,i) \in I_A \sm [(A\sm U) \times \{1\}]\,.
\end{cases}\]
One can easily check that the isometric embedding $T_\varphi : C(I_{A\cap U}) \to C(I_A)$ generated by $\varphi$, i.e., an operator sending  each $f\in C(I_{A\cap U})$ to the composition $f\circ \varphi$, maps the space $C_0(I_{A\cap U}\|M \times \{0\})$ onto $C_0(I_{A}\| M_{A\sm U})$, hence the latter space is also separable.

By Corollary \ref{cor_C(K,F)} the space $C_0(K\|L) = C_0(I_{A}\| M_{A\sm U})$ is isomorphic to the space $C(N)$ for some compact space $N$. Since $C_0(K\|L)$ is separable, $N$ must be metrizable, so we can use Lemma \ref{lem: C(KA) and C(KA) x C(M) are isomorphic} and property (1) to conclude that $C(K)$ and $C(L)$ are isomorphic.

Let $\alpha = \min\{\beta: L^{(\beta+1)} = L^{(\beta)}\}$. By Corollary \ref{cor_properties_K^(alpha)} we have $|L\setminus  L^{(\alpha)}| \le \omega$. The space $L^{(\alpha)}$ has no isolated points, therefore, by Proposition \ref{prop_no_isolated_points} it is order isomorphic (hence homeomorphic) to a space $I_B$ for some $B\subseteq (0,1)$.

The set $A\sm U$ is $\omega_1$-dense in $M$, hence each nonempty open interval in $L =  M_{A\sm U}$ (an interval with respect to lexicographic order in $L$) contains $\omega_1$ many gaps, i.e., empty intervals of the form $\big((a,0),(a,1)\big)$, where $a\in A\sm U$. 
Since $|L\setminus  L^{(\alpha)}| \le \omega$, the spaces $L^{(\alpha)}$ and $I_B$ have the same property, that is, every nonempty order open interval contains $\omega_1$ gaps. 
This means that the set $B$ is 
$\omega_1$-dense in $I$. By Corollary \ref{cor: C(L) and C(L^(alpha)} the spaces $C(L)$ and $C(L^{(\alpha)})$ are isomorphic, hence we obtain the desired conclusion
\[C(K) \simeq C(L) \simeq C(L^{(\alpha)}) \simeq C(I_B)\,.\qedhere\]
\end{proof}

We can now complete the proof of the main result of this section.

\begin{proof}[Proof of Theorem \ref{thm: Under BA there is one C(K_A) space}]
 The theorem follows immediately from Corollary \ref{cor: IA, IB are homeomorphic} and Lemma \ref{lem: reduction to omega_1-dense sets}.
\end{proof}

Corollary \ref{cor: Under BA C(K_A) is iso with its square} easily follows from Theorem \ref{thm: Under BA there is one C(K_A) space}:

\begin{proof}[Proof of Corollary \ref{cor: Under BA C(K_A) is iso with its square}]
Assume $\textsf{BA}$, and let $K$ be an arbitrary separable compact line of weight $\omega_1$. The direct sum $C(K)\oplus C(K)$ can be identified with the space $C(K\sqcup K)$, where $K\sqcup K$ is a disjoint union of two copies of the space $K$. Since $K\sqcup K$ is homeomorphic to a separable compact line of weight $\omega_1$, Theorem \ref{thm: Under BA there is one C(K_A) space} implies that $C(K)$ and $C(K)\oplus C(K)$ are isomorphic.
\end{proof}

\section{Function spaces on finite products of separable compact lines} \label{sec_products}

In this section, using Theorem \ref{thm: Under BA there is one C(K_A) space}, the results of Michalak from \cite{Mi20}, and assuming Baumgartner's Axiom, we consistently obtain a complete isomorphic classification of spaces $C(K)$, where $K$ is a nonmetrizable finite product of separable compact lines of weight $\le \omega_1$.

For the entirety of this Section, fix some set $\mathbf{A} \sub (0,1)$ of cardinality $\omega_1$.
Define a family of compact spaces
\begin{align*}
\cK_\mathbf{A} = \{(I_\mathbf{A})^n : n \geq 1\} &\cup \{(I_\mathbf{A})^n \times [0,1] : n \geq 1\}\\
&\cup \{(I_\mathbf{A})^n\times (\omega^{\omega^\alpha}+1) : n \geq 1, \omega_1 > \alpha > 0\}.
\end{align*}

The goal of this Section is to prove that the class $\cK_\mathbf{A}$ (regardless of which set $\mathbf{A} \sub (0, 1)$ of cardinality $\omega_1$ is chosen) exhausts all isomorphism types of $C(K)$, where $K$ is any nonmetrizable finite product of separable compact lines of weight $\le \omega_1$.
First, let us note that the spaces $C(K)$ for $K \in \cK_\mathbf{A}$ are pairwise nonisomorphic due to the following result of Michalak.

\begin{theorem}[\cite{Mi20}*{Corollary 5.3}] \label{thm_different_C(K)_are_noniso}
Let $n, k \in \bN$. Let $K_1, \dotsc, K_n, L_1, \dotsc, L_k$ be nonmetrizable separable compact lines.
\begin{enumerate}[(a)]
    \item If $M_1, M_2$ are infinite compact metric spaces such that the spaces $C(\prod_{i=1}^n K_i \times M_1)$ and $C(\prod_{i=1}^k L_i \times M_2)$ are isomorphic, then $n = k$ and the spaces $C(M_1)$ and $C(M_2)$ are isomorphic.
    \item If $M$ is a compact metric space such that the spaces $C(\prod_{i=1}^n K_i \times M)$ and $C(\prod_{i=1}^k L_i)$ are isomorphic, then $n = k$ and the space $C(M)$ is finite-dimensional or isomorphic to $c_0$.
\end{enumerate}
\end{theorem}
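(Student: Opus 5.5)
The plan is to extract from $C\bigl(\prod_{i=1}^n K_i\times M\bigr)$ two isomorphic invariants. The first should recover $n$. The second should recover the isomorphism type of $C(M)$, which for metrizable $M$ is determined by whether $M$ is finite, countable with a given Szlenk index (Bessaga--Pełczyński), or uncountable (Miljutin).

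\emph{Recovering $n$.} For a nonmetrizable separable compact line $K_A$ with $A$ uncountable, each $a\in A$ produces a ``gap'' $(a,0)<(a,1)$. With it we get the measure $\nu_a=\delta_{(a,1)}-\delta_{(a,0)}$ and the clopen indicator $1_{[(a,1),\to)}$. These give an uncountable biorthogonal-type system in $C(K_A)\times M(K_A)$. For a product $\prod_{i=1}^n K_i$ we take the $n$-fold tensor measures $\nu_{a_1}\otimes\dots\otimes\nu_{a_n}$. I would define an ordinal or integer invariant $\rho(X)$ for a Banach space $X$: the largest $k$ such that there are uncountable families of norm-one functionals in $B_{X^*}$ with a $k$-level iterated weak$^*$-derivation behaviour. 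Concretely, take the set of weak$^*$ accumulation points of uncountable ``spread'' families, iterate $k$ times, and require it to stay nonzero. Because such a definition is phrased purely in terms of $X^*$ with its weak$^*$ topology and norm (up to a constant), it is preserved by isomorphisms. Separability of the lines (hereditary Lindelöfness, Proposition \ref{prop_properties_K_A}) gives the upper bound: every uncountable family of measures condenses, in a countable-to-one way, along one coordinate at a time, so one cannot exceed $n$ levels. The metric factor $M$ contributes nothing, since $C(M)^*$ is weak$^*$ separable with metrizable ball. This gives $n=k$ in both (a) and (b).

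\emph{Recovering $C(M)$.} Write $C\bigl(\prod K_i\times M\bigr)=C\bigl(\prod K_i, C(M)\bigr)$. I would consider the quotient of $X$ by the closed subspace generated by the ``nonmetrizable directions''; more invariantly, use the weak$^*$ closure of the set of functionals that are \emph{not} reached at the top level of the derivation above. The aim is to show that the residual part is controlled by $C(M)$. Here it should be isomorphic to $C(M)$, or to $c_0(C(M))$-like sums, and its Szlenk index equals $Sz(C(M))$. By Bessaga--Pełczyński and Miljutin this yields $C(M_1)\simeq C(M_2)$ for infinite $M_1,M_2$, which is (a). For (b), the corresponding residual in $C\bigl(\prod L_i\bigr)$ comes from countably many isolated-like directions (Corollary \ref{cor_properties_K^(alpha)}), so its Szlenk index is at most $\omega$. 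This forces $C(M)$ to be finite-dimensional or $\simeq c_0$, since $\omega^{\omega}+1$ and anything beyond have larger index and an uncountable $M$ contains $C[0,1]$.

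\emph{Main obstacle.} I expect the hardest step to be the isomorphic invariance together with the exact computation of the $n$-level invariant for products. Two things have to be shown. First, an isomorphism $T$ transports the uncountable gap families up to a bounded perturbation. Second, the $n$-fold tensor structure cannot be simulated with fewer factors. My first attempt would be a $\Delta$-system and pressing-down type argument on the countable supports of $T^*(\nu_{a_1}\otimes\dots\otimes\nu_{a_n})$, in the spirit of the proof of Lemma \ref{notsurjective}. It would use that each measure on a separable compact line has at most countably many atoms among the gap points.
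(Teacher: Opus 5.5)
The paper gives no proof of this statement. It is quoted from Michalak \cite{Mi20} and used as a black box in Section~\ref{sec_products}, so your argument has to stand on its own. As written it is a programme rather than a proof: neither invariant is defined, and the step you call the main obstacle is essentially the whole theorem.

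For recovering $n$, ``uncountable spread families'' and ``$k$-level iterated weak$^*$-derivation'' are never made precise, and the only concrete families you name cannot produce levels. A continuous $f$ on $K_A$ has only finitely many jumps with $|f(a,1)-f(a,0)|>\varepsilon$: for a monotone sequence of such $a_m$, the points $(a_m,0)$ and $(a_m,1)$ have a common limit. By density of the algebraic tensor product, the same finiteness holds for $(\nu_{a_1}\otimes\cdots\otimes\nu_{a_n})(f)$ on a product. So each of your families, together with $0$, is weak$^*$ closed with $0$ as its only accumulation point. A derivation by accumulation points therefore stops after one step, whatever $n$ is.

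You also oscillate between two kinds of invariant:
\begin{itemize}
\item an intrinsic one, where invariance is free but the upper bound must be proved for \emph{arbitrary} uncountable families of measures on $\prod L_j\times M_2$;
\item one tied to gap families, where you must show that $T^*$ carries gap families to gap-like ones, which you leave open.
\end{itemize}
The sentence meant to give the upper bound is not an argument. Moreover, the property it invokes fails for products: $I_{(0,1)}\times I_{(0,1)}$ is not hereditarily Lindel\"of, since $\{((x,1),(1-x,1)):0<x<1\}$ is an uncountable discrete subspace.

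The $C(M)$ part has the same defect: the ``residual'' is undefined, and its invariance under isomorphisms is exactly what must be proved. Your explanation of (b) is also wrong. Whether the residual is $C(M)$ or $c_0(C(M))$ is decisive. For any separable compact line $K$, order $\omega\times K$ lexicographically and add a largest point. This yields a separable compact line $L$, perfect and nonmetrizable whenever $K$ is (e.g.\ $K=I_A$ with $A$ uncountable), with $C(L)\simeq c_0(C(K))\simeq C(K,c_0)\simeq C(K\times(\omega+1))$. Hence a working invariant must give every $C(\prod L_i)$, perfect or not, the same value as $c_0$ (Szlenk index $\omega$). This cannot come from the countably many non-perfect points of Corollary~\ref{cor_properties_K^(alpha)}, which do not exist for perfect $L$.

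The $c_0$ in fact comes from the jump quotient. The map $f\mapsto(f(a,1)-f(a,0))_{a\in A}$ sends $C(K_A)$ onto $c_0(A)$, with kernel $\{g\circ\pi: g\in C(K)\}$, where $\pi$ is the projection onto $K$; surjectivity holds because the adjoint $e_a\mapsto\nu_a$ embeds $\ell_1(A)$ isomorphically. Likewise, writing $K_i=(F_i)_{A_i}$, the $n$-fold mixed jumps map $C(\prod_i K_i\times M)$ onto the $c_0$-sum of copies of $C(M)$ indexed by $A_1\times\cdots\times A_n$.

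What is missing is control of an arbitrary isomorphism relative to this structure, for instance showing that it respects it up to countably many coordinates. That control must be strong enough to force $n=k$ and to transfer the top layers. Only after that does your concluding step, via Szlenk indices and the Bessaga--Pe\l czy\'nski and Miljutin classification, go through.
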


Let us recall some notation and simple facts regarding the spaces of vector-valued functions. For a compact space $K$ and a Banach space $(X,\|\cdot\|_X)$, 
by $C(K, X)$ we denote the space of continuous functions $f : K \to X$ with the supremum norm $\|f\| = \sup_{k \in K} \|f(k)\|_X$. 

\begin{fact}[\cite{Se71}*{Proposition 7.7.5}] \label{fact_C(KxL)_iso_C(C(K),C(L))}
If $K, L$ are compact spaces, then $C(K \times L) \simeq C(K, C(L))$.
\end{fact}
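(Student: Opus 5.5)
We prove the identification $C(K\times L)\simeq C(K,C(L))$ by the canonical map, which turns out to be a linear isometry onto. Define $\Phi : C(K\times L)\to C(K,C(L))$ by $\Phi(f)(k) = f(k,\cdot)$, i.e.\ $\Phi(f)(k)(l) = f(k,l)$.

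First, $\Phi$ is well defined. For fixed $k$ the section $f(k,\cdot)$ is continuous on $L$, so it lies in $C(L)$. The nontrivial point is that $k\mapsto f(k,\cdot)$ is continuous into $C(L)$ with the supremum norm. Fix $k_0\in K$ and $\eps>0$. For each $l\in L$, continuity of $f$ at $(k_0,l)$ gives open sets $U_l\ni k_0$ and $V_l\ni l$ with $|f(k,l')-f(k_0,l')|<\eps$ on $U_l\times V_l$; to get this, pick a product neighbourhood on which $f$ stays within $\eps/2$ of $f(k_0,l)$ and use the triangle inequality. Cover $L$ by finitely many $V_{l_1},\dots,V_{l_m}$ using compactness, and put $U=\bigcap_j U_{l_j}$. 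Then $\|f(k,\cdot)-f(k_0,\cdot)\|_\infty\le\eps$ for all $k\in U$. This tube-lemma step is the only place where compactness of $L$ is used, and it is the main obstacle in the argument.

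Linearity of $\Phi$ is clear. $\Phi$ is an isometry, since
\[\|\Phi(f)\| = \sup_{k}\sup_{l}|f(k,l)| = \|f\|.\]

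For surjectivity, take $g\in C(K,C(L))$ and set $f(k,l)=g(k)(l)$; then $\Phi(f)=g$, provided $f$ is continuous on $K\times L$. Continuity follows from
\[|f(k,l)-f(k_0,l_0)| \le \|g(k)-g(k_0)\|_\infty + |g(k_0)(l)-g(k_0)(l_0)|.\]
The first term is small for $k$ near $k_0$ by continuity of $g$, and the second is small for $l$ near $l_0$ by continuity of $g(k_0)$.

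Hence $\Phi$ is a surjective linear isometry, and in particular $C(K\times L)\simeq C(K,C(L))$.
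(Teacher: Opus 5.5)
Your proof is correct: the canonical map $f\mapsto (k\mapsto f(k,\cdot))$ is a surjective linear isometry, with the tube-lemma argument supplying the norm-continuity of $k\mapsto f(k,\cdot)$ and the triangle inequality giving joint continuity in the converse direction. The paper gives no proof of its own and simply cites Semadeni's Proposition 7.7.5, whose standard proof is this same canonical identification, so your approach is essentially the one the paper relies on.
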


The next fact is obvious.

\begin{fact} \label{fact_C(K,X)_iso_C(K,Y)}
For any compact space $K$ and isomorphic Banach spaces $X, Y$, we have $C(K, X) \simeq C(K, Y)$.
\end{fact}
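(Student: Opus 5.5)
The statement is Fact \ref{fact_C(K,X)_iso_C(K,Y)}: for a compact space $K$ and an isomorphism $S: X\to Y$ of Banach spaces, $C(K,X)\simeq C(K,Y)$. The plan is to let $S$ act pointwise, i.e. compose functions with $S$.

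Define $\hat S: C(K,X)\to C(K,Y)$ by $\hat S(f)=S\circ f$.

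1. Since $S$ is a bounded linear map, it is continuous. Hence $S\circ f$ is continuous whenever $f$ is, so $\hat S$ is well defined.

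2. Linearity of $\hat S$ is immediate from linearity of $S$.

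3. For the upper bound, note that for every $k\in K$ we have
\[\|S(f(k))\|_Y\le \|S\|\,\|f(k)\|_X.\]
Taking the supremum over $k\in K$ gives $\|\hat S f\|\le\|S\|\,\|f\|$.

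4. Define $\widehat{S^{-1}}(g)=S^{-1}\circ g$ for $g\in C(K,Y)$. The same argument applied to the bounded operator $S^{-1}$ shows that $\widehat{S^{-1}}$ is a bounded linear map $C(K,Y)\to C(K,X)$. It is a two-sided inverse of $\hat S$, because $S^{-1}\circ(S\circ f)=f$ and $S\circ(S^{-1}\circ g)=g$.

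5. Consequently $\hat S$ is a bounded linear bijection with bounded inverse, hence an isomorphism. Explicitly, $\|f\|\le\|S^{-1}\|\,\|\hat S f\|$.

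There is no real obstacle. The only point needing care is step 1, continuity of the composition, and it follows directly from the continuity of $S$.
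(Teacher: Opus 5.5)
Your proof is correct: composing pointwise with $S$ and with $S^{-1}$ gives mutually inverse bounded operators between $C(K,X)$ and $C(K,Y)$, with norms at most $\|S\|$ and $\|S^{-1}\|$ respectively. The paper gives no proof and simply calls the fact obvious, so this composition argument is the standard reasoning it leaves implicit.
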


To fully classify the spaces of continuous functions on compact spaces of the family $\cK_\mathbf{A}$, we also need the following result.

\begin{lemma}[\textsf{BA}] \label{lem: C(I_A) isomorphic to c_0(C(I_A))}
If a set $A$ is $\omega_1$-dense in $(0, 1)$, then the Banach spaces $C(I_A)$ and $C(I_A,c_0)$ are isomorphic.
\end{lemma}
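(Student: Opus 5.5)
Under \textsf{BA} the plan is to realize $C(I_A,c_0)$ as $C(L)$ for a separable compact line $L$ of weight $\omega_1$, and then apply Theorem \ref{thm: Under BA there is one C(K_A) space}. Let $\omega+1$ be the convergent sequence. By Fact \ref{fact_C(KxL)_iso_C(C(K),C(L))} we have $C(I_A\times(\omega+1))\simeq C(I_A,C(\omega+1))$. Since $C(\omega+1)\simeq c_0$, Fact \ref{fact_C(K,X)_iso_C(K,Y)} gives
\[C(I_A,c_0)\simeq C(I_A\times(\omega+1)).\]
So it suffices to show that $C(I_A\times(\omega+1))\simeq C(I_A)$.

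Next observe that $I_A\times(\omega+1)$ is not a line, but it is close to one. The space $C(I_A\times(\omega+1))$ consists of sequences $(f_n)$ in $C(I_A)$ converging uniformly to some $f_\infty$. Subtracting $f_\infty$ from every coordinate gives
\[C(I_A\times(\omega+1))\simeq C(I_A)\oplus c_0(C(I_A)),\]
where $c_0(C(I_A))$ is the $c_0$-sum of countably many copies of $C(I_A)$. We then realize the $c_0$-sum part as a hyperplane of $C(L)$, where $L$ is a separable compact line obtained by placing countably many copies of $I_A$ in order, shrinking to a point. Concretely, take $L=K_B$ with $K=\{0\}\cup\bigcup_n [2^{-2n-1},2^{-2n}]$ and $B=\{0\}\cup\bigcup_n h_n[A]$, where $h_n$ is an increasing affine map of $[0,1]$ onto $[2^{-2n-1},2^{-2n}]$.

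Then $L$ is a separable compact line of weight $\omega_1$. Its functions are sequences of functions on the copies of $I_A$ converging uniformly to a constant, namely the value at the limit point. We must check that at the point $0$ one only picks up $(0,0)$ and $(0,1)$, and these two points are isolated on one side. To avoid the subtlety it is safer to put $0\notin B$, so that only the point $(0,0)$ lies over $0$ and is the limit of the blocks. This gives
\[C(L)\simeq \bR\oplus c_0(C(I_A))\simeq c_0(C(I_A)),\]
the last step by Fact \ref{pre: C(K) iso to its hyperplanes}.

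Combining, $C(I_A,c_0)\simeq C(I_A)\oplus C(L)$. Now $I_A\sqcup L$ is again (order isomorphic to) a separable compact line of weight $\omega_1$, placing one after the other. Hence by Theorem \ref{thm: Under BA there is one C(K_A) space},
\[C(I_A)\oplus C(L)\simeq C(I_A\sqcup L)\simeq C(I_A).\]
Here $I_A$ itself has weight $\omega_1$, since $A$ is $\omega_1$-dense. The main point to verify carefully is the decomposition $C(I_A\times(\omega+1))\simeq C(I_A)\oplus c_0(C(I_A))$ and the identification $C(L)\simeq \bR\oplus c_0(C(I_A))$. Both should be routine: a function on $L$ is continuous exactly when its restrictions to the clopen blocks are continuous and they converge uniformly to the value at $(0,0)$, since the blocks are clopen and accumulate only at $(0,0)$.
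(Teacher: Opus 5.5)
Your proof is correct, but it uses \textsf{BA} differently from the paper.

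The paper first notes that $C(I_A,c_0)$ is isometric to the $c_0$-sum of countably many copies of $C(I_A)$, via $C_0(I_A\times\omega)$. It then decomposes $I_A$ itself. It picks $a_n\in A$ decreasing to $0$, and the clopen intervals $K_n=[(a_{n+1},1),(a_n,0)]$ partition $I_A\setminus\{(0,0)\}$ and accumulate only at $(0,0)$. Hence $C_0(I_A\|\{(0,0)\})$ is isometric to the $c_0$-sum of the $C(K_n)$. Each $K_n$ is a copy of some $I_{A'}$ with $A'$ $\omega_1$-dense, so it is homeomorphic to $I_A$ by Corollary \ref{cor: IA, IB are homeomorphic}. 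The hyperplane fact then finishes the proof. Thus the paper needs only the homeomorphism consequence of \textsf{BA}, applied inside $I_A$.

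You instead build an external line $L$ out of literal copies of $I_A$. This makes $C(L)\simeq c_0(C(I_A))$ a \textsf{BA}-free computation, and all the set theory goes into Theorem \ref{thm: Under BA there is one C(K_A) space}. That is legitimate, since the theorem is proved in Section \ref{sec_BA} without this lemma. The cost is that you rely on the heavier machinery behind that theorem, namely Lemma \ref{lem: reduction to omega_1-dense sets} and Lemma \ref{lem: C(KA) and C(KA) x C(M) are isomorphic}. The paper's argument gets by with the lighter Corollary \ref{cor: IA, IB are homeomorphic} and even yields isometries $C(K_n)\cong C(I_A)$.

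Your detour through $I_A\times(\omega+1)$ creates an extra $C(I_A)$ summand, which you then absorb via $I_A\sqcup L$. You can skip this: $C(I_A,c_0)$ is already isometric to $c_0(C(I_A))$, and $L$ alone is a separable compact line of weight $\omega_1$. So $C(I_A,c_0)\simeq C(L)\simeq C(I_A)$ directly.
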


\begin{proof}
First, recall a simple observation that, for any compact space $K$, the space $C(K,c_0)$ is isometric to the $c_0$-sum of $\omega$ copies of $C(K)$. Indeed, $c_0$ is isometric to $C_0(\omega)$, hence $C(K,c_0)$ is isometric to $C(K,C_0(\omega))$, which in turn is isometric to $C_0(K\times \omega)$, see \cite{Se71}*{Proposition 7.7.6}. Clearly, this latter space is  isometric to the $c_0$-sum of $\omega$ copies of $C(K)$.

Let $a_0 = 1$ and $(a_n)_{n\ge 1}$ be a decreasing sequence in $A$ which converges to $0$. 
Put
\[K_n = [(a_{n+1}, 1), (a_n, 0)] \sub I_A\]
for $n\in \omega$ (an interval with respect to lexicographic order in $I_A$).
For every $n\in \omega$, the set $A \cap K_n$ is $\omega_1$-dense in $(a_{n+1}, a_n)$, so by Corollary \ref{cor: IA, IB are homeomorphic}, $C(K_n)$ is isometric to $C(I_A)$.
As $I_A$ is Fr\'echet, due to Fact \ref{pre: C(K) iso to its hyperplanes}, we have $C(I_A) \simeq C_0(I_A\| \{(0, 0)\})$.

What remains is a standard observation that $C_0(I_A\| \{(0, 0)\})$ is isometric to the $c_0$-sum of spaces $C(K_n), n\in\omega$.
\end{proof}

Now we have all the tools to prove the main result of this Section.

\begin{theorem}[\textsf{BA}] \label{thm_isomorphic_classes_of_finite_products}
Let $K$ be a nonmetrizable finite product of separable compact lines of weight $\le \omega_1$. Then the space $C(K)$ is isomorphic to exactly one space $C(L)$ for $L \in \cK_\mathbf{A}$.
\end{theorem}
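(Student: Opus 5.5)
Uniqueness is immediate from Theorem \ref{thm_different_C(K)_are_noniso}, so the work is in existence. For uniqueness, the spaces $C(L)$, $L\in\cK_\mathbf{A}$, are pairwise nonisomorphic. Part (a) separates products with an infinite metric factor by the number $n$ of line factors and by the isomorphism type of $C(M)$. The spaces $C[0,1]$ and $C(\omega^{\omega^\alpha}+1)$, $0<\alpha<\omega_1$, are pairwise nonisomorphic by the Bessaga--Pełczyński and Miljutin classification. Part (b) separates $(I_\mathbf{A})^n$ from the products with an infinite metric factor, since none of $C[0,1]$ or $C(\omega^{\omega^\alpha}+1)$ with $\alpha>0$ is finite-dimensional or isomorphic to $c_0\simeq C(\omega+1)$. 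It also separates different $n$.

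For existence, write $K=\prod_{i=1}^m K_i$ with each $K_i$ a separable compact line of weight $\le\omega_1$. Group the factors into nonmetrizable ones $K_1,\dots,K_n$ ($n\ge1$, since $K$ is nonmetrizable) and metrizable ones; the latter form a compact metric space $M$. If no metric factor is present, put $M$ a point. Each $K_i$ with $i\le n$ is uncountable of weight exactly $\omega_1$. By Lemma \ref{lem: reduction to omega_1-dense sets} and \textsf{BA} (Theorem \ref{thm: Under BA there is one C(K_A) space}), $C(K_i)\simeq C(I_\mathbf{A})$. The key step is to replace the factors one at a time using Facts \ref{fact_C(KxL)_iso_C(C(K),C(L))} and \ref{fact_C(K,X)_iso_C(K,Y)}. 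Writing $P$ for the product of the remaining factors, we have
\[C(K_i\times P)\simeq C(P,C(K_i))\simeq C(P,C(I_\mathbf{A}))\simeq C(I_\mathbf{A}\times P).\]
Iterating gives $C(K)\simeq C((I_\mathbf{A})^n\times M)$.

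Next we identify the metric factor by cases on $M$. If $M$ is finite with $k$ points, $C((I_\mathbf{A})^n\times M)$ is the direct sum of $k$ copies of $C((I_\mathbf{A})^n)$. We absorb these using Corollary \ref{cor: Under BA C(K_A) is iso with its square} for $I_\mathbf{A}$ and the factor-replacement trick above:
\[C((I_\mathbf{A})^n\times 2)\simeq C((I_\mathbf{A})^{n-1}\times (I_\mathbf{A}\sqcup I_\mathbf{A}))\simeq C((I_\mathbf{A})^n).\]
If $M$ is uncountable, Miljutin gives $C(M)\simeq C[0,1]$, so $C((I_\mathbf{A})^n\times M)\simeq C((I_\mathbf{A})^n, C(M))\simeq C((I_\mathbf{A})^n\times[0,1])$. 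If $M$ is countably infinite, Bessaga--Pełczyński gives $C(M)\simeq C(\omega^{\omega^\alpha}+1)$ for some $\alpha<\omega_1$. When $\alpha>0$ we land in $\cK_\mathbf{A}$ the same way.

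The remaining case, which I expect to be the main obstacle, is $\alpha=0$, i.e.\ $C(M)\simeq c_0$. Then $C(K)\simeq C((I_\mathbf{A})^n, c_0)$, and we want this to be isomorphic to $C((I_\mathbf{A})^n)$. The plan is to write
\[C((I_\mathbf{A})^{n}\times(\omega+1))\simeq C\big((I_\mathbf{A})^{n-1}, C(I_\mathbf{A}\times(\omega+1))\big)\]
and use $C(I_\mathbf{A}\times(\omega+1))\simeq C(I_\mathbf{A},c_0)\simeq C(I_\mathbf{A})$, the last step by Lemma \ref{lem: C(I_A) isomorphic to c_0(C(I_A))}. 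This requires choosing $\mathbf{A}$ to be $\omega_1$-dense. If the fixed $\mathbf{A}$ is not, first note that under \textsf{BA} $C(I_\mathbf{A})\simeq C(I_B)$ for an $\omega_1$-dense $B$, via Lemma \ref{lem: reduction to omega_1-dense sets}, and apply the lemma to $I_B$. The same factor-replacement argument then yields $C((I_\mathbf{A})^n\times(\omega+1))\simeq C((I_\mathbf{A})^n)$. In every case $C(K)$ is isomorphic to $C(L)$ for some $L\in\cK_\mathbf{A}$.
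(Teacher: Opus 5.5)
Your proof is correct and follows the paper's argument essentially step by step. Uniqueness comes from Michalak's theorem (Theorem~\ref{thm_different_C(K)_are_noniso}); for existence you replace each nonmetrizable factor by $I_\mathbf{A}$ via the $C(K\times L)\simeq C(K,C(L))$ facts and Theorem~\ref{thm: Under BA there is one C(K_A) space}, then split into cases by Bessaga--Pe\l czy\'nski/Miljutin and absorb the $c_0$ case with Lemma~\ref{lem: C(I_A) isomorphic to c_0(C(I_A))}. Your detour through an $\omega_1$-dense $B$ in that last case is a worthwhile extra, since the paper applies Lemma~\ref{lem: C(I_A) isomorphic to c_0(C(I_A))} directly to $I_\mathbf{A}$ although $\mathbf{A}$ was only assumed to have cardinality $\omega_1$.
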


\begin{proof}
By Theorem \ref{thm_different_C(K)_are_noniso}, for two distinct spaces $K_1, K_2 \in \cK_\mathbf{A}$, we have $C(K_1) \not\simeq C(K_2)$. 
Thus, it is enough to prove that, for any $m\ge 1$, and any separable compact lines $K_1, \dotsc, K_m$ of weight $ \leq \omega_1$, if at least one of them is nonmetrizable, then there is some $L \in \cK_\mathbf{A}$ such that $C(\prod_{i=1}^m K_i) \simeq C(L)$.

Without loss of generality, assume that for some $1 \leq n \leq m$, we have $w(K_i) = \omega_1 \iff i \leq n$. Then $M = \prod_{i=n+1}^m K_i$ is a metrizable compact space. 

By Facts \ref{fact_C(KxL)_iso_C(C(K),C(L))}, \ref{fact_C(K,X)_iso_C(K,Y)} we have
\[C\left(\prod_{i=1}^m K_i\right) \simeq C\left(\prod_{i=1}^n K_i, C\left(\prod_{i=n+1}^m K_i\right)\right) \simeq C\left(\prod_{i=1}^n K_i, C(M)\right).\]
Using Theorem \ref{thm: Under BA there is one C(K_A) space} we can also see that
\[C\left(\prod_{i=1}^n K_i\right) \simeq  C\left(\prod_{i=1}^{n-1} K_i, C(K_n)\right) \simeq  C\left(\prod_{i=1}^{n-1} K_i, C(I_\mathbf{A})\right) \simeq  C\left(I_\mathbf{A} \times \prod_{i=1}^{n-1} K_i\right),\]
so by continuing this process further we obtain $C(\prod_{i=1}^n K_i) \simeq C((I_\mathbf{A})^n)$.
It follows that
\[C\left(\prod_{i=1}^m K_i\right) \simeq C\big((I_\mathbf{A})^n \times M\big).\]
By the results of Bessaga, Pe\l czy\'nski \cite{BP60} and Miljutin \cite{Mi66} we know that $C(M)$ is isomorphic to $C(M')$, where
\[M' \in \{\{0, 1, \dotsc, n-1\} : n\in \omega\} \cup \{[0, 1]\} \cup \{\omega^{\omega^\alpha}+1 : 0 \leq \alpha < \omega_1\}.\]
By Facts \ref{fact_C(KxL)_iso_C(C(K),C(L))} and \ref{fact_C(K,X)_iso_C(K,Y)} we have
\[C\big((I_\mathbf{A})^n \times M\big) \simeq C\big((I_\mathbf{A})^n, C(M)\big) \simeq C\big((I_\mathbf{A})^n, C(M')\big) \simeq C\big((I_\mathbf{A})^n \times M'\big).\]
Additionally, if $C(M')$ is finite-dimensional (i.e., $M'$ is finite), then we have 
\begin{eqnarray*}
C\big((I_\mathbf{A})^n \times M'\big) &\simeq& C\big((I_\mathbf{A})^{n-1}\times I_\mathbf{A} \times M'\big) \simeq C\big((I_\mathbf{A})^{n-1},C(I_\mathbf{A} \times M')\big)\\ 
&\simeq& C\big((I_\mathbf{A})^{n-1},C(I_\mathbf{A})\big) \simeq C((I_\mathbf{A})^n)
\end{eqnarray*}
due to Theorem \ref{thm: Under BA there is one C(K_A) space} and the fact that the space $I_\mathbf{A} \times M'$ is homeomorphic to a separable compact line. 

If  $C(M')$ is isomorphic to $c_0$ (i.e., $M' = \omega+1$), then 
\begin{eqnarray*}
C((I_\mathbf{A})^n \times M') &\simeq& C((I_\mathbf{A})^{n-1}\times I_\mathbf{A} \times M') \simeq C((I_\mathbf{A})^{n-1},C(I_\mathbf{A} \times M'))\\ 
&\simeq& C((I_\mathbf{A})^{n-1},C(I_\mathbf{A},C(M'))) \simeq C((I_\mathbf{A})^{n-1},C(I_\mathbf{A},c_0))\\ 
&\simeq& C((I_\mathbf{A})^{n-1},C(I_\mathbf{A})) \simeq C((I_\mathbf{A})^n)
\end{eqnarray*}
due to Lemma \ref{lem: C(I_A) isomorphic to c_0(C(I_A))}.
\end{proof}

\section{Final remarks} \label{sec_remarks}
\subsection{Compact spaces of weight \texorpdfstring{$\kappa$}{κ}}

Theorem \ref{many} should be compared with the corollary from the Miljutin, Bessaga and Pełczyński's classification of separable Banach spaces of the form $C(K)$ which says that there are only $\omega_1$ mutually nonisomorphic such spaces regardless of the relation between $\omega_1$ and $2^\omega$ (although the spaces $\ell_p$ for $1<p<\infty$ form a family of cardinality $2^\omega$ consisting of mutually nonisomorphic separable Banach spaces).  
Our method of proving Proposition \ref{many} works only for regular cardinals.
In particular, we do not know the answer to the following:

\begin{question} Suppose that $\kappa$ is a singular cardinal. Is it provable  
without any additional hypothesis that there are $2^\kappa$ mutually nonisomorphic
Banach spaces of the form $C(K)$  (of any form) of density $\kappa$?
\end{question}

Note that the answer to the above questions is positive if we assume the Generalized Continuum Hypothesis (\textsf{GCH}).
Indeed, Kisljakov's classification of spaces of the form $C([0, \alpha])$ for
$\alpha<\kappa^+$ (\cite{Ki75}) shows that without any additional hypothesis we have $\kappa^+$ mutually nonisomorphic Banach spaces of the form $C(K)$ for any uncountable cardinal $\kappa$. 
As \textsf{GCH} means that $2^\kappa=\kappa^+$ for every infinite cardinal $\kappa$, we obtain the positive answer to the above questions under \textsf{GCH}.

Also note that we cannot have more than $2^\kappa$ mutually nonisometric (and so nonisomorphic) Banach spaces of the form $C(K)$.
This follows from the fact that all compact spaces of weight not exceeding $\kappa$ embed homeomorphically into $[0,1]^\kappa$ and this space has at most $2^\kappa$ closed subsets as they all are complements of some unions of elements of the basis. 
This also implies that there are at most $2^\kappa$ mutually nonisometric general Banach spaces of density $\kappa$. 
Indeed, each such space isometrically embeds into some space of the form $C(K)$ of density $\kappa$ and there are only $2^\kappa$ closed subspaces of such a $C(K)$ as they are again complements of unions of some open balls around the points in the dense set of cardinality $\kappa$. 

It should also be noted that for every uncountable cardinal $\kappa$ (including singular cardinals), there are $2^\kappa$ mutually nonhomeomorphic compact spaces of weight $\kappa$. 
This follows from the Stone duality and the existence of many examples of families of cardinalities $2^\kappa$ of mutually nonisomorphic Boolean algebras of cardinality $\kappa$ presented in \cite{Mo89}.
In particular, it is shown in Theorem 1.3 of \cite{Mo89} that for every uncountable $\kappa$ there are $2^\kappa$ mutually nonisomorphic interval Boolean algebras. 
This together with Theorem 15.7 from \cite{Ko89} implies that there are $2^\kappa$ mutually nonhomeomorphic linearly ordered compact spaces for each uncountable cardinal $\kappa$.

Let us also mention that the possibility of using families of pairwise disjoint stationary subsets of $\omega_1$ to build $2^{\omega_1}$
different structures of ``sizes'' $\omega_1$  as in the proof of Theorem \ref{many} has been well known. 
For example, this argument shows that there are many nonhomeomorphic 2-dimensional nonmetrizable manifolds (Examples 6.2 of \cite{Ny84}) or is used in \cite{To81} in the context of continua. 

\subsection{Separable compact lines}

The whole proof of Theorem \ref{thm: Under BA there is one C(K_A) space} relies on Baumgartner's Theorem (Theorem \ref{thm: All omega1-dense sets are isomorphic}). 
In fact, we do not need its full form, but only a weaker version stating that for two $\omega_1$-dense subsets $A, B$ of $(0, 1)$ spaces $C(I_A)$ and $C(I_B)$ are isomorphic. 
This statement may perhaps be consistently proven under weaker assumptions, for example, that every two $\omega_1$-dense subsets $A, B$ of $(0, 1)$ there is an order embedding of $A$ to $B$. 
Many interesting facts about this and similar axioms can be found in \cite[Section 6]{ARS85}.

The following comment is purely theoretical (as it is not known whether even \textsf{BA}$(\omega_2)$ is consistent), but can still be of some value (in particular, it indicates a slightly different way of proving Theorem \ref{thm: Under BA there is one C(K_A) space}).

Under the assumption of \textsf{BA}$(\kappa)$, for $\kappa$ of uncountable cofinality, it is possible to prove the analogue of Theorem \ref{thm: Under BA there is one C(K_A) space} for separable compact lines of weight $\kappa$.
The proof, involving Pełczyński decomposition, would use a counterpart of Lemma \ref{lem: C(I_A) isomorphic to c_0(C(I_A))}, for $\kappa$ instead of $\omega_1$, and the following two facts.

\begin{lemma}[\textsf{BA}$(\kappa)$]
For every separable compact line $L$ of weight $\kappa$ and every $\kappa$-dense set $B \sub (0, 1)$ the space $C(L)$ can be embedded in $C(I_B)$ as a complemented subspace.
\end{lemma}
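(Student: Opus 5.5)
Take a separable compact line $L$ of weight $\kappa$ and a $\kappa$-dense set $B\sub(0,1)$. I will find a closed subset of $I_B$ homeomorphic to $L$, up to a harmless modification, and then use Lemma \ref{lem: decomposition through a closed subspace} to get the complemented embedding.

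\textbf{Step 1: reduce $L$ to the form $I_A$.} By Theorem \ref{thm: C(K_A) sim C(I_B)}, $C(L)\simeq C(I_A)$ for some $A\sub(0,1)$, and $|A|=\kappa$ because weight is an isomorphic invariant of $C(K)$. So it suffices to embed $C(I_A)$ complementably into $C(I_B)$. Strictly, that theorem is stated for uncountable $L$; this is automatic here since $\kappa$ is uncountable.

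\textbf{Step 2: choose an order-embedding that enlarges $A$ to a $\kappa$-dense set.} Set $A'=A\cup D$, where $D\sub(0,1)\sm A$ is a $\kappa$-dense set chosen disjoint from $A$. This is possible because every interval contains $2^\omega\ge\kappa$ reals, so $D$ can be built by transfinite recursion. Then $A'$ is $\kappa$-dense in $(0,1)$. By $\textsf{BA}(\kappa)$ there is an order isomorphism $A'\to B$. Extending it via Dedekind cuts, as in Lemma \ref{lem: Extending order isomorphism} (which is valid for $\kappa$-dense sets by the same argument), gives an order automorphism $\hat f$ of $[0,1]$ with $\hat f[A']=B$. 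Exactly as in Corollary \ref{cor: A simeq B, then I_A simeq I_B}, $I_{A'}$ is homeomorphic to $I_B$.

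\textbf{Step 3: realize $C(I_A)$ inside $C(I_{A'})$.} The obvious quotient map $I_{A'}\to I_A$ collapses the pairs $(d,0),(d,1)$ for $d\in D$. This gives an isometric embedding of $C(I_A)$ into $C(I_{A'})$, but complementation is not clear from it. Instead I want $I_A$, or a space with the same $C$, as a closed subspace of $I_{A'}$. The idea is to use a closed set $F\sub I_{A'}$ that is order isomorphic to a line $K_{A''}$ with $C(K_{A''})\simeq C(I_A)$.

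Concretely, I will pick a Cantor set $C\sub[0,1]$ with $C\cap D=\emptyset$ and $|C\cap A|=\kappa$, arranged so that $A\cap C$ is $\kappa$-dense in $C$. I will then choose $D$ to avoid $C$; this is possible since $[0,1]\sm C$ is open and dense and meets every interval in $2^\omega$ points. With these choices, $C_{A\cap C}$ is a closed subspace of $I_{A'}$. For the choice of $C$ itself, one route is the analogue of the reduction in Lemma \ref{lem: reduction to omega_1-dense sets}. Another is to produce $C$ directly from the order structure of $I_A$ via Proposition \ref{prop_zero_dim_no_isolated_points}: the perfect zero-dimensional line $I_{A}$ modulo its countable part has the form $C_{B'}$, which lets one transport it into a Cantor set placed inside $I_{A'}$.

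\textbf{Step 4: conclude.} Lemma \ref{lem: decomposition through a closed subspace} gives $C(F)\,|\,C(I_{A'})\simeq C(I_B)$. Together with $C(L)\simeq C(F)$, this yields the lemma.

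\textbf{Main obstacle.} The hard part is Step 3: making the closed copy inside $I_{A'}$ have exactly the right function space. A closed subset of $I_{A'}$ inherits only the doubled points lying in the chosen closed set, so the closed set must be chosen disjoint from $D$ while containing a $\kappa$-dense portion of $A$. I will try first to do this by placing $D$ inside the complementary gaps of a Cantor set carrying the image of $A$. This requires re-choosing the identification of $I_A$ with a zero-dimensional line $C_{B'}$, using Corollary \ref{cor: C(L) and C(L^(alpha)} and Proposition \ref{prop_zero_dim_no_isolated_points}.
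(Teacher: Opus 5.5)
Your Steps 3--4 contain a genuine gap: nothing you write justifies $C(L)\simeq C(F)$. With $A$ left where Step 1 puts it and $C$ chosen so that $A\cap C$ is $\kappa$-dense in $C$, the set $F=C_{A\cap C}$ is just the preimage of $C$ in $I_A$. So Lemma~\ref{lem: decomposition through a closed subspace} gives only $C(F)\,|\,C(I_A)$, which is the wrong direction. In general $A$ has pieces where it is nowhere $\kappa$-dense: take $\kappa>\omega_1$ and $A$ that is $\omega_1$-dense in $(0,\tfrac12)$ and $\kappa$-dense in $(\tfrac12,1)$. Any such $C$ must miss these pieces, and the discarded part is a nonmetrizable line, so Lemma~\ref{lem: C(KA) and C(KA) x C(M) are isomorphic} cannot absorb it. Showing that it is absorbed is essentially the conclusion of the whole Pe\l czy\'nski-decomposition programme this lemma is meant to serve, so it cannot be assumed here. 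Your alternative route rests on a false claim. $I_A$ is perfect, but it is zero-dimensional only when $A$ is dense in $(0,1)$. To reach a zero-dimensional line you must first discard the part of $I_A$ lying over the open set where $A$ is small, not a countable set of points. Moreover, insisting that $A\cap C$ be $\kappa$-dense in $C$ is exactly what breaks the reduction of Lemma~\ref{lem: reduction to omega_1-dense sets} for $\kappa>\omega_1$. If $U$ is defined using $|(a,b)\cap A|<\kappa$, then $A\cap U$ can be uncountable, $C_0(I_A\|M_{A\setminus U})$ is nonseparable, and the metrizable-summand argument is lost.

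The missing idea, which is the paper's argument, is that you should embed a copy of the \emph{whole} line; you need no $\kappa$-density inside the Cantor set at all. Run the reduction with $U=\bigcup\{(a,b): a,b\in\mathbb{Q},\ |(a,b)\cap A|\le\omega\}$. Then $A\cap U$ is countable, so $C(I_A)\simeq C(M_{A\setminus U})$ by Lemmas~\ref{lem: decomposition through a closed subspace} and~\ref{lem: C(KA) and C(KA) x C(M) are isomorphic}. The perfect kernel $L'$ of $M_{A\setminus U}$ satisfies $C(L')\simeq C(L)$, and it is zero-dimensional because every nonempty interval of it contains uncountably many gaps. By Proposition~\ref{prop_zero_dim_no_isolated_points}, $L'$ is order isomorphic to $C_{A_0}$ with $A_0$ contained in the ternary Cantor set $C$ and $|A_0|=\kappa$. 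Only now choose $D$, namely $D=B\setminus C$; this also removes the circularity in your order of choosing $C$ and $D$. Since $C$ is nowhere dense, $A_0\cup D$ is $\kappa$-dense in $(0,1)$, so \textsf{BA}$(\kappa)$ and the Dedekind-cut extension give $I_{A_0\cup D}\cong I_B$. Also, $C_{A_0}$ is the preimage of $C$ under the projection $I_{A_0\cup D}\to[0,1]$, hence closed. Lemma~\ref{lem: decomposition through a closed subspace} then yields $C(L)\simeq C(C_{A_0})\,|\,C(I_B)$.
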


\begin{proof}
By repeating the proof of Lemma  \ref{lem: reduction to omega_1-dense sets}, the space $C(L)$ is isomorphic to $C(L') \oplus C(N)$ for a $0$-dimensional perfect separable compact line $L'$ of weight $\kappa$ and some metrizable compact space $N$.
But by Lemma \ref{lem: C(KA) and C(KA) x C(M) are isomorphic}, we have 
\[C(L') \simeq C(L') \oplus C(N) \simeq C(L).\]
By Proposition \ref{prop_zero_dim_no_isolated_points}, the space $L'$ is homeomorphic to $C_A$ for some set $A \sub C$ of cardinality $\kappa$. 
Note that for $\kappa > \omega_1$ the analogue of Corollary \ref{cor: A simeq B, then I_A simeq I_B} also holds.
The set $(B\sm C) \cup A$ is still $\kappa$-dense in $(0, 1)$, so the spaces $I_B$ and $I_{(B\sm C) \cup A}$ are homeomorphic.
It is standard to check that $C_A$ is a closed subset of $I_{(B\sm C) \cup A}$, so by Lemma \ref{lem: decomposition through a closed subspace} we have
\[C(C_A) | C(I_{(B\sm C) \cup A}),\]
which ends the proof, as $C(L) \simeq C(L') \simeq C(C_A)$ and $C(I_{(B\sm C) \cup A}) \simeq C(I_B)$.
\end{proof}

\begin{lemma}[\textsf{BA}$(\kappa)$] If $\kappa$ has uncountable cofinality then,
for every separable compact line $L$ of weight $\kappa$ and every $\kappa$-dense set $B \sub (0, 1)$, the space $C(I_B)$ can be embedded in $C(L)$ as a complemented subspace.
\end{lemma}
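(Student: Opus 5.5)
We need $C(I_B)$ to be a complemented subspace of $C(L)$ for every separable compact line $L$ of weight $\kappa$, where $\kappa$ has uncountable cofinality. The plan is to find inside $L$ a closed subset of the form $M_D$ with $D$ $\kappa$-dense, and then use \textsf{BA}$(\kappa)$ together with Lemma \ref{lem: decomposition through a closed subspace}.

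\textbf{Step 1: reduce to $L=I_A$.} By Theorem \ref{thm: C(K_A) sim C(I_B)}, $C(L)\simeq C(I_A)$ for some $A\subseteq(0,1)$. Here $|A|=\kappa$, since the weight is preserved under isomorphism of the function spaces. So it suffices to show $C(I_B)\,|\,C(I_A)$.

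\textbf{Step 2: find an interval where $A$ is $\kappa$-dense.} Consider the countably many rational intervals $(p,q)$ with $|(p,q)\cap A|<\kappa$, and let $U$ be their union.
\begin{itemize}
\item Since $\kappa$ has uncountable cofinality, a countable union of sets of size $<\kappa$ has size $<\kappa$, so $|A\cap U|<\kappa$. This is the key use of the cofinality hypothesis.
\item Hence $A\setminus U\neq\emptyset$, and $A\setminus U$ is $\kappa$-dense in the closed set $M=I\setminus U$, in the sense that every open interval meeting $A\setminus U$ meets it in $\kappa$ points.
\item $M$ need not be an interval, but it is uncountable. I would pick a point $a\in A\setminus U$ that is a two-sided accumulation point of $A\setminus U$. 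This is possible because all but countably many points of a subset of $\mathbb R$ are two-sided accumulation points of it.
\end{itemize}

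\textbf{Step 3: build the closed copy.} It is cleanest to imitate the proof of the first lemma of this subsection.
\begin{itemize}
\item Choose a Cantor set $C'\subseteq M$ inside $M$ near $a$, in a manner that keeps $D=C'\cap(A\setminus U)$ $\kappa$-dense in $C'$. The easiest way to do this is to build $C'$ inside the perfect kernel of $M$, splitting at points of $A\setminus U$.
\item Then $C'_D$ is a closed subset of $I_A$. By Lemma \ref{lem: decomposition through a closed subspace}, $C(C'_D)\,|\,C(I_A)$.
\item Since $D$ is $\kappa$-dense in the Cantor set, Proposition \ref{prop_no_isolated_points} transfers $C'_D$ to some $I_{D'}$ with $D'$ $\kappa$-dense in $(0,1)$. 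The gaps of $C'$ become points of $D'$, and there are only countably many of them, which is harmless.
\item By \textsf{BA}$(\kappa)$ and the $\kappa$-analogue of Corollary \ref{cor: A simeq B, then I_A simeq I_B}, $I_{D'}$ is homeomorphic to $I_B$.
\item Therefore $C(I_B)\simeq C(C'_D)\,|\,C(I_A)\simeq C(L)$.
\end{itemize}

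\textbf{Main obstacle.} The hardest step is Step 3: producing a perfect closed set in which $A$ stays $\kappa$-dense. I would first try a simpler route that avoids Cantor sets. Take $M_{A\setminus U}$ itself, which is closed in $I_A$, and apply to it the argument from Lemma \ref{lem: reduction to omega_1-dense sets}:
\begin{itemize}
\item Pass to the perfect kernel $L^{(\alpha)}$ of $L=M_{A\setminus U}$; only countably many points are removed.
\item Use Proposition \ref{prop_no_isolated_points} to write $L^{(\alpha)}\cong I_{B'}$ with $B'$ $\kappa$-dense.
\item $L^{(\alpha)}$ is still closed in $I_A$, so Lemma \ref{lem: decomposition through a closed subspace} directly gives $C(I_{B'})\,|\,C(I_A)$.
\item Finally, \textsf{BA}$(\kappa)$ yields $I_{B'}\cong I_B$.
\end{itemize}
The density check survives exactly as in Lemma \ref{lem: reduction to omega_1-dense sets}: every nonempty interval of $L^{(\alpha)}$ contains $\kappa$ gaps, because each such interval meets $A\setminus U$ in $\kappa$ points and removing countably many points does not change this.
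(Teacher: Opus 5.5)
Your closing ``simpler route'' is correct, and it is exactly the paper's proof. The steps match: reduce to $L=I_A$; remove the union $U$ of rational intervals containing fewer than $\kappa$ points of $A$, using $\mathrm{cf}(\kappa)>\omega$ to get $|A\cap U|<\kappa$; take the perfect kernel of the closed subspace $M_{A\setminus U}$; identify it with some $I_{B'}$ via Proposition \ref{prop_no_isolated_points}; check by counting gaps that $B'$ is $\kappa$-dense; and finish with \textsf{BA}$(\kappa)$ and Lemma \ref{lem: decomposition through a closed subspace}.

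One precision for the gap count. Use the stronger density statement that the definition of $U$ gives immediately: every open interval meeting $M\cap(0,1)$ contains $\kappa$ points of $A\setminus U$. Your phrasing, ``every interval meeting $A\setminus U$'', is weaker than what you need. Also note that an interval of the kernel is infinite, so its projection contains points $x_1<x_2<x_3$ of $M$, and $(x_1,x_3)$ then supplies the $\kappa$ gaps. This step has to be done in the kernel, because an open interval of $M_{A\setminus U}$ itself can be a single isolated point $(x,1)$.

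You should discard the Cantor-set version of Step 3, which has a genuine gap. A Cantor scheme built by ``splitting at points of $A\setminus U$'' only puts countably many points of $A$ into $C'$, namely the splitting points. Nothing in the construction forces $C'\cap A$ to have $\kappa$ points in every relatively open piece of $C'$.

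Without \textsf{BA}$(\kappa)$ such a $C'$ need not exist at all. A dense Luzin set (e.g.\ under \textsf{CH}) meets every nowhere dense set, hence every Cantor set, in a countable set. Under \textsf{BA}$(\kappa)$ a suitable $C'$ can be produced: transport a model set that is $\kappa$-dense in some Cantor set by an order isomorphism extended to a homeomorphism, and treat separately the case where $M$ has empty interior. But that is an additional use of the axiom and only a detour to the same conclusion.

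Moreover, if points $x\in A$ end up as left endpoints of removed intervals of $C'$, the points $(x,1)$ are isolated in $C'_D$. Then Proposition \ref{prop_no_isolated_points} does not apply without further trimming.
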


\begin{proof} By Lemma \ref{lem: decomposition through a closed subspace}
it is enough to show that $L$ contains a topological copy of $I_B$. To achieve this we  need again to use some ideas from the proof of Lemma  \ref{lem: reduction to omega_1-dense sets}. 

As usual, we can assume that $L = I_A$, where $A$ is a subset of $(0,1)$ of size $\kappa$. Let 
\[U = \bigcup \{(a, b) \sub (0,1): a, b \in \bQ \mbox{ and } |(a, b) \cap A| < \kappa\}.\]
Clearly, $|U\cap A| < \kappa$ (by $cf(\kappa) >\omega$) and $A\sm U$ is $\kappa$-dense in a compact set $M = I \sm U$. We consider a separable compact line $K = M_{A\sm U}\subseteq L$. Let $\alpha = \min\{\beta: K^{(\beta+1)} = K^{(\beta)}\}$. In the same way as in the proof of Lemma \ref{lem: decomposition through a closed subspace} we can check that the subspace $K^{(\alpha)}$ of $L$ is homeomorphic to $I_B$.
\end{proof}

\bibliographystyle{amsalpha}
\bibliography{refs}

\end{document}